\newtheorem{thm}{Theorem}[section]
\newtheorem{cor}[thm]{Corollary}
\newtheorem{conj}[thm]{Conjecture}
\newtheorem{lem}[thm]{Lemma}
\newtheorem{prop}[thm]{Proposition}
\newtheorem{rem}[thm]{Remark}
\theoremstyle{definition}
\newtheorem{defn}[thm]{Definition}
\numberwithin{equation}{section}
\newcommand{\eb}{\text{\bf{e}}}
\newcommand{\R}{\mathbb R}
\newcommand{\e}{\varepsilon}
\newcommand{\ov}{\overline}
\newcommand{\p}{\partial}
\newcommand{\dist}{\mbox{dist}\,}
\newcommand{\diam}{\mbox{diam}\,}
\newcommand{\trace}{\mbox{trace}\,}
\newcommand{\comment}[1]{}
\def\h{\hspace*{.24in}}
\newenvironment{myindentpar}[1]%
{\begin{list}{}%
         {\setlength{\leftmargin}{#1}}%
         \item[]%
}
{\end{list}}
\begin{document} 

\title[ Monge-Amp\`ere eigenvalue problem on general bounded convex domains]{The eigenvalue problem for the Monge-Amp\`ere operator on general bounded convex domains 
}
\author{Nam Q. Le}
\address{Department of Mathematics, Indiana University, 831 E 3rd St,
Bloomington, IN 47405, USA}
\email{nqle@indiana.edu}
\thanks{The research of the author was supported in part by the National Science Foundation under grant DMS-1500400.}

\subjclass[2000]{47A75, 49R50, 35J70, 35J96.}
\keywords{Eigenvalue problem, Monge-Amp\`ere equation, degenerate equations, Brunn-Minkowski inequality, isoperimetric inequality}

% ----------------------------------------------------------------
\begin{abstract}
 
In this paper, we study the eigenvalue problem for the Monge-Amp\`ere operator on general bounded convex domains. We prove the existence, uniqueness and variational characterization of the Monge-Amp\`ere eigenvalue.
The convex Monge-Amp\`ere eigenfunctions are shown to be unique up to positive multiplicative constants. Our results are 
the singular counterpart of previous results by P-L. Lions and K. Tso in the smooth, uniformly convex setting. Moreover, we prove the stability of the Monge-Amp\`ere eigenvalue with respect to the 
Hausdorff convergence of the domains. This stability property makes it possible to investigate 
the Brunn-Minkowski,  isoperimetric 
and reverse isoperimetric inequalities for the Monge-Amp\`ere eigenvalue in their full generality. 
We also discuss related existence and regularity results for a class of degenerate Monge-Amp\`ere equations.

\end{abstract}
\maketitle

\section{Introduction and  statement of the main result}
\subsection{The Monge-Amp\`ere eigenvalue problem}
The eigenvalue problem for the Monge-Amp\`ere operator $\det D^2 u$ on smooth, open, bounded and uniformly convex domains $\Omega$ in $\R^n$ ($n\geq 2$) was 
first investigated by Lions \cite{Ls}. He showed that there exist a unique positive constant $\lambda=\lambda(\Omega)$ and a unique (up to positive multiplicative constants) nonzero 
convex function $u\in C^{1,1}(\overline{\Omega})\cap C^{\infty}(\Omega)$ solving the eigenvalue problem for the Monge-Amp\`ere operator $\det D^2 u$: 
\begin{equation}\label{EP}
\left\{
 \begin{alignedat}{2}
   \det D^2 u~& = \lambda|u|^n~&&\text{in} ~  \Omega, \\\
u &= 0~&&\text{on}~ \p\Omega.
 \end{alignedat} 
  \right.
  \end{equation}
Lions \cite{Ls} also gave an interesting stochastic interpretation for $\lambda(\Omega)$. A 
variational characterization of $\lambda$ was found by Tso \cite{Tso} who discovered that for sufficiently smooth, open, bounded, and uniformly convex domains $\Omega$
\begin{equation}
 \lambda(\Omega)=\inf\left\{ \frac{\int_{\Omega} (-u)\det D^2 u~dx}{\int_{\Omega}(-u)^{n+1}~dx}: u\in C^{0,1}(\overline{\Omega})\cap C^2(\Omega),~u~\text{is nonzero, convex in } 
 \Omega,~u=0~\text{on}~\p\Omega\right\}.
 \label{lamT}
\end{equation}
All of the above features of $\lambda$ suggest the well-known properties of the first eigenvalue of the Laplace operator. As such, $\lambda(\Omega)$ is called 
the Monge-Amp\`ere eigenvalue of $\Omega$. The nonzero convex functions $u$ solving (\ref{EP}) are called the Monge-Amp\`ere eigenfunctions. Despite the degeneracy of the right hand side
of (\ref{EP}) near the boundary, the Monge-Amp\`ere eigenfunctions also enjoy the same global smoothness feature as the first eigenfunctions of the Laplace operator.
The fact that $u\in C^{\infty}(\overline{\Omega})$ when $\Omega$ is smooth and uniformly convex was recently proved by the author and Savin \cite{LS}.

A full generalization of the results by Lions and Tso to the $k$-Hessian operator ($1\leq k\leq n$) was carried out by Wang \cite{W}. 
Recall that the Monge-Amp\`ere operator is the $n$-Hessian operator while the Laplace operator is
the $1$-Hessian operator. Wang studied
the first eigenvalue together with its variational characterization for the $k$-Hessian operator. 

Using the variational characterization (\ref{lamT}) of the Monge-Amp\`ere eigenvalue, Salani \cite{Sal} proved a Brunn-Minkowski inequality
for $\lambda(\cdot)$ by showing that $\lambda^{-\frac{1}{2n}}(\cdot)$ (which is positively homogeneous of degree $1$) is concave in the class of 
$C^2_{+}$ domains in $\R^n$ endowed with the Minkowski addition. Here, a domain $\Omega$ in $\R^n$ belongs to the class $C^2_{+}$ if it is an open bounded uniformly convex set
with $C^2$ boundary. More precisely, Salani proved that if $\Omega_0$ and $\Omega_1$ are $C^{2}_{+}$ domains in $\R^n$ 
%and $\alpha\in [0, 1]$ 
then
\begin{equation}
 \lambda(\Omega_{\alpha})^{-\frac{1}{2n}}\geq (1-\alpha) \lambda (\Omega_0)^{-\frac{1}{2n}} + \alpha \lambda(\Omega_1)^{-\frac{1}{2n}}~\text{for all}~\alpha\in[0, 1],
 \label{lamBM}
\end{equation}
where $\Omega_{\alpha}$ denotes the Minkowski linear combination of $\Omega_0$ and $\Omega_1$:
\begin{equation}
\label{Min_lin}
 \Omega_{\alpha}= (1-\alpha)\Omega_0 + \alpha \Omega_1=\left\{(1-\alpha)x_0 + \alpha x_1: x_0\in\Omega_0, x_1\in\Omega_1\right\}.
\end{equation}

Recall that the original form of the Brunn-Minkowski inequality (see, for example, \cite[Theorem 7.1.1]{Sch}) involves volumes of convex bodies, that is, 
compact convex sets with non-empty interior. It states that for convex
bodies $K_0$ and $K_1$ in $\R^n$, we have for all $\alpha\in [0, 1]$
\begin{equation}|K_{\alpha}|^{\frac{1}{n}}\geq (1-\alpha) |K_0|^{\frac{1}{n}} + \alpha |K_1|^{\frac{1}{n}}.
 \label{volBM}
\end{equation}
Note that only convexity is required for $K_0$ and $K_1$ in (\ref{volBM}) and no further regularities on $\p K_0$ and $\p K_1$ are necessary. 

It is thus a natural question to ask if (\ref{lamBM}) also holds for general open bounded convex domains. This question was raised by Salani \cite[p. 85]{Sal}. 
In order to answer this question, we need to develop a theory of the Monge-Amp\`ere eigenvalue for general open bounded convex domains. This is the main goal of the present work. 
We prove the existence, uniqueness and variational characterization of the Monge-Amp\`ere eigenvalue.
The convex Monge-Amp\`ere eigenfunctions are shown to be unique up to positive multiplicative constants. Our results are
the singular counterpart of previous results by  Lions and Tso mentioned above in the smooth, uniformly convex setting. 
Moreover, we prove the stability of the Monge-Amp\`ere eigenvalue with respect to the 
Hausdorff convergence of the domains.  

Unless otherwise indicated, bounded open convex domains $\Omega\subset\R^n$ are throughout assumed to 
have non-empty interior. Our main result states as follows.

\begin{thm} [The Monge-Amp\`ere eigenvalue problem]
\label{lam_thm}
 Let $\Omega$ be a bounded open convex domain in $\R^n$. Define the constant $\lambda=\lambda[\Omega]$ via the variational formula:
\begin{equation}
 \lambda[\Omega] =\inf\left\{ \frac{\int_{\Omega} (-u)\det D^2 u~ dx }{\int_{\Omega}(-u)^{n+1}~ dx}: u\in C(\overline{\Omega}),
 ~u~\text{is convex, nonzero in } \Omega,~ u=0~\text{on}~\p\Omega\right\}.
\label{lam_def}
 \end{equation}
 Then, 
 \begin{myindentpar}{1cm}
 (i) We have the estimates \begin{equation}c(n) |\Omega|^{-2}\leq \lambda[\Omega]\leq C(n) |\Omega|^{-2}
  \label{lam_est}
 \end{equation}
 for some geometric constants $c(n), C(n)$ depending only on the dimension $n$.\\
 (ii) There exists a 
 nonzero convex solution $u\in C^{0,\beta}(\overline{\Omega})\cap C^{\infty}(\Omega)$ for all $\beta\in (0, 1)$ to the eigenvalue problem
 \begin{equation}
 \left\{
 \begin{alignedat}{2}
   \det D^{2} u~&=\lambda |u|^{n} \h~&&\text{in} ~\Omega, \\\
u &=0\h~&&\text{on}~\p \Omega.
 \end{alignedat}
 \right.
 \label{EVP_eq}
\end{equation}
Thus the infimum in (\ref{lam_def}) is achieved.\\
(iii) The eigenvalue-eigenfunction pair $(\lambda, u)$ to (\ref{EVP_eq}) is unique in the following sense: If the pair $(\Lambda, v)$ 
satisfies $\det D^2 v =\Lambda |v|^n$ in $\Omega$ where $\Lambda>0$ is a positive constant and 
$v\in C(\overline{\Omega})$ is convex, nonzero
with $v=0$ on $\p\Omega$, then $\Lambda=\lambda$ and $v=mu$ for some positive constant $m$.\\
(iv) $\lambda[\cdot]$ is stable with respect to the Hausdorff convergence of the domains: If $\{\Omega_m\}_{m=1}^{\infty}\subset\R^n$ is a sequence of open bounded convex domains that 
converges in the Hausdorff distance to an open bounded convex domain $\Omega$, then $\lim_{m\rightarrow \infty}\lambda[\Omega_m]=\lambda[\Omega].$
\end{myindentpar}
\end{thm}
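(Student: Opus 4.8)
The plan is to establish the four assertions in the order stated, with part~(iii) serving as the logical keystone: parts~(i) and~(ii) are proved essentially on their own, while part~(iv) is then deduced from~(i)--(iii). For part~(i) the starting point is that the quotient in \eqref{lam_def} is well behaved under the affine group: if $A\in GL(n,\R)$ has $\det A>0$, then $u\mapsto u\circ A^{-1}$ carries admissible functions on $\Omega$ to admissible functions on $A\Omega$ and multiplies the Rayleigh quotient by $(\det A)^{-2}$, so that $\lambda[\Omega]\,\abs{\Omega}^{2}$ is an affine invariant (and it is plainly translation invariant). By John's ellipsoid theorem it therefore suffices to bound $\lambda[\Omega]$ between two positive dimensional constants when $B_{1}\subseteq\Omega\subseteq B_{n}$. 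For such normalized $\Omega$ the lower bound follows from Aleksandrov's maximum principle together with the elementary cone estimate for the Monge-Amp\`ere measure, which give $\int_{\Omega}(-u)\det D^{2}u\,d\ux\geq c_{n}(\diam\Omega)^{-n}\norm{u}_{L^{\infty}(\Omega)}^{n+1}$ for every admissible $u$, to be set against the trivial bound $\int_{\Omega}(-u)^{n+1}\,d\ux\leq\norm{u}_{L^{\infty}(\Omega)}^{n+1}\abs{\Omega}$; the upper bound follows by testing \eqref{lam_def} with the Aleksandrov solution of $\det D^{2}w=\chi_{\Omega}$ and using the same two estimates in the opposite roles. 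Undoing the normalization turns these into \eqref{lam_est}.

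For part~(ii) I would build the eigenfunction by subcritical approximation. For $0<q<n$ the problem $\det D^{2}u_{q}=(-u_{q})^{q}$ in $\Omega$, $u_{q}=0$ on $\p\Omega$, has a unique nonzero convex Aleksandrov solution, because the map sending $g\geq 0$ to the negative of the Aleksandrov solution of $\det D^{2}u=g$ is order preserving and positively homogeneous of degree $q/n<1$, so a Krein--Rutman/Brezis--Oswald-type fixed point argument applies. Normalizing $\norm{u_{q}}_{L^{\infty}(\Omega)}=1$ one has $0\leq\det D^{2}u_{q}=(-u_{q})^{q}\leq 1$, so comparison with the Aleksandrov solution of $\det D^{2}w=\chi_{B}$ on a fixed ball $B\supset\ov\Omega$ gives an $\Omega$-only modulus of continuity up to $\p\Omega$, while the cone estimate gives $\int_{\Omega}(-u_{q})^{n+1}\,d\ux\geq c>0$ uniformly in $q$. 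Along a subsequence $u_{q}\to u$ uniformly on $\ov\Omega$, and the weak continuity of the Monge-Amp\`ere measure under uniform convergence of convex functions yields $\det D^{2}u=\lambda(-u)^{n}$ in $\Omega$, $u=0$ on $\p\Omega$, $u\not\equiv 0$, with $\lambda=\lim\lambda_{q}$, $\lambda_{q}:=\int_{\Omega}(-u_{q})^{q+1}\,d\ux\big/\int_{\Omega}(-u_{q})^{n+1}\,d\ux$. Since each $u_{q}$, hence $u$, is admissible in \eqref{lam_def} we get $\lambda_{q}\geq\lambda[\Omega]$; a matching bound $\limsup_{q\uparrow n}\lambda_{q}\leq\lambda[\Omega]$, obtained by inserting a near-optimal competitor for \eqref{lam_def} into the subcritical quotients, forces $\lambda=\lambda[\Omega]$, so the infimum is attained. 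For the regularity: $u\in C(\ov\Omega)$ by the barrier above, and since $\det D^{2}u=\lambda(-u)^{n}$ does not merely satisfy $0\leq\det D^{2}u\leq\lambda\norm{u}_{L^{\infty}(\Omega)}^{n}$ but in fact \emph{vanishes} on $\p\Omega$, reinserting a bound $\det D^{2}u\leq C\,\dist(\ux,\p\Omega)^{n\alpha}$ into the boundary-barrier construction bootstraps the H\"older exponent from an initial $\alpha=\alpha(n)>0$ up to any $\beta\in(0,1)$; thus $u\in C^{0,\beta}(\ov\Omega)$. One also shows $u<0$ throughout $\Omega$; together with Caffarelli's interior regularity for $\det D^{2}u=\lambda(-u)^{n}>0$ this gives strict convexity and, via Schauder bootstrapping, $u\in C^{\infty}(\Omega)$.

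Part~(iii) is the step I expect to be the main obstacle. Let $u$ be the eigenfunction of part~(ii), so its eigenvalue is $\lambda=\lambda[\Omega]$, and let $(\Lambda,v)$ be any eigenpair. Using $v$ as a competitor in \eqref{lam_def} gives $\Lambda=\int_{\Omega}(-v)\det D^{2}v\,d\ux\big/\int_{\Omega}(-v)^{n+1}\,d\ux\geq\lambda$ at once; the reverse inequality and the rigidity are obtained by a maximum-principle comparison in the spirit of Lions' smooth case. First, exactly as in~(ii), $v$ is strictly convex, $C^{\infty}$ and strictly negative in $\Omega$. The crux is to show that $u$ and $v$ vanish at \emph{comparable rates} on $\p\Omega$---so that $0<\sigma_{*}:=\inf_{\Omega}(v/u)\leq\sigma^{*}:=\sup_{\Omega}(v/u)<\infty$---and that these extremal values are attained at interior points $\ux_{*},\ux^{*}\in\Omega$; since on a general bounded convex domain the eigenfunctions are only $C^{0,\beta}$ up to $\p\Omega$, this requires carefully constructed barriers together with a Hopf-type lemma suited to the degenerate boundary behaviour, and it is the hardest point in the theorem. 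Granting it: at $\ux_{*}$ the function $\sigma_{*}u-v\geq 0$ has an interior minimum, so $D^{2}v(\ux_{*})\leq\sigma_{*}D^{2}u(\ux_{*})$ and hence $\Lambda(-v(\ux_{*}))^{n}=\det D^{2}v(\ux_{*})\leq\sigma_{*}^{n}\det D^{2}u(\ux_{*})=\lambda\sigma_{*}^{n}(-u(\ux_{*}))^{n}=\lambda(-v(\ux_{*}))^{n}$, which forces $\Lambda\leq\lambda$, so $\Lambda=\lambda$. With $\Lambda=\lambda$, the inequality $v\geq\sigma^{*}u$ gives $\det D^{2}v=\lambda(-v)^{n}\leq\lambda(-\sigma^{*}u)^{n}=\det D^{2}(\sigma^{*}u)$ throughout $\Omega$; linearizing via the concavity of $M\mapsto(\det M)^{1/n}$ at $D^{2}v$ turns this into $\trace\!\big(\tilde A\,D^{2}(v-\sigma^{*}u)\big)\leq 0$ for a positive definite, locally uniformly elliptic coefficient field $\tilde A$ on $\Omega$, and since $v-\sigma^{*}u\geq 0$ vanishes at the interior point $\ux^{*}$, the strong minimum principle yields $v\equiv\sigma^{*}u$.

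For part~(iv), suppose $\Omega_{m}\to\Omega$ in the Hausdorff distance. For convex bodies this entails $\abs{\Omega_{m}}\to\abs{\Omega}>0$, so by part~(i) the numbers $\lambda[\Omega_{m}]$ are bounded and bounded away from $0$. Let $\lambda'$ be any subsequential limit and let $u_{m}$ be the eigenfunction of $\Omega_{m}$ from~(ii), normalized by $\norm{u_{m}}_{L^{\infty}(\Omega_{m})}=1$, so $0\leq\det D^{2}u_{m}=\lambda[\Omega_{m}](-u_{m})^{n}\leq C$. Uniform boundary barriers---valid for large $m$ since $\Omega_{m}$ is eventually trapped between small dilates of the convex set $\Omega$---provide an $m$-independent modulus of continuity, so along a further subsequence $u_{m}\to u$ locally uniformly in $\Omega$ and uniformly near $\p\Omega$, with $u$ convex and $u=0$ on $\p\Omega$; the cone estimate gives $\int(-u_{m})^{n+1}\,d\ux\geq c>0$, so $u\not\equiv 0$; and weak continuity of the Monge-Amp\`ere measure gives $\det D^{2}u=\lambda'(-u)^{n}$ in $\Omega$. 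Thus $(\lambda',u)$ is an eigenpair for $\Omega$, whence $\lambda'=\lambda[\Omega]$ by part~(iii). Since every subsequential limit of $\lambda[\Omega_{m}]$ equals $\lambda[\Omega]$, the whole sequence converges, which is exactly assertion~(iv).
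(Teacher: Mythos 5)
Your treatment of parts (i), (ii) and (iv) is essentially sound and follows the same structural lines as the paper: John normalization plus Aleksandrov's maximum principle and an explicit comparison solution for (i); subcritical approximation $\det D^2 u_q=(-u_q)^q$ with $q\nearrow n$, uniform $L^\infty$ bounds, and weak continuity of the Monge-Amp\`ere measure for (ii); and a compactness-plus-uniqueness argument for (iv). (The paper constructs the subcritical solutions by Hausdorff approximation of $\Omega$ via smooth domains and Tso's theorem rather than a Krein--Rutman fixed point, and the H\"older bootstrap uses the explicit barrier $\phi_\alpha(x)=x_n^\alpha(|x'|^2-C_\alpha)$ together with the linearized operator $U^{ij}\partial_{ij}$ to raise the exponent by $2/n$ per step; your sketch is consistent in spirit.)

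The genuine gap is in part (iii), and it is exactly where you flag it. Your argument needs the inequality $0<\sigma_*\le v/u\le\sigma^*<\infty$ on all of $\Omega$ \emph{together with} the assertion that both extremal values are attained at interior points, so that a pointwise second-derivative comparison can be made. On a smooth uniformly convex domain both eigenfunctions have nonvanishing gradient on $\partial\Omega$ (they are globally Lipschitz, even smooth), and a Hopf lemma makes the ratio controllable up to the boundary; that is precisely what Lions and Wang exploit. But on a general bounded convex domain the eigenfunctions are only $C^{0,\beta}(\overline\Omega)$ for $\beta<1$, the gradient can blow up near conical or flat boundary portions, and there is no Hopf-type lemma available for this degenerate boundary behaviour. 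You say ``Granting it,'' which means the hardest step is assumed rather than proved. The paper sidesteps the pointwise comparison entirely and replaces it with an integral identity: Proposition 1.5 (nonlinear integration by parts) states that, under mild integrability hypotheses,
\begin{equation*}
\int_\Omega |u|\,\det D^2 v\,dx \;\ge\; \int_\Omega |v|\,(\det D^2 u)^{1/n}\,(\det D^2 v)^{(n-1)/n}\,dx .
\end{equation*}
Applying this to $(u,v)$ and to $(v,u)$ gives $\Lambda\ge\lambda$ and $\lambda\ge\Lambda$ at once, with no boundary Hopf lemma. For uniqueness of eigenfunctions, the paper applies the same identity to $(u+v,\,v)$, combines it with the superadditivity $[\det(A+B)]^{1/n}\ge(\det A)^{1/n}+(\det B)^{1/n}$ (equality iff $A$ and $B$ are proportional) to force $D^2 u(x)=C(x)D^2 v(x)$ pointwise in $\Omega$, and then runs an elementary ODE argument along rays through an interior point to conclude $u=cv$; the only boundary input needed is the integrability $\int_\Omega \Delta u\,|v|^{n-1}\,dx<\infty$, which follows from the global $C^{0,\beta}$ estimate of part (ii). That is a genuinely different mechanism from the one you propose, and it is what allows the theorem to be proved in the stated generality.

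Unless you can actually supply the boundary Hopf-type lemma you invoke (which appears to fail for, say, convex polytopes where the eigenfunction is not Lipschitz up to the vertices), you should replace the pointwise comparison in part (iii) by an integral argument of the above type.
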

The proof of Theorem \ref{lam_thm} will be given in Section \ref{sec_proof}. In Section \ref{sec_com}, we will comment on its proof.\\
When $u$ is merely a convex function, $\det D^2 u~ dx$ is interpreted as the Monge-Amp\`ere measure associated with $u$. Basic facts concerning the Monge-Amp\`ere measure
and the Monge-Amp\`ere equation will be recalled in Section \ref{MA_sec}.
\begin{rem}~
\begin{myindentpar}{1cm}
 (i) Once Theorem \ref{lam_thm} is proved, we find that for smooth, open, bounded and uniformly convex domains $\Omega$ in $\R^n,$ 
 $$\lambda (\Omega)=\lambda[\Omega].$$
 (ii) We call $\lambda[\Omega]$ defined in Theorem \ref{lam_thm} the Monge-Amp\`ere eigenvalue of the 
 domain $\Omega$, and nonzero convex functions $u\in C(\overline{\Omega})$ solving (\ref{EVP_eq}) the Monge-Amp\`ere eigenfunctions of the 
 domain $\Omega$.
 \end{myindentpar}
\end{rem}
As it turns out, the theory of the Monge-Amp\`ere eigenvalue on general open bounded convex domains together with its stability property in Theorem \ref{lam_thm} allows us
to study 
the maximum and minimum problem
for the Monge-Amp\`ere eigenvalue of open bounded convex sets under a volume constraint.
In this extremal problem, the extremal sets are shown to exist but could potentially be non-strictly convex. We will discuss this problem in the next section.
\subsection{The Brunn-Minkowski, isoperimetric and reverse isoperimetric inequalities}
Using the stability with respect to the Hausdorff convergence of the Monge-Amp\`ere eigenvalue, Salani's Brunn-Minkowski inequality (\ref{lamBM}) immediately extends
to bounded convex domains.
\begin{thm}[The Brunn-Minkowski inequality]
\label{BMcor}
If $\Omega_0$ and $\Omega_1$ are open bounded convex domains in $\R^n$ and $\alpha\in [0, 1]$, then 
$$\lambda\left[(1-\alpha)\Omega_0 + \alpha\Omega_1\right]^{-\frac{1}{2n}}\geq (1-\alpha) \lambda [\Omega_0]^{-\frac{1}{2n}} + \alpha \lambda[\Omega_1]^{-\frac{1}{2n}}.$$
\end{thm}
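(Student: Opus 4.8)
The plan is to reduce the inequality for general bounded open convex domains to Salani's inequality \eqref{lamBM} for $C^2_+$ domains by a two-sided approximation in the Hausdorff distance, using the stability property Theorem \ref{lam_thm}(iv) and the fact that Minkowski addition is continuous with respect to Hausdorff convergence. The key point is that any bounded open convex domain $\Omega$ can be approximated from inside (or outside) by domains in the class $C^2_+$: indeed, one may intersect $\Omega$ with a large ball and then smooth and uniformly round off the boundary, for instance by replacing $\Omega$ with $\{x : \dist(x, \R^n\setminus\Omega) > \eps\} + \eps' B_1$ for suitable $\eps, \eps' \to 0$, or by the standard construction producing $C^\infty$, uniformly convex bodies $\Omega_0^{(m)}, \Omega_1^{(m)} \in C^2_+$ with $\Omega_i^{(m)} \to \Omega_i$ in the Hausdorff distance as $m\to\infty$.

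The steps, in order, are as follows. First I would fix approximating sequences $\Omega_0^{(m)} \to \Omega_0$ and $\Omega_1^{(m)} \to \Omega_1$ with each $\Omega_i^{(m)}$ in $C^2_+$. Second, I would invoke the continuity of Minkowski addition under Hausdorff convergence: since $\Omega_i^{(m)} \to \Omega_i$, the Minkowski combination $\Omega_\alpha^{(m)} := (1-\alpha)\Omega_0^{(m)} + \alpha\Omega_1^{(m)}$ converges in the Hausdorff distance to $\Omega_\alpha = (1-\alpha)\Omega_0 + \alpha\Omega_1$; this is a standard fact about the Hausdorff metric (it follows, e.g., from $h_{\Omega_\alpha^{(m)}} = (1-\alpha)h_{\Omega_0^{(m)}} + \alpha h_{\Omega_1^{(m)}}$ for support functions and uniform convergence of support functions characterizing Hausdorff convergence). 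Third, since each of $\Omega_0^{(m)}, \Omega_1^{(m)}$ is in $C^2_+$ (and hence so is $\Omega_\alpha^{(m)}$, as a Minkowski combination of $C^2_+$ bodies is $C^2_+$), Salani's inequality \eqref{lamBM} gives
\begin{equation}
 \lambda[\Omega_\alpha^{(m)}]^{-\frac{1}{2n}} \geq (1-\alpha)\,\lambda[\Omega_0^{(m)}]^{-\frac{1}{2n}} + \alpha\,\lambda[\Omega_1^{(m)}]^{-\frac{1}{2n}}.
 \label{BM_approx}
\end{equation}
Fourth, I would pass to the limit $m\to\infty$ in \eqref{BM_approx}: by Theorem \ref{lam_thm}(iv), $\lambda[\Omega_i^{(m)}] \to \lambda[\Omega_i]$ for $i = 0, 1$ and $\lambda[\Omega_\alpha^{(m)}] \to \lambda[\Omega_\alpha]$, and since all these limits are positive (by the estimate \eqref{lam_est}, as the volumes converge and stay bounded away from $0$ and $\infty$), the map $t \mapsto t^{-1/(2n)}$ is continuous at each of them, yielding the desired inequality. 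The endpoint cases $\alpha = 0, 1$ are trivial.

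The main obstacle — really the only nontrivial point beyond bookkeeping — is ensuring the approximation is compatible with Salani's hypotheses: one must produce, for each bounded open convex $\Omega_i$, a sequence in $C^2_+$ (open, bounded, uniformly convex, $C^2$ boundary) converging to it in the Hausdorff distance, and one must know that $C^2_+$ is stable under Minkowski combinations so that \eqref{BM_approx} is legitimately an instance of \eqref{lamBM}. Both facts are classical in convex geometry (see \cite{Sch}): the first by the standard smoothing-and-rounding procedure, and the second because the support function of a Minkowski combination is the corresponding convex combination of $C^2$ support functions with positive definite Hessians on the sphere, hence again $C^2$ with positive definite Hessian. Once these are in place, the proof is just the limiting argument above. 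One caveat worth a remark: the approximating domains $\Omega_i^{(m)}$ should be chosen with nonempty interior and with volumes bounded uniformly away from zero (which is automatic once they converge to domains with nonempty interior), so that \eqref{lam_est} keeps the eigenvalues in a compact subset of $(0, \infty)$ and the continuity of $t\mapsto t^{-1/(2n)}$ applies.
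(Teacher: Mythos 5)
Your proposal is correct and follows essentially the same argument as the paper: approximate $\Omega_0$ and $\Omega_1$ in the Hausdorff distance by smooth, uniformly convex domains, observe that the corresponding Minkowski combinations converge to $\Omega_\alpha$, apply Salani's inequality \eqref{lamBM} to the approximants, and pass to the limit using the stability result of Theorem \ref{lam_thm}(iv). The only difference is that you spell out some routine details (support-function argument for continuity of Minkowski addition, the class $C^2_+$ being closed under Minkowski combination, lower bounds on the eigenvalues) that the paper takes as understood.
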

We will give a proof of  Theorem \ref{BMcor} in Section \ref{iso_sec}.

In view of the estimates (\ref{lam_est}), a very natural problem concerning the Monge-Amp\`ere eigenvalue is to determine, among open bounded convex sets having a fixed volume, those
with 
the largest or smallest Monge-Amp\`ere eigenvalue. 
The following theorem is concerned with the maximum and minimum of the Monge-Amp\`ere eigenvalue. We call a set $K$  centrally symmetric if $-x\in K$
whenever $x\in K$.
\begin{thm}
(i) \emph{(Maximum of the Monge-Amp\`ere eigenvalue)} Among all bounded open convex sets in $\R^n$ having a fixed positive volume, the ball has the largest Monge-Amp\`ere 
eigenvalue. That is, if $\Omega$ is a bounded open convex domain and let 
$\mathcal{B}(\Omega)$ be a ball having the same volume as $\Omega$, then 
\begin{equation}\label{iso_ineq}\lambda[\Omega]\leq \lambda[\mathcal{B}(\Omega)]=
\frac{|B_1|^2 \lambda (B_1)}{|\Omega|^2}.
\end{equation}
(ii) \emph{(Minimum of the Monge-Amp\`ere eigenvalue)} Among all bounded open convex sets $\Omega$ in $\R^n$ having a fixed positive volume, there exists a convex $S$ with the smallest Monge-Amp\`ere eigenvalue:
\begin{equation}
\label{rev_ineq} \lambda[\Omega]\geq \lambda[S]~\text{for all convex}~\Omega ~\text{with}~|\Omega|=|S|.
\end{equation}
(iii) \emph{(Minimum of the Monge-Amp\`ere eigenvalue among centrally symmetric convex sets)} Among all centrally symmetric bounded open convex sets $\Omega$ in $\R^n$ having a fixed positive volume, there exists a 
centrally symmetric convex $K$ with the smallest Monge-Amp\`ere eigenvalue:
\begin{equation}
\label{rev_ineq_sym} \lambda[\Omega]\geq \lambda[K]~\text{for all centrally symmetric convex}~\Omega ~\text{with}~|\Omega|=|K|.
\end{equation}
\label{extreme_thm}
\end{thm}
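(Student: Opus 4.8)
The plan rests on four ingredients already available: the variational characterization \eqref{lam_def}, the scaling and affine behaviour of $\lambda[\cdot]$, the Brunn--Minkowski inequality (Theorem~\ref{BMcor}), and the Hausdorff stability (Theorem~\ref{lam_thm}(iv)).

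\emph{Normalizations.} From \eqref{lam_def} one reads off that for every invertible linear map $A$ one has $\lambda[A\Omega]=(\det A)^{-2}\lambda[\Omega]$: writing a competitor on $A\Omega$ as $u\circ A^{-1}$, the Monge--Amp\`ere measure scales by $(\det A)^{-2}$ while the ratio of the two integrals in \eqref{lam_def} absorbs a further factor $|\det A|$ from the change of variables. In particular $\lambda[t\Omega]=t^{-2n}\lambda[\Omega]$, and applying the formula to an ellipsoid $E=AB_1$ gives $\lambda[E]=|B_1|^{2}\lambda[B_1]\,|E|^{-2}$; since $\lambda[B_1]=\lambda(B_1)$ for the smooth uniformly convex ball, the right-hand side of \eqref{iso_ineq} is exactly $\lambda[\mathcal B(\Omega)]$. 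Hence \eqref{iso_ineq} is equivalent to the statement that the scale-invariant, in fact $GL(n)$-invariant, functional $\Phi[\Omega]:=\lambda[\Omega]\,|\Omega|^{2}$, which by \eqref{lam_est} is confined to $[c(n),C(n)]$, attains its maximum on ellipsoids.

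\emph{Part (i).} I would prove this maximality by a symmetrization argument. The point is that a volume-preserving symmetrization which ``rounds out'' $\Omega$ — a Steiner symmetrization in a hyperplane, or the Minkowski average of $\Omega$ with an isometric copy, followed if necessary by a homothety restoring the volume — cannot decrease $\lambda[\cdot]$: by Theorem~\ref{BMcor}, $\lambda^{-1/2n}$ of such a Minkowski average is at least the average of the two values, and one checks this passes to the symmetrized body. Iterating symmetrizations in a dense set of directions produces equal-volume convex bodies converging in the Hausdorff distance to the ball $\mathcal B(\Omega)$, and Theorem~\ref{lam_thm}(iv) identifies the limiting eigenvalue as $\lambda[\mathcal B(\Omega)]$, so $\lambda[\Omega]\le\lambda[\mathcal B(\Omega)]$. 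The delicate point I expect here is fixing the \emph{direction} of the monotonicity under the symmetrization — that it is $\lambda$, and not $\lambda^{-1}$, which increases — since the volume is held fixed and the Brunn--Minkowski inequalities for $\lambda^{-1/2n}$ and for $|\cdot|^{1/n}$ point the same way; the two-sided bound \eqref{lam_est} is what keeps the argument from degenerating.

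\emph{Parts (ii) and (iii).} Here I would run the direct method. Fix $V>0$ and set $\mu:=\inf\{\lambda[\Omega]:\Omega\text{ open bounded convex},\ |\Omega|=V\}$; by \eqref{lam_est}, $\mu\in[c(n)V^{-2},C(n)V^{-2}]\subset(0,\infty)$. Take a minimizing sequence $\Omega_m$ with $|\Omega_m|=V$, $\lambda[\Omega_m]\to\mu$, translated so that the centroid of $\Omega_m$ is at the origin. The heart of the matter — and the main obstacle — is compactness: one must rule out $\diam(\Omega_m)\to\infty$, since otherwise $\Omega_m$ would collapse onto a convex set of lower dimension. Once the diameters are uniformly bounded, Blaschke's selection theorem extracts a subsequence converging in the Hausdorff distance to a convex body $S$ with $|S|=V$ (volume being continuous along uniformly bounded Hausdorff-convergent convex bodies), hence with nonempty interior, and Theorem~\ref{lam_thm}(iv) gives $\lambda[S]=\lim_m\lambda[\Omega_m]=\mu$; scaling then yields \eqref{rev_ineq} for every target volume. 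To exclude $\diam(\Omega_m)\to\infty$ I would argue that a convex body of fixed volume and large diameter is a thin ``needle'' — applying Brunn--Minkowski to its cross-sections perpendicular to a diameter shows their $(n-1)$-volumes tend to $0$ — and then, via a suitable test function in \eqref{lam_def} (or monotonicity under inclusion together with the scaling law), that such bodies have $\lambda[\cdot]$ bounded away from $\mu$, contradicting minimality; equivalently, one replaces $\Omega_m$ by a non-degenerating minimizing sequence. Part (iii) is the same argument restricted to centrally symmetric bodies: here $\Omega_m$ is already symmetric about the origin, so no translation is needed, and the Hausdorff limit of centrally symmetric convex bodies is centrally symmetric, so the minimizer $K$ is centrally symmetric, giving \eqref{rev_ineq_sym}.
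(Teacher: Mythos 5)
The proposal diverges from the paper in two places, and both divergences hide real gaps.

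\textbf{Part (i).} The paper does not re-derive the isoperimetric inequality from the Brunn--Minkowski inequality: it approximates $\Omega$ by smooth uniformly convex domains $\Omega_m$, invokes the already-known result of Brandolini--Nitsch--Trombetti ($\lambda(\Omega_m)\le\lambda(\mathcal B(\Omega_m))$ for smooth domains), computes $\lambda(\mathcal B(\Omega_m))$ by scaling, and passes to the limit via Theorem~\ref{lam_thm}(iv). Your symmetrization plan is a genuinely different route, but the ``delicate point'' you flag is not merely delicate --- it is where the argument breaks. After Minkowski-averaging $\Omega$ with a reflected copy, \emph{both} $\lambda^{-1/2n}$ and $|\cdot|^{1/n}$ go up (by Theorem~\ref{BMcor} and the classical Brunn--Minkowski inequality), so after rescaling back to the original volume you have no control on the sign of the change in $\Phi=\lambda\cdot|\cdot|^2$; the two inequalities ``pointing the same way'' is precisely why they cancel rather than combine. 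The two-sided bound \eqref{lam_est} only confines $\Phi$ to a fixed interval; it does not fix the direction. You would need a separate input (this is essentially the content of the BNT comparison argument, which uses rearrangement rather than Brunn--Minkowski), so as written the symmetrization proof of (i) is incomplete.

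\textbf{Parts (ii)--(iii).} The direct method is the right framework and matches the paper in spirit, but your proposed compactness step is incorrect. You suggest ruling out $\diam(\Omega_m)\to\infty$ by showing that thin ``needles'' of fixed volume have $\lambda[\cdot]$ bounded away from the infimum $\mu$. This contradicts the affine invariance you yourself noted at the start: if $S$ were the minimizer, then $T(S)$ for any unimodular linear map $T$ is still a minimizer, and these images include arbitrarily long, arbitrarily thin bodies with $\lambda[T(S)]=\mu$ exactly. Translating to put the centroid at the origin also does nothing to prevent this degeneration. The paper's fix --- which you gesture at only in the closing clause ``equivalently, one replaces $\Omega_m$ by a non-degenerating minimizing sequence'' --- is the essential step: apply John's lemma to replace each $\Omega_m$ by a unimodular affine image $T_m(\Omega_m)$ normalized between concentric balls $B_{R_m}\subset T_m(\Omega_m)\subset B_{nR_m}$; the volume constraint then pins $R_m$ to a bounded interval, Blaschke selection applies, and affine invariance (Proposition~\ref{lam_nice}(iii)) carries $\lambda[\Omega_m]=\lambda[T_m(\Omega_m)]$ through to the limit. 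Without spelling out that normalization the compactness claim does not hold. The centrally symmetric case then follows exactly as you say, since John's normalization preserves central symmetry and Hausdorff limits of centrally symmetric bodies are centrally symmetric.
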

The proof of Theorem \ref{extreme_thm} will be given in Section \ref{iso_sec}.

When the domain $\Omega$ is smooth and uniformly convex, Theorem \ref{extreme_thm} (i) is due to Brandolini-Nitsch-Trombetti \cite[Theorem 3.2]{BNT}.  

Following the terminology in \cite{BNT}, we call (\ref{iso_ineq}) the 
{\it isoperimetric inequality for the Monge-Amp\`ere eigenvalue}. It is then reasonable to call (\ref{rev_ineq}) and (\ref{rev_ineq_sym}) the 
{\it reverse isoperimetric inequalities for the Monge-Amp\`ere eigenvalue.}  In choosing this terminology, we are partially motivated by Ball's reverse isoperimetric inequality for the classical area functional \cite{B}
which states that modulo affine transformations, among all convex bodies in $\R^n$, the $n$-dimensional regular simplex has largest surface area for a given volume, while among centrally symmetric convex bodies, the $n$-dimensional cube is extremal.

It would be interesting to determine the extremal convex sets in Theorem \ref{extreme_thm} (ii) and (iii). In 
view of the reverse isoperimetric inequality due to Ball \cite{B}, we have the following conjecture on the possible candidate for the minimum of the Monge-Amp\`ere eigenvalue.
\begin{conj}
\label{lam_conj}
\begin{myindentpar}{1cm}~\\
(i) Among all bounded open convex sets in $\R^n$ having a fixed positive volume, the n-dimensional regular simplex (that is the interior of 
the convex hull of $(n+1)$ equally spaced points in $\R^n$) has the smallest Monge-Amp\`ere eigenvalue.\\
(ii) Among all open bounded  centrally symmetric convex sets in $\R^n$ having a fixed positive volume, the n-dimensional cube has the smallest Monge-Amp\`ere eigenvalue.
\end{myindentpar}
\end{conj}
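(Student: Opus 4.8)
\emph{Proof strategy for Conjecture \ref{lam_conj}.} The starting point is that the problem is affine invariant. If $T\in GL(n,\R)$ and $u$ solves (\ref{EVP_eq}) on $\Omega$, then $v(x):=u(Tx)$ satisfies $\det D^2 v=(\det T)^2\lambda\,|v|^n$ on $T^{-1}\Omega$, so by the uniqueness part (iii) of Theorem \ref{lam_thm} we get $\lambda[T^{-1}\Omega]=(\det T)^2\lambda[\Omega]$; since $|T^{-1}\Omega|=|\det T|^{-1}|\Omega|$, the quantity
\[
F(\Omega):=|\Omega|^{2}\,\lambda[\Omega]
\]
is invariant under all invertible affine transformations, and by (\ref{lam_est}) it is pinched between $c(n)$ and $C(n)$. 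Conjecture \ref{lam_conj} is thus equivalent to the assertions that $F$ attains its minimum over all bounded open convex sets at (an affine image of) the regular simplex $\Delta$, and its minimum over all centrally symmetric such sets at (an affine image of) the cube $Q$. By Theorem \ref{extreme_thm}(ii),(iii) minimizers $S$ and $K$ exist; the content of the conjecture is to \emph{identify} them.

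The plan for this identification is to imitate Ball's proof of the reverse isoperimetric inequality. One places a minimizer in John position, so that its maximal inscribed ellipsoid is the unit ball $B_1$; John's theorem then yields contact points $z_1,\dots,z_N\in\p\Omega\cap S^{n-1}$ and weights $\mu_i>0$ with $\sum_i\mu_i\,z_i\otimes z_i=I_n$ (together with $\sum_i\mu_iz_i=0$ in the centrally symmetric case), so that $\Omega$ sits inside the ``John cell'' $P:=\bigcap_{i}\{x:\langle x,z_i\rangle\le1\}$ (resp.\ $\bigcap_i\{|\langle x,z_i\rangle|\le1\}$), which is a simplex precisely when $N=n+1$ and the cube precisely when the $z_i$ are the $\pm$ coordinate directions. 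One then wants a Brascamp--Lieb type inequality for the eigenvalue functional asserting that, for $\Omega$ in John position, $F(\Omega)\ge F(P)$ with equality only if $P$ is the regular simplex (resp.\ the cube); combined with affine invariance and Ball's Brascamp--Lieb volume bound for $P$, this would finish the proof. A natural way to bring Brascamp--Lieb into play in the variational formula (\ref{lam_def}) is to test it against a function built from the affine functions $\ell_i(x):=1-\langle x,z_i\rangle$ cutting out $P$: for $g:=\sum_i\theta_i\log\ell_i$ with $\theta_i>0$ and $\sum_i\theta_i\le1$ one checks, using $\sum_i\mu_iz_iz_i^T=I$ and the weighted Cauchy--Schwarz inequality $\nabla g\otimes\nabla g\preceq(\sum_i\theta_i)(-D^2 g)$, that $w:=-e^{g}$ is convex on $P$, vanishes on $\p P$, and has a Monge--Amp\`ere quotient that can be estimated, via the geometric Brascamp--Lieb inequality, by the exact value on the simplex or cube.

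As an alternative I would develop the first-variation calculus of $F$. Perturbing $\p S$ along its outer unit normal with speed $\varphi$ and differentiating $F(S_t)$, optimality of $S$ should yield an overdetermined boundary condition on $\p S$ that balances the boundary trace of the Monge--Amp\`ere mass of the eigenfunction (equivalently, a suitable power of its normal derivative) against the mean curvature of $\p S$; one then hopes to show that no smooth strictly convex boundary arc can satisfy it, forcing $S$ to be a polytope, and finally that on a polytope the balanced condition together with the affine rigidity of $F$ forces exactly $n+1$ facets (resp.\ the cube). A consistency check would accompany either route: estimating $F(\Delta)$ and $F(Q)$ from above using radial-type trial functions on $\Delta$ and the product structure of $Q$, and from below via (\ref{lam_est}), to confirm that the conjectured values are numerically plausible.

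The main obstacle is structural. In Ball's theorem the perimeter is a local, additive surface functional that literally decomposes over the facets of the John cell, which is exactly what makes the Brascamp--Lieb inequality applicable; the Monge--Amp\`ere eigenvalue is by contrast a global, nonlinear quantity, it has no closed-form eigenfunction on either the simplex or the cube, and it admits no such decomposition --- so the second step demands a genuinely new Brascamp--Lieb-type inequality for $\det D^2(\cdot)$-energies rather than an application of the classical one. The first-variation route runs into the boundary degeneracy of (\ref{EVP_eq}): the eigenfunction is only $C^{0,\beta}(\ov\Omega)$ for $\beta<1$ (Theorem \ref{lam_thm}(ii)), so the Hadamard shape-derivative formulas are not a priori valid and even formulating the optimality condition rigorously is delicate. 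Resolving one of these two difficulties --- establishing the nonlinear Brascamp--Lieb inequality, or making the degenerate first-variation analysis rigorous --- is, I believe, the crux of the conjecture.
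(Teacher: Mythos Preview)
This statement is labeled and treated in the paper as an open \emph{conjecture}: the author motivates it by analogy with Ball's reverse isoperimetric inequality, remarks that extremal sets exist by Theorem~\ref{extreme_thm}(ii),(iii), and explicitly leaves the identification of the minimizers as an open problem. There is therefore no proof in the paper to compare your proposal against.

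What you have written is, appropriately, not a proof but a sketch of two possible attack routes together with an honest accounting of why neither one closes. Your observation about affine invariance of $|\Omega|^2\lambda[\Omega]$ is correct and is exactly Proposition~\ref{lam_nice}(iii). The Ball/Brascamp--Lieb analogy is the natural heuristic and is precisely what motivates the conjecture in the paper; your diagnosis that the Monge--Amp\`ere eigenvalue is global and nonlinear, with no product or facet decomposition and no explicit eigenfunction on the simplex or cube, correctly identifies why the classical Brascamp--Lieb machinery does not transfer directly. Likewise, your remark that the shape-derivative approach is obstructed by the boundary degeneracy (the eigenfunction is only $C^{0,\beta}(\overline{\Omega})$ for $\beta<1$ on general convex domains) is consistent with the regularity established in the paper. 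So your write-up is a reasonable research outline, but you should be aware that it does not constitute a proof, and the paper itself makes no claim to have one.
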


By the affine invariant property of the Monge-Amp\`ere eigenvalue (see Proposition \ref{lam_nice}), the images of extremal sets in Theorem \ref{extreme_thm} and 
Conjecture \ref{lam_conj} under volume preserving affine transformations are also extremal sets. 

For results related to Theorem \ref{extreme_thm} (ii) and (iii) 
and Conjecture \ref{lam_conj} for the case of the Laplacian eigenvalue, see \cite{BF} and the references therein. In this case, the complete solution is only available 
in dimensions $n=2$ for axisymmetric extremal sets; see Proposition 10 and Theorem 12 in \cite{BF}.
\subsection{On the proof of Theorem \ref{lam_thm} and related results}
\label{sec_com}
Let us say a few words about the proof of Theorem \ref{lam_thm} to be presented in Section \ref{sec_proof}. The bounds on $\lambda$ in (\ref{lam_est}) are based on standard techniques in the Monge-Amp\`ere equation.
They will be proved in Lemma \ref{Alem} and Corollary \ref{eigen_est}. 

Next, we describe how to solve (\ref{EVP_eq}) for $\lambda=\lambda[\Omega]$ defined by (\ref{lam_def}).
We construct, as in Tso \cite{Tso}, a nonzero convex solution $u\in C(\overline{\Omega})$ of (\ref{EVP_eq}) as a limit, when $p\nearrow n$ of nonzero convex solutions to the degenerate Monge-Amp\`ere equations
\begin{equation}
 \left\{
 \begin{alignedat}{2}
   \det D^{2} u~&=\lambda |u|^{p} \h~&&\text{in} ~\Omega, \\\
u &=0\h~&&\text{on}~\p \Omega.
 \end{alignedat}
 \right.
 \label{DMA0}
\end{equation}
This class of equations was studied by Tso \cite{Tso} for smooth, bounded, and uniformly convex domains and Hartenstine \cite{Har2} for bounded and strictly convex domains. It turns out 
that we can obtain
the same qualitative results for the above equations as in \cite{Tso, Har2} for general bounded convex domains; see Theorem \ref{sub_thm}. It is interesting to note that the 
above equations with $\Omega$ being a polytope also arise naturally
in Donaldson's analysis of the Abreu equation \cite{D2} in complex geometry. We will discuss all these in Section \ref{D_sec}.

The proof of the global almost Lipschitz property of the Monge-Amp\`ere eigenfunctions in Proposition \ref{alm_Lip}, that is, 
$u\in C^{0,\beta}(\overline{\Omega})$ for all $\beta\in (0, 1)$, is based on an iteration argument using the maximum principle. 
Interestingly, our argument also shows that  when $\Omega$ is a general open bounded convex domain, nonzero convex solutions to (\ref{DMA0}) are globally almost Lipschitz for all $n-2\leq p\neq n$ (see Proposition \ref{pn2_prop}) but it does not indicate whether the same conclusion holds for $p\in [0, n-2)$ when $n\geq 3$.
On the other hand, when $\Omega$ is an open bounded domain with uniformly convex smooth boundary, nonzero convex solutions to (\ref{DMA0})  are globally smooth as the Monge-Amp\`ere eigenfunctions exactly when $p$ is a positive integer, regardless whether it is less than $n-2$ or not. 
For reader's convenience, we state here an optimal global regularity result which complements Theorem 1.4 in \cite{LS} on the global smoothness of the Monge-Amp\`ere eigenfunctions. 
\begin{thm} [see Theorem \ref{Rlem}] Let $\Omega$ be an open bounded domain with uniformly convex smooth boundary. Let $p>0$ and $p\neq n$. 
 Assume that $u\in C(\overline{\Omega})$ is a nonzero convex solution of 
 \begin{equation*}
 \left\{
 \begin{alignedat}{2}
   \det D^{2} u~&= |u|^p \h~&&\text{in} ~\Omega, \\\
v &=0\h~&&\text{on}~\p \Omega.
 \end{alignedat}
 \right.
\end{equation*}
\begin{myindentpar}{1cm}
 (i) If $p$ is a positive integer then $u\in C^{\infty}(\overline{\Omega}).$\\
 (ii) If  $p$ is a not an integer then $u\in C^{2 + [p],\beta}(\overline{\Omega}) $ for some $\beta=\beta (n, p)\in (0, 1)$ but $u\not\in C^{3+[p]}(\overline{\Omega}).$ Here,
 $[p]$ denotes the greatest integer not exceeding $p$.
\end{myindentpar}
\label{Rlem0}
\end{thm}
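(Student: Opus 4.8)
The plan is to read off the interior smoothness of $u$ from the classical Monge--Amp\`ere theory and then to perform a precise boundary analysis quantifying the degeneracy produced by the factor $|u|^p$. Since $u$ is convex, nonzero and vanishes on $\p\Omega$, the maximum principle gives $u<0$ in $\Omega$, so $|u|^p=(-u)^p$ is positive and continuous in $\Omega$; Caffarelli's interior theory (strict convexity, $C^{1,\alpha}$ and $W^{2,q}$ estimates) together with a Schauder bootstrap using the right-hand side $(-u)^p$ yields $u\in C^{\infty}(\Omega)$ (this $u$ may be taken from the construction in Theorem~\ref{sub_thm}). Thus everything reduces to the behavior of $u$ near $\p\Omega$.

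\emph{Linear vanishing.} I would first show that $-u$ is comparable to $\dist(\cdot,\p\Omega)$ near $\p\Omega$. The bound $-u\le C\,\dist(\cdot,\p\Omega)$ follows by comparison with the solution $\bar u$ of $\det D^{2}\bar u=\norm{(-u)^{p}}_{L^{\infty}(\Omega)}$, $\bar u=0$ on $\p\Omega$, which is comparable to the distance since $\p\Omega$ is smooth and uniformly convex. The opposite inequality is forced by convexity: if some subgradient of $u$ vanished at a point of $\p\Omega$, then $u$ would lie above the zero affine function and hence be $\ge0$, contradicting $u<0$; so the inner normal derivative of $u$ is strictly positive on $\p\Omega$, uniformly by compactness, giving $-u\ge c\,\dist(\cdot,\p\Omega)$. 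In particular $(-u)^{p}$ vanishes along $\p\Omega$ exactly like $\dist(\cdot,\p\Omega)^{p}$, so the equation is degenerate on the boundary.

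\emph{Degenerate boundary regularity and bootstrap.} The heart of the proof is a boundary regularity statement for equations of the form $\det D^{2}w=\dist(\cdot,\p\Omega)^{\alpha}\psi$ with $\psi$ positive and smooth up to $\p\Omega$ and $\alpha>0$: such a $w$ vanishes linearly, satisfies $\p_{\nu\nu}w\asymp\dist(\cdot,\p\Omega)^{\alpha}$, and near $\p\Omega$ has the form $(\text{smooth part})+c(x')\dist(\cdot,\p\Omega)^{\alpha+2}$; hence $w\in C^{2+[\alpha],\beta}(\ov\Omega)$, and $w\in C^{\infty}(\ov\Omega)$ when $\alpha$ is a nonnegative integer. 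One proves this by straightening $\p\Omega$, using boundary localization and affine normalization at each boundary point, constructing matched sub/supersolution barriers with profile $-a(x')\dist+c(x')\dist^{\alpha+2}$, and running a Schauder-type iteration that tracks the exact loss of regularity carried by the factor $\dist^{\alpha}$ --- in the spirit of, and partly using, the techniques of \cite{LS} for the case $\alpha=n$. Granting this, apply it with $\alpha=p$ and bootstrap: from $-u\asymp\dist$, the quotient $(-u)/\dist$ is bounded between positive constants, so $(-u)^{p}=\bigl((-u)/\dist\bigr)^{p}\dist^{p}$ has $(-u)^{p}\dist^{-p}$ of the same regularity as $(-u)/\dist$, the estimate improves $u$, which improves $(-u)/\dist$, and so on. If $p$ is a positive integer then $(-u)^{p}$ is a polynomial in $u$, $(-u)^{p}\dist^{-p}$ stays as smooth as $u$ at each stage, and the iteration reaches $u\in C^{\infty}(\ov\Omega)$, proving (i). If $p\notin\mathbb Z$, the factor $\dist^{p}$ caps the regularity of the right-hand side at $C^{[p],\{p\}}$, so the iteration stabilizes at $u\in C^{2+[p],\beta}(\ov\Omega)$ for some $\beta=\beta(n,p)\in(0,1)$ (one may take $\beta=p-[p]$), which is the positive half of (ii).

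\emph{Sharpness, and the main obstacle.} For $p\notin\mathbb Z$, if $u\in C^{3+[p]}(\ov\Omega)$ then $D^{2}u\in C^{1+[p]}(\ov\Omega)$, hence $\det D^{2}u\in C^{1+[p]}(\ov\Omega)$; but near $\p\Omega$ one has $-u=a(x')\dist+O(\dist^{2})$ with $a>0$ smooth, so $(-u)^{p}=\dist^{p}\cdot(\text{smooth, positive})$, whose $([p]+1)$-st normal derivative grows like $\dist^{\,p-[p]-1}$ and is unbounded; thus $(-u)^{p}\notin C^{1+[p]}(\ov\Omega)$, contradicting $\det D^{2}u=(-u)^{p}$. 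So $u\notin C^{3+[p]}(\ov\Omega)$. The genuine difficulty is the degenerate boundary regularity above: the standard boundary Schauder and pointwise $C^{2,\alpha}$ estimates for the Monge--Amp\`ere equation require the right-hand side to be bounded between positive constants, which fails here since $(-u)^{p}\asymp\dist^{p}\to0$ on $\p\Omega$. One must instead build the boundary theory essentially from scratch --- guessing and verifying the singular profile $-a(x')\dist+c(x')\dist^{p+2}$ via barriers and affine normalization, then upgrading the tangential smoothness and the structure of the normal expansion through a delicate iteration --- and, along the bootstrap, check at every stage that $(-u)/\dist$ gains regularity in step with $u$, up to the ceiling imposed by $\dist^{p}$.
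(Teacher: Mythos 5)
Your overall architecture is right, and several pieces match the paper's proof closely: the interior smoothness via Caffarelli's theory, the two-sided linear vanishing $-u\asymp\dist(\cdot,\p\Omega)$ (the paper's (\ref{ulow}) and (\ref{uup}), proved by the cone argument and a barrier $\mu(e^{\rho}-1)$), and the sharpness argument for (ii), which is essentially identical to the paper's Step 1. But the core of the theorem --- the positive regularity statements --- rests in your write-up on a ``degenerate boundary regularity statement'' for $\det D^2 w=\dist^{\alpha}\psi$ that you state as a black box and then ``grant.'' That statement \emph{is} the theorem, up to the trivial bootstrap you describe, and the sketch you offer for it (straightening the boundary, barriers with profile $-a(x')\dist+c(x')\dist^{\alpha+2}$, and an unspecified ``Schauder-type iteration that tracks the loss of regularity'') is not an argument: barriers can give the two-sided bound $\p_{\nu\nu}w\asymp\dist^{\alpha}$ and the leading-order profile, but they do not produce H\"older or higher-derivative estimates, and standard boundary Schauder theory is exactly what fails here because the right-hand side degenerates. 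Your bootstrap also never explains where regularity in the \emph{normal} direction comes from, nor why the count stops at exactly $2+[p]$ derivatives for non-integer $p$ and continues indefinitely for integer $p$; the heuristic ``$\dist^{p}$ caps the right-hand side at $C^{[p],\{p\}}$'' is a necessary condition, not a mechanism.

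The paper's mechanism, which your proposal is missing, is the partial Legendre transform. After establishing $u\in C^{2,\beta}(\ov\Omega)$ (via quadratic separation on $\p\Omega$, Savin's boundary localization, and Theorem 1.2 of \cite{LS}), one rewrites the equation near a boundary point as $\det D^2\tilde u=x_n^{p}\,\tilde u_n^{n+2}$ in a half-ball and applies the partial Legendre transform in the tangential variables. This converts the fully nonlinear degenerate equation into a quasilinear equation of the form $y_n^{p}(-u^*_n)^{n+2}\det D^2_{y'}u^*+u^*_{nn}=0$, whose linearization $y_n^{p}\sum_{i,j<n}a^{ij}v_{ij}+v_{nn}=y_n^{p}f$ admits Schauder estimates in an adapted quasi-metric (Proposition 5.3 of \cite{LS}). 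Tangential derivatives of $u^*$ solve this linear equation, so one iterates to get all derivatives $D^m_{y'}D^l_{y_n}u^*$ with $l\le 2$; regularity in the normal direction is then obtained by differentiating the transformed equation in $y_n$, and it is precisely here that the dichotomy is decided: the factor $y_n^{p}$ can be differentiated indefinitely when $p$ is a positive integer (giving $C^{\infty}$) but only $[p]$ times otherwise (giving $C^{2+[p],\beta}$). Without this reduction, or some substitute for it, your proof of parts (i) and the positive half of (ii) has a genuine gap. (A minor additional point: the exponent $\beta$ produced by this scheme is constrained by the degenerate Schauder theory and is not simply $p-[p]$ as you suggest.)
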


Another way to construct nonzero convex solutions to (\ref{EVP_eq}) with $\lambda$ replaced by a (possible different) positive constant $\Lambda$ is by approximation.
We approximate $\Omega$ by a sequence of open, bounded, smooth and uniformly convex domains $\{\Omega_m\}$ in $\R^n$ 
 that converges to $\Omega$ in the Hausdorff distance. For each $m$, we
take $u_m\in C^{1,1}(\overline{\Omega_m})\cap C^{\infty}(\Omega_m)$ with $\|u_m\|_{L^{\infty}(\Omega_m)}=1$ to be 
a convex eigenfunction of the Monge-Amp\`ere operator on $\Omega_m$:
\begin{equation*}
\left\{
 \begin{alignedat}{2}
   \det D^2 u_m~& = \lambda(\Omega_m)|u_m|^n~&&\text{in} ~  \Omega_m, \\\
u_m &= 0~&&\text{on}~ \p\Omega_m.
 \end{alignedat} 
  \right.
  \end{equation*}
We then let $m\rightarrow\infty$ to obtain a nonzero convex solution to (\ref{EVP_eq}) with $\lambda$ replaced by $\Lambda=\lim_{m\rightarrow \infty}\lambda(\Omega_m)$; see 
Proposition
\ref{Lam_prop}.

Thus, there arises naturally the question of the uniqueness of the eigenvalue and eigenfunctions for the Monge-Amp\`ere operator on general open bounded convex domains.
{\it This uniqueness issue is the main difficulty in the proof of Theorem \ref{lam_thm}.} The proof of uniqueness in \cite{Ls} (see also \cite{W}) uses crucially
the Lipschitz property of the eigenfunctions. This is possible when $\Omega$ is smooth and uniformly convex. In our situation, the eigenfunctions are not
Lipschitz in general and we need new arguments. The proof of the uniqueness of the  Monge-Amp\`ere eigenvalue in {\it Step 1} of Proposition \ref{uni_lam}  is a simple application of  the key estimate in Proposition \ref{detC} 
which is a form of nonlinear integration by parts. 
\begin{prop}[Nonlinear integration by parts]
 \label{detC} Let $\Omega$ be a bounded open convex domain in $\R^n$.
 Suppose that $u, v\in C(\overline{\Omega})\cap C^5 (\Omega)$ are strictly convex functions in $\Omega$ with $u=v=0$ on $\p\Omega$ and that there is a constant $M>0$ such that
 \begin{equation}\int_{\Omega}(\det D^2 u)^{\frac{1}{n}}  (\det D^2 v)^{\frac{n-1}{n}}~dx\leq M,~\text{and}~\int_{\Omega}\det D^2 v~dx\leq M.
 \label{detuv}
 \end{equation} Then
\begin{equation} \label{term0}\int_{\Omega} |u|\det D^2 v~dx \geq \int_{\Omega} |v|(\det D^2 u)^{\frac{1}{n}} (\det D^2 v)^{\frac{n-1}{n}}~dx.
\end{equation}
\end{prop}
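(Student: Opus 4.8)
The plan is to establish \eqref{term0} by integrating the quantity $|u|\det D^2 v$ by parts and exploiting the divergence structure of the cofactor matrix of $D^2 v$. First I would recall that the cofactor matrix $V = (V^{ij})$ of $D^2 v$ is divergence free, i.e. $\sum_i \partial_i V^{ij} = 0$, so that for smooth strictly convex $v$ we have $\det D^2 v = \frac1n \sum_{i,j} V^{ij}\partial_{ij}v = \frac1n \sum_{i,j}\partial_i\bigl(V^{ij}\partial_j v\bigr)$. Hence, since $u = v = 0$ on $\partial\Omega$ and $u<0$, $v<0$ in $\Omega$,
\begin{equation*}
\int_\Omega |u|\det D^2 v\,dx = -\frac1n\int_\Omega u\sum_{i,j}\partial_i\bigl(V^{ij}\partial_j v\bigr)\,dx = \frac1n\int_\Omega \sum_{i,j}V^{ij}\partial_i u\,\partial_j v\,dx,
\end{equation*}
the boundary terms vanishing because $u=0$ on $\partial\Omega$ (a limiting/exhaustion argument on $\{v<-\e\}$ handles the fact that $u,v$ need only be continuous up to the boundary, using the integrability hypotheses \eqref{detuv} to control the error terms).

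The next step is to bound the bilinear form $\sum_{i,j} V^{ij}\partial_i u\,\partial_j v$ from below. Since $V = (\det D^2 v)(D^2 v)^{-1}$ is positive definite, by the Cauchy–Schwarz inequality for the inner product induced by $V$,
\begin{equation*}
\sum_{i,j}V^{ij}\partial_i u\,\partial_j v \geq -\Bigl(\sum_{i,j}V^{ij}\partial_i u\,\partial_j u\Bigr)^{1/2}\Bigl(\sum_{i,j}V^{ij}\partial_i v\,\partial_j v\Bigr)^{1/2},
\end{equation*}
but this gives the wrong sign, so instead I would run the integration by parts the other way to produce $|v|$: write $\frac1n\int_\Omega \sum_{i,j}V^{ij}\partial_i u\,\partial_j v\,dx$ and integrate by parts moving the derivative off $\partial_j v$ onto $V^{ij}\partial_i u$; using $\sum_i\partial_i V^{ij}=0$ again and $v=0$ on $\partial\Omega$ this equals $-\frac1n\int_\Omega v\sum_{i,j}V^{ij}\partial_{ij}u\,dx = \frac1n\int_\Omega |v|\,\trace(V D^2 u)\,dx$. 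Now the key algebraic inequality is the matrix arithmetic–geometric mean (or Maclaurin/Minkowski) inequality: for positive definite symmetric matrices $A = D^2 u$ and $B = D^2 v$ with cofactor matrix $V$ of $B$,
\begin{equation*}
\trace(V D^2 u) = \sum_{i,j}V^{ij}A_{ij} \geq n\,(\det A)^{1/n}(\det B)^{(n-1)/n},
\end{equation*}
which follows from the fact that $\sum_{i,j}V^{ij}A_{ij}/n$ is the mixed discriminant $D(A,B,\dots,B)$ and the Alexandrov–Fenchel type inequality $D(A,B,\dots,B)\geq (\det A)^{1/n}(\det B)^{(n-1)/n}$ for positive semidefinite matrices. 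Combining,
\begin{equation*}
\int_\Omega |u|\det D^2 v\,dx = \frac1n\int_\Omega |v|\,\trace(V D^2 u)\,dx \geq \int_\Omega |v|(\det D^2 u)^{1/n}(\det D^2 v)^{(n-1)/n}\,dx,
\end{equation*}
which is \eqref{term0}.

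I expect the main obstacle to be the justification of the two integration by parts steps under the weak regularity hypotheses, since $u$ and $v$ are only $C(\overline\Omega)\cap C^5(\Omega)$ and need not be $C^1$ up to $\partial\Omega$ — in particular the gradients $\partial_j v$, $\partial_i u$ may blow up near $\partial\Omega$. The remedy is to integrate over the sublevel sets $\Omega_\e = \{x\in\Omega: v(x) < -\e\}$, which are compactly contained in $\Omega$ with smooth boundary for a.e. $\e>0$ (by Sard's theorem applied to $v\in C^5(\Omega)$), perform all manipulations there where everything is smooth, and then let $\e\to 0$. On $\partial\Omega_\e$ one has $v \equiv -\e$, so the boundary term from the first integration by parts is $\frac{\e}{n}\int_{\partial\Omega_\e}\sum_{i,j}V^{ij}\nu_i\,\partial_j u\,d\mathcal H^{n-1}$, whose absolute value is controlled using that $u\to 0$ needs to be replaced by the fact that $|u|$ is small plus the hypotheses \eqref{detuv}; more carefully, one replaces $u$ by $u$ itself (which is bounded) and $v$ by $v+\e$ (which vanishes on $\partial\Omega_\e$) in the appropriate places so that the genuinely problematic boundary integrals disappear, and uses the global integral bounds $\int_\Omega \det D^2 v\,dx \leq M$ and $\int_\Omega(\det D^2 u)^{1/n}(\det D^2 v)^{(n-1)/n}\,dx\leq M$ to pass the interior integrals to the limit by dominated convergence. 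This bookkeeping — choosing which function to "anchor at the boundary" at each step so that the error terms are integrable and vanish — is the delicate part; the algebraic inequalities (divergence-free cofactors, mixed-discriminant AM–GM) are classical.
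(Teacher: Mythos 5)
Your proof is correct, but it takes a genuinely different route from the paper to handle the lack of boundary regularity. The shared core is the algebraic skeleton: divergence-free cofactor matrix of $D^2 v$, two integrations by parts, and the matrix inequality $\trace(V D^2 u) \geq n(\det D^2 u)^{1/n}(\det D^2 v)^{(n-1)/n}$ (which is indeed just $\trace(AB) \geq n(\det A)^{1/n}(\det B)^{1/n}$ applied to $A = D^2 u$, $B = \text{cof}\,D^2 v$, using $\det(\text{cof}\,D^2 v) = (\det D^2 v)^{n-1}$). Where you diverge is in justifying the boundary behavior. You exhaust $\Omega$ directly by the sublevel sets $\Omega_\eps = \{v < -\eps\}$ and, in the second integration by parts, replace $v$ by $v+\eps$ so that the boundary integral $\int_{\partial\Omega_\eps} u_i\nu_j V^{ij}(v+\eps)$ vanishes identically, while the first boundary integral $\int_{\partial\Omega_\eps} u\,\nu_i V^{ij} v_j$ is controlled by $\sup_{\partial\Omega_\eps}|u| \cdot n\int_{\Omega_\eps}\det D^2 v \leq nM \sup_{\partial\Omega_\eps}|u| \to 0$; the remaining error $\eps\int_{\Omega_\eps}(\det D^2 u)^{1/n}(\det D^2 v)^{(n-1)/n}\,dx \leq \eps M$ also vanishes. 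This works and is elegant, since both hypotheses in \eqref{detuv} get used exactly to absorb the two error terms. The paper instead first proves the identity as a standalone lemma (Lemma~\ref{detdet}) for $u,v \in C^{1,1}(\overline{\Omega})\cap C^4(\Omega)$ on smooth uniformly convex domains, where $u,v \in C^{1,1}$ up to the boundary makes the boundary terms vanish outright, and then for a general convex $\Omega$ approximates from within by smooth uniformly convex $\Omega_m$ and solves auxiliary Dirichlet problems $\det D^2 u_m = \det D^2 u$, $\det D^2 v_m = \det D^2 v$ on $\Omega_m$ with zero boundary data, invoking Caffarelli--Nirenberg--Spruck to get $u_m,v_m \in C^{4+\alpha}(\overline{\Omega_m})$, applying the lemma, and passing to the limit using $\|v - v_m\|_{L^\infty(\Omega_m)} \to 0$ via the comparison principle. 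Your route is more self-contained and avoids invoking global CNS regularity; the paper's route packages the boundary analysis into a clean intermediate lemma and relies on a general-purpose approximation scheme. Both are valid; just make sure in a final write-up that you argue $\partial\Omega_\eps$ is a smooth hypersurface (from strict convexity of $v$ the gradient $Dv$ vanishes only at the unique minimum, so every level set $\{v = -\eps\}$ with $-\eps > \min v$ is a smooth compact hypersurface), and note that only positive semi-definiteness of $D^2 v$ is needed for the cofactor matrix $V$ and the AM--GM inequality.
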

We will prove Proposition \ref{detC} in Section \ref{detC_sec}.

The proof of the uniqueness of the  Monge-Amp\`ere eigenfunctions up to positive multiplicative constants in {\it Step 2} of Proposition \ref{uni_lam} relies on a delicate application of Proposition \ref{detC} in its full strength.
This application uses crucially the global almost Lipschitz property of the eigenfunctions in Proposition \ref{alm_Lip} which gives, among other things, an important integrability property of the eigenfunctions and their 
Hessians as stated in the following lemma.
\begin{lem} [see Lemma \ref{inte_lem}]
\label{inte_lem0} Let $\Omega$ be a bounded open convex domain in $\R^n$.
Suppose that $u, v\in C(\overline{\Omega})$ are nonzero convex eigenfunctions to (\ref{EVP_eq}). 
Then, there exists a constant $C=C(n,\Omega)$ depending only $n$ and $\Omega$ so that 
$$\int_{\Omega} \Delta u |v|^{n-1}~dx\leq C \|u\|_{L^{\infty}(\Omega)} \|v\|^{n-1}_{L^{\infty}(\Omega)}.$$
\end{lem}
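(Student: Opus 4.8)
The plan is to reduce the estimate to a nonlinear integration by parts identity and then control everything by the distance to $\p\Omega$. First I record two elementary facts about any convex function $w\in C(\overline\Omega)\cap C^1(\Omega)$ with $w=0$ on $\p\Omega$ and $w<0$ in $\Omega$ (by Proposition~\ref{alm_Lip} both $u$ and $v$ are of this type, and moreover lie in $C^{0,\beta}(\overline\Omega)\cap C^\infty(\Omega)$ for every $\beta\in(0,1)$):
\[
|\nabla w(x)|\le\frac{|w(x)|}{\dist(x,\p\Omega)}\qquad\text{and}\qquad |w(x)|\le C(n,\Omega)\,\|w\|_{L^\infty(\Omega)}\,\dist(x,\p\Omega)^{\beta}.
\]
The first follows by moving from $x$ in the direction $\nabla w(x)/|\nabla w(x)|$ until one reaches $\p\Omega$, where $w$ vanishes, and invoking the gradient (subgradient) inequality; the ray hits $\p\Omega$ at distance $\ge\dist(x,\p\Omega)$. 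The second is Proposition~\ref{alm_Lip} applied by comparing $w(x)$ with its boundary value $0$, with the linear dependence on $\|w\|_{L^\infty}$ obtained by the scaling $w\mapsto w/\|w\|_{L^\infty}$ (which preserves the class of eigenfunctions). I will use both bounds for $u$ and for $v$.

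Next comes the integration by parts. For $0<s<\|v\|_{L^\infty(\Omega)}$ set $\Omega_s:=\{x\in\Omega:v(x)<-s\}$; this is a bounded open convex set with $\overline{\Omega_s}\subset\Omega$, and since $v\in C^\infty(\Omega)$ is strictly convex (because $\det D^2 v=\lambda|v|^n>0$ in $\Omega$, forcing $D^2 v>0$ there), $\nabla v$ vanishes only at the unique minimizer, so $\{v=-s\}$ is a smooth compact hypersurface on which $|v|\equiv s$. Green's first identity with the test function $|v|^{n-1}\in C^\infty(\overline{\Omega_s})$, the divergence theorem, and the relation $\nabla(|v|^{n-1})=-(n-1)|v|^{n-2}\nabla v$ give
\[
\int_{\Omega_s}(\Delta u)\,|v|^{n-1}\,dx\;=\;s^{n-1}\!\int_{\Omega_s}\!\Delta u\,dx\;+\;(n-1)\!\int_{\Omega_s}\!|v|^{n-2}\,\nabla u\cdot\nabla v\,dx.
\]
Now I let $s\to 0^+$. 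The left-hand side increases to $\int_\Omega(\Delta u)|v|^{n-1}\,dx$ by monotone convergence. For the boundary term, $\int_{\Omega_s}\Delta u\,dx=\int_{\p\Omega_s}\p_\nu u\,d\sigma\le \mathrm{Per}(\Omega_s)\sup_{\p\Omega_s}|\nabla u|\le \mathrm{Per}(\Omega)\,C\,\|u\|_{L^\infty}s^{1-1/\beta}$, using perimeter monotonicity for nested convex sets and the Step~1 bounds (points of $\p\Omega_s$ satisfy $\dist(\cdot,\p\Omega)\ge c\,s^{1/\beta}$); hence $s^{n-1}\int_{\Omega_s}\Delta u\,dx=O(s^{\,n-1/\beta})\to 0$ provided $\beta>1/n$.

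Fix then $\beta\in(1/n,1)$. By Step~1,
\[
|v|^{n-2}\,|\nabla u|\,|\nabla v|\;\le\;\frac{|u|\,|v|^{n-1}}{\dist(x,\p\Omega)^2}\;\le\;C(n,\Omega)\,\|u\|_{L^\infty(\Omega)}\,\|v\|_{L^\infty(\Omega)}^{\,n-1}\,\dist(x,\p\Omega)^{\,n\beta-2},
\]
and since $n\beta-2>-1$, the function $\dist(\cdot,\p\Omega)^{n\beta-2}$ is integrable on the bounded convex domain $\Omega$; this legitimizes the dominated convergence needed to pass to the limit in the last integral above and yields $\int_\Omega(\Delta u)|v|^{n-1}\,dx=(n-1)\int_\Omega|v|^{n-2}\nabla u\cdot\nabla v\,dx$. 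Integrating the displayed pointwise bound then gives $\int_\Omega(\Delta u)|v|^{n-1}\,dx\le C(n,\Omega)\,\|u\|_{L^\infty(\Omega)}\,\|v\|_{L^\infty(\Omega)}^{\,n-1}$, as claimed (the constant may change from line to line).

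The main obstacle is the admissible range of $\beta$. The crude gradient bound $|\nabla u|\le\|u\|_{L^\infty}/\dist$ only produces integrability for $\beta>1/(n-1)$, which is vacuous when $n=2$; replacing it by the sharper $|\nabla u(x)|\le|u(x)|/\dist(x,\p\Omega)$ and combining with the Hölder decay of $u$ itself lowers the threshold to $\beta>1/n$, available for every $n\ge 2$, while at the same time producing precisely the homogeneity $\|v\|_{L^\infty}^{n-1}$. The remaining care — the divergence theorem on $\Omega_s$ and the vanishing of the boundary term — rests on the interior smoothness and strict convexity of the eigenfunction $v$ (whence the smoothness of the level sets $\{v=-s\}$) together with perimeter monotonicity among convex sets.
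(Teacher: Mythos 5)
Your proof is correct and follows essentially the same route as the paper's proof of Lemma \ref{inte_lem}: integrate $\Delta u\,|v|^{n-1}$ by parts over an exhaustion of $\Omega$, control the gradient terms via $|Du|\le |u|/\dist(\cdot,\p\Omega)$ together with the almost-Lipschitz decay of Proposition \ref{alm_Lip}, and use the integrability of $\dist(\cdot,\p\Omega)^{n\beta-2}$ for $\beta>1/n$. The only (harmless) variations are that you exhaust by the level sets $\{v<-s\}$ and show the boundary term vanishes, whereas the paper uses a generic smooth convex exhaustion and merely bounds it, fixing $\beta=\tfrac{n}{n+1}$.
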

Finally, the stability of the Monge-Amp\`ere eigenvalue with respect to the Hausdorff convergence of the domains follows from the approximability and uniqueness of the Monge-Amp\`ere eigenvalue.
\vglue 0.2cm
Throughout the paper, the dimension $n$ ($\geq 2$) of $\R^n$ is fixed. The open ball centered at the origin with radius $r$ is denoted by $B_r$.
We use $c(n), C(n)$ to denote positive constants depending only on $n$. We think of $c$ and $C$ as being small and large constants, respectively.  We denote the volume (or Lebesgue
measure) and diameter of a set $K$ in $\R^n$ by $|K|$ and $\diam K$, respectively. The distance function to a closed set $\Gamma$ is denoted by $\dist(\cdot,\Gamma)$.
The dependence of various constants also
on other parameters like $p,$ $\diam\Omega$, $|\Omega|$ will be denoted by $c(p, \diam\Omega, |\Omega|)$, $C(p, \diam\Omega, |\Omega|)$, etc.
\vglue 0.2cm
The rest of the paper is organized as follows. In Section \ref{MA_sec}, we recall basic facts on the Monge-Amp\`ere equation. In Section \ref{est_sec}, we present various estimates related to the Monge-Amp\`ere operators %leading to sharp bounds for the Monge-Amp\`ere eigenvalue and solutions to 
and
degenerate Monge-Amp\`ere equations. Then we give the proof of Proposition \ref{detC}. In Section \ref{D_sec}, we study solutions to the degenerate Monge-Amp\`ere equations and their minimality with respect to certain Monge-Amp\`ere functional. We prove Theorem \ref{lam_thm} in Section \ref{sec_proof}.  Theorems \ref{lamBM} and \ref{extreme_thm} will be proved in Section \ref{iso_sec}. In Section \ref{gl_sec}, we prove Theorem \ref{Rlem0} restated as Theorem \ref{Rlem}.
\section{Preliminaries on the Monge-Amp\`ere equation}
\label{MA_sec}
In this section, we recall basic facts on the Monge-Amp\`ere equation. We refer the reader to the books by Guti\'errez \cite{G} and Figalli \cite{Fi} for more details.

Let $\Omega$ be an open convex domain of $\R^{n}$. We define
 the subdifferential of a convex function \(u:\Omega \to \R\) at $x\in\Omega$ by
 $$
\partial u (x):=\{p\in \R^{n}\,:\, u(y)\ge u(x)+p\cdot (y-x)\quad \forall\, y \in \Omega\}.
$$
\begin{defn} [The Monge-Amp\`ere measure]
\label{MAdef}
Let $u:\Omega\rightarrow \R$ be a convex function.
Given $E\subset \Omega$, we define
$$\p u(E) = \bigcup_{x\in E} \p u(x).$$
Let
$$Mu(E) = |\p u(E)|.$$
Then (see \cite[Theorem 1.1.13]{G}), $Mu: \mathcal{S}\rightarrow R\cup\{\infty\}$ is a measure, finite on compact sets where $\mathcal{S}$ is the Borel $\sigma$-algebra defined by
$$\mathcal{S}=\{E\subset \Omega: \p u(E) ~\text{is Lebesgue measurable}\}.$$
$Mu$ is called the Monge-Amp\`ere measure associated with the convex function $u$.
\end{defn}

If \(u\in C^{2}(\Omega)\), then we can show using a change of variables that 
$
Mu=\det D^{2} u(x)\,dx$ in $\Omega$.

\begin{defn}[Aleksandrov solutions]\label{ch1:def:alesol}Given an open convex set \(\Omega\) and a Borel measure \(\mu\) on \(\Omega\), a convex
function \(u:\Omega \to \R\) is called an \emph{Aleksandrov solution} to the Monge-Amp\`ere equation
$$
\det D^{2} u =\mu,
$$
if $\mu=Mu$ as Borel measures.
\end{defn}

When \(\mu=f\,dx\) we will simply say that \(u\) solves 
\begin{equation*}
\det D^2 u=f.
\end{equation*}

In what follows, by a slight abuse of notation, we will use $\det D^2 u$ to denote the Monge-Amp\`ere measure $Mu$ for a general convex function $u$. Thus, for all Borel set 
$E\subset\Omega$,
$$\int_{E}\det D^2u~ dx = Mu(E)= |\p u(E)|$$
and
$$\int_E |u|\det D^2 u~dx=\int_E |u| dMu.$$

The basic existence and uniqueness result for solutions to the Dirichlet problem with zero boundary data is given in the following (see \cite[Theorem 1.6.2]{G},
\cite[Theorem 1]{Har1} and \cite[Theorem 2.13]{Fi}):
\begin{thm}[The Dirichlet problem] 
Let $\Omega$ be a bounded open convex domain in $\R^n$, and let $\mu$ be a nonnegative Borel measure in $\Omega$.
Then there exists a unique convex function $u\in C(\overline{\Omega})$ that is an Aleksandrov solution of
\begin{equation}
\label{MA_Dircase}
 \left\{
 \begin{alignedat}{2}
   \det D^{2} u~&=\mu \h~&&\text{in} ~\Omega, \\\
u &=0\h~&&\text{on}~\p \Omega.
 \end{alignedat}
 \right.
\end{equation}
\label{exi_thm}
\end{thm}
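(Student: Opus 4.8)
The plan is to derive uniqueness from the comparison principle for the Monge-Amp\`ere operator and to obtain existence by approximating $\Omega$ from the outside by strictly convex domains, where the Dirichlet problem is classical, keeping everything under control with the Aleksandrov maximum principle. For uniqueness, the tool is the comparison principle: if $u,v\in C(\overline G)$ are convex on a bounded open set $G$, $u\ge v$ on $\p G$, and $Mu\le Mv$ in $G$, then $u\ge v$ in $G$. Granting it, two Aleksandrov solutions $u_1,u_2\in C(\overline\Omega)$ of (\ref{MA_Dircase}) have $Mu_1=Mu_2=\mu$ and $u_1=u_2=0$ on $\p\Omega$, so applying the comparison principle to $(u_1,u_2)$ and to $(u_2,u_1)$ gives $u_1\equiv u_2$. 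To prove the comparison principle one argues by contradiction: if $\min_{\overline\Omega}(u-v)<\min_{\p\Omega}(u-v)$, a connected component $G$ of a suitable sublevel set $\{u<v+c\}$ is compactly contained in $\Omega$, with $u\le v+c$ in $G$ and $u=v+c$ on $\p G$; replacing $v$ by $v_\e(x)=v(x)+\e(|x-\bar x|^2-R^2)$ with $B_R(\bar x)\supset\Omega$ keeps the analogous set $G_\e\Subset\Omega$ nonempty and, by the superadditivity of the Monge-Amp\`ere measure, adds at least $(2\e)^n\,dx$ to $Mv$. The monotonicity of the subdifferential image for convex functions agreeing on $\p G_\e$ (namely $\p(v_\e+c)(G_\e)\subseteq\p u(G_\e)$, since $u\le v_\e+c$ there) gives $Mv_\e(G_\e)\le Mu(G_\e)\le Mv(G_\e)$, contradicting the strict gain $Mv_\e(G_\e)\ge Mv(G_\e)+(2\e)^n|G_\e|$.

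For existence, pick bounded strictly convex open sets $\Omega_j$ with $\overline\Omega\subset\Omega_{j+1}\Subset\Omega_j$ and $\Omega_j\to\Omega$ in the Hausdorff distance (such a nested outer approximation is standard), and extend $\mu$ by zero to each $\Omega_j$. On the strictly convex domain $\Omega_j$ the problem (\ref{MA_Dircase}) has a unique convex solution $u_j\in C(\overline{\Omega_j})$ with $u_j=0$ on $\p\Omega_j$ --- the classical case, proved by Perron's method with barriers built from the strictly supporting hyperplanes of $\p\Omega_j$ (see \cite{G}). Each $u_j\le 0$ and, unless $\mu\equiv 0$ (a trivial case), $u_j<0$ throughout $\Omega_j$; since $\p\Omega_{j+1}\subset\Omega_j$, the comparison principle with equal Monge-Amp\`ere measures forces $u_j\le u_{j+1}$ in $\Omega_{j+1}$. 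Hence $\{u_j\}$ is nondecreasing on $\Omega$ and lies between $u_1$ and $0$, so it converges pointwise, and therefore locally uniformly in $\Omega$, to a finite convex function $u\le 0$. Now $Mu=\mu$ in $\Omega$ follows from the weak-$*$ continuity of the Monge-Amp\`ere measure under locally uniform convergence of convex functions ($Mu_j\rightharpoonup Mu$ in $\Omega$, while $Mu_j=\mu$ for every $j$), and $u=0$ on $\p\Omega$ follows from the Aleksandrov maximum principle on $\Omega_j$: for $x\in\Omega$ one has $|u_j(x)|^n\le C(n)(\diam\Omega_j)^{n-1}\dist(x,\p\Omega_j)\,\mu(\Omega)$, and for $z\in\p\Omega$, choosing $j=j(x)$ with the Hausdorff distance from $\Omega_{j(x)}$ to $\Omega$ at most $|x-z|$ yields $0\ge u(x)\ge u_{j(x)}(x)\ge -C(n,\diam\Omega,\mu(\Omega))\,|x-z|^{1/n}$, which tends to $0$ as $x\to z$. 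Thus $u\in C(\overline\Omega)$ is an Aleksandrov solution of (\ref{MA_Dircase}).

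The two steps that need genuine care are the identity $Mu=\mu$ in the limit --- which rests on the stability/weak-continuity of the Monge-Amp\`ere operator and cannot be read off from pointwise convergence alone --- and, for a general (possibly non-strictly-convex, e.g.\ polytopal) convex domain $\Omega$, attaining the zero boundary data. The outer approximation by strictly convex domains, combined with the uniform-in-$j$ Aleksandrov estimate, is exactly what resolves the latter; this is, beyond the strictly convex case treated in \cite{G}, the content of \cite{Har1}.
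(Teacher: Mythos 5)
The paper does not actually supply a proof of Theorem~\ref{exi_thm}: it quotes it from \cite[Theorem 1.6.2]{G} for the strictly convex case and from \cite[Theorem 1]{Har1}, \cite[Theorem 2.13]{Fi} for general convex domains, and remarks explicitly that the extra content beyond the strictly convex case is precisely what \cite{Har1} and \cite{Fi} provide. Your argument is a correct reconstruction of that outer-approximation proof, and it deals properly with the two points that genuinely require care: the identity $Mu=\mu$ in the limit (via weak-$*$ stability of the Monge--Amp\`ere measure under locally uniform convergence of convex functions), and the attainment of the zero boundary value on a possibly non--strictly-convex $\Omega$ (via the monotone sequence $u_j\nearrow u$ together with the uniform Aleksandrov bound, picking $j=j(x)$ so that $\dist(x,\p\Omega_{j(x)})\lesssim\dist(x,\p\Omega)$). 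The comparison-principle sketch for uniqueness is likewise the standard Aleksandrov argument; in fact the paper already records that principle separately as Lemma~\ref{comp_prin}, so one could simply invoke it.

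Two small points are worth making explicit if this were to be written out in full. First, the measure $\mu$ must be finite on $\Omega$; you use $\mu(\Omega)<\infty$ silently in the Aleksandrov estimate, and the cited references do make this hypothesis (an infinite measure admits no solution in $C(\overline{\Omega})$). Second, the existence of a nested sequence of open bounded \emph{strictly convex} sets $\Omega_j\supset\overline{\Omega}$ converging to $\Omega$ in Hausdorff distance is true but not quite a one-liner: $\overline{\Omega}+\overline{B}_{1/j}$ need not be strictly convex (take $\Omega$ a square), so one should appeal to the density of smooth, uniformly convex bodies in the Hausdorff metric (see e.g.\ \cite{Sch}) and then adjust to obtain a decreasing family containing $\overline{\Omega}$. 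Neither point is a gap in the idea, just something to state.
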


We note that in \cite[Theorem 1.6.2]{G}, the domain $\Omega$ in Theorem \ref{exi_thm} was required to be strictly convex. However, the strict convexity of $\Omega$ can be removed
for the Dirichlet problem with zero boundary data as shown in \cite[Theorem 1]{Har1} and \cite[Theorem 2.13]{Fi}.

The following is the celebrated Aleksandrov's maximum principle (see \cite[Theorem 2.8]{Fi} and \cite[Theorem 1.4.2]{G}):

\begin{thm}[Aleksandrov's maximum principle]\label{Alex_thm} Let  $\Omega\subset\R^n$ be an open, bounded and convex domain. Let $u\in C(\overline{\Omega})$ be a convex function.
If $u=0$ on $\p \Omega$, then
$$
|u(x)|^{n}\le C(n)(\emph{diam}\Omega)^{n-1}\emph{dist}(x,\partial \Omega)\int_{\Omega}\det D^2 u~dx\qquad \text{ for all } x\in\Omega.
$$
 \end{thm}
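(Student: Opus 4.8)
The plan is to run the classical ``sliding plane'' argument via a cone comparison, bounding from below the Lebesgue measure of the image of the subdifferential, which equals $\int_{\Omega}\det D^2u\,dx$ by the very definition of the Monge-Amp\`ere measure (Definition~\ref{MAdef}). Fix $x_0\in\Omega$; if $u(x_0)=0$ there is nothing to prove, so assume $u(x_0)<0$ and write $a=|u(x_0)|$, $d=\dist(x_0,\p\Omega)>0$ and $D=\diam\Omega$.

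First I would introduce the cone $v\colon\ov\Omega\to\R$ with vertex $(x_0,u(x_0))$ and base $\p\Omega\times\{0\}$, i.e. the function vanishing on $\p\Omega$, equal to $u(x_0)$ at $x_0$, and affine along each ray issuing from $x_0$; it is convex, being a cone over a convex base, and $v(x_0)=u(x_0)$. The heart of the matter is a lower bound for $|\p v(x_0)|$. Since $v$ and any affine function $\ell_p(z)=u(x_0)+p\cdot(z-x_0)$ are both affine along rays from $x_0$ and agree at $x_0$, one has $\ell_p\le v$ on $\ov\Omega$ if and only if $\ell_p\le 0$ on $\p\Omega$, that is, $p\in\p v(x_0)$ if and only if $p\cdot(z-x_0)\le a$ for all $z\in\ov\Omega$. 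Picking a nearest boundary point $y_0\in\p\Omega$, setting $\nu=(y_0-x_0)/d$, and using that $\ov\Omega$ lies both in the half-space $\{z:(z-x_0)\cdot\nu\le d\}$ and in $\ov{B(x_0,D)}$, every $p=s\nu+q$ with $s\ge 0$, $q\perp\nu$ and $sd+|q|D\le a$ satisfies this constraint; integrating in $s$ gives
\[
|\p v(x_0)|\ \ge\ \int_0^{a/d}\big|\{q\perp\nu:\ |q|\le (a-sd)/D\}\big|\,ds\ =\ c(n)\,\frac{a^n}{D^{n-1}d}.
\]

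Next I would prove the inclusion $\p v(x_0)\subseteq\p u(\Omega)$. Given $p\in\p v(x_0)$, the continuous function $z\mapsto u(z)-p\cdot z$ attains its minimum over the compact set $\ov\Omega$ at some $x_1$. If $x_1\in\Omega$, then $p\in\p u(x_1)\subseteq\p u(\Omega)$. If instead $x_1\in\p\Omega$, then $u(x_1)=0$, and testing minimality at $z=x_0$ gives $p\cdot(x_1-x_0)\ge a$, while $p\in\p v(x_0)$ together with $v=0$ on $\p\Omega$ gives $p\cdot(x_1-x_0)\le a$; hence equality holds, which forces $u(x_0)-p\cdot x_0=u(x_1)-p\cdot x_1$, so $x_0$ is itself a minimizer and $p\in\p u(x_0)\subseteq\p u(\Omega)$. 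In either case $p\in\p u(\Omega)$.

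Combining the two steps, and recalling that $\p u(\Omega)$ is Lebesgue measurable with $|\p u(\Omega)|=Mu(\Omega)=\int_{\Omega}\det D^2u\,dx$, we obtain $\int_{\Omega}\det D^2u\,dx\ge c(n)\,a^n/(D^{n-1}d)$, which is the asserted inequality with $C(n)=1/c(n)$ after rearranging. I expect the only genuinely delicate point to be the inclusion $\p v(x_0)\subseteq\p u(\Omega)$ — specifically, ruling out that the supporting plane of slope $p$ touches the graph of $u$ only along $\p\Omega$ — which is handled by the equality case above; the lower bound for $|\p v(x_0)|$ is then just a direct volume computation, and the convexity and basic properties of the cone $v$ are elementary.
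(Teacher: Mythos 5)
Your proof is correct, and it is essentially the standard argument: compare with the cone $v$ having vertex $(x_0,u(x_0))$ and base $\partial\Omega\times\{0\}$, show the inclusion $\partial v(x_0)\subseteq\partial u(\Omega)$, and bound $|\partial v(x_0)|$ from below by the volume of an explicit solid cone built from the supporting half-space at a nearest boundary point and the ball of radius $\diam\Omega$. The paper itself does not prove Theorem~\ref{Alex_thm} --- it is recalled as a preliminary with citations to Figalli and Guti\'errez --- and your argument coincides with the proof given in those references, including the sliding-plane step in which you minimize $z\mapsto u(z)-p\cdot z$ over $\overline\Omega$ and rule out a boundary minimizer via the equality case. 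The volume computation $\int_0^{a/d}\omega_{n-1}\bigl((a-sd)/D\bigr)^{n-1}\,ds=\tfrac{\omega_{n-1}}{n}\,a^n/(D^{n-1}d)$ gives the explicit constant $C(n)=n/\omega_{n-1}$, which is consistent with what the cited sources produce.
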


We will also use the following comparison principle; see \cite[Theorem 2.10]{Fi} and \cite[Theorem 1.4.6]{G}.

\begin{lem}[Comparison principle]
\label{comp_prin}
 Let  $\Omega\subset\R^n$ be an open, bounded and convex domain. Let $u, v\in C(\overline{\Omega})$ be convex functions.
If $u\ge v$ on $\p \Omega$ and in the sense of Monge-Amp\`ere measures
$$
\det D^{2}u \leq \det D^{2}v\quad \text{in}~\Omega, 
$$
then $u\ge v$ in $\Omega$.
\end{lem}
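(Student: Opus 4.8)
\emph{Proof proposal.} The plan is to argue by contradiction: assume the open set $G_0:=\{x\in\Omega: u(x)<v(x)\}$ is nonempty. The two ingredients are (a) the classical ``inclusion lemma'': if $A\subset\R^n$ is bounded and open and $w_1,w_2$ are convex on a neighborhood of $\overline{A}$ with $w_1\ge w_2$ on $\overline{A}$ and $w_1=w_2$ on $\p A$, then $\p w_1(A)\subseteq\p w_2(A)$, hence $\int_A\det D^2 w_1\,dx\le\int_A\det D^2 w_2\,dx$; and (b) a small elliptic perturbation of $v$ that turns the nonstrict inequality $\det D^2 u\le\det D^2 v$ into a strict one while simultaneously separating the bad set from $\p\Omega$. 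The inclusion lemma is a one-line consequence of the fact that a local supporting plane of a convex function is global: given $p\in\p w_1(x_0)$ with $x_0\in A$, let $\bar x$ minimize $w_2-p\cdot x$ over $\overline{A}$; if $\bar x\in\p A$ then the chain $w_2(\bar x)-p\cdot\bar x=w_1(\bar x)-p\cdot\bar x\ge w_1(x_0)-p\cdot x_0\ge w_2(x_0)-p\cdot x_0$ forces the minimum of $w_2-p\cdot x$ to be attained at $x_0\in A$ too, so in all cases $p\in\p w_2(A)$.

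\emph{The perturbation.} Fix $z\in\R^n$ and $R>0$ with $\overline{\Omega}\subset B_R(z)$, and for $\e>0$ set $v_\e(x):=v(x)+\frac{\e}{2}(|x-z|^2-R^2)$. Then $v_\e$ is convex with $v_\e<v$ on $\overline{\Omega}$ (strictly, since $|x-z|<R$ there), hence $u\ge v>v_\e$ on $\p\Omega$; and since $D^2(\frac{\e}{2}|x-z|^2)=\e I$, the elementary inequality $\det(A+\e I)\ge\det A+\e^n$ for symmetric $A\ge 0$ (extended to general convex functions by mollification and weak convergence of Monge-Amp\`ere measures; see \cite{G,Fi}) gives $\det D^2 v_\e\ge\det D^2 v+\e^n\,dx\ge\det D^2 u+\e^n\,dx$ as Borel measures on $\Omega$. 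Finally, if $u(x_0)<v(x_0)$ then $v_\e(x_0)\to v(x_0)$ as $\e\downarrow 0$, so $u(x_0)<v_\e(x_0)$ for all small $\e$; fix one such $\e$. Put $G:=\{x\in\Omega:u(x)<v_\e(x)\}$. Since $u>v_\e$ on $\p\Omega$, the set $\{x\in\overline{\Omega}:u(x)\le v_\e(x)\}$ is compact and contained in $\Omega$, and it contains $\overline{G}$, so $\overline{G}\subset\subset\Omega$; moreover $G$ is open and nonempty, $u<v_\e$ in $G$, and $u=v_\e$ on $\p G$.

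\emph{Conclusion and the hard point.} Apply the inclusion lemma on $A=G$ with $w_1=v_\e\ge u=w_2$ on $\overline{G}$ and $v_\e=u$ on $\p G$: this yields $\int_G\det D^2 v_\e\,dx\le\int_G\det D^2 u\,dx$, the latter being finite because $\overline{G}$ is compact in $\Omega$ and $Mu$ is finite on compacta. Combined with the measure inequality, which gives $\int_G\det D^2 v_\e\,dx\ge\int_G\det D^2 u\,dx+\e^n|G|$, we obtain $\e^n|G|\le 0$, i.e.\ $|G|=0$, contradicting that $G$ is a nonempty open set. Hence $G_0=\emptyset$, that is $u\ge v$ in $\Omega$. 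I expect the only genuinely delicate inputs to be the two standard facts, namely the inclusion lemma and the monotonicity $M\bigl(v+\frac{\e}{2}|x-z|^2\bigr)\ge Mv+\e^n\,dx$; beyond these I anticipate no obstacle. I note that convexity of $\Omega$ is not actually used here — only its boundedness matters — since the inclusion lemma is applied on the generally non-convex sublevel set $G$.
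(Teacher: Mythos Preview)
Your argument is correct. The paper itself does not prove this lemma; it simply cites it as the standard comparison principle from \cite[Theorem 2.10]{Fi} and \cite[Theorem 1.4.6]{G}, and your proof is essentially the one found in those references --- the quadratic perturbation $v_\e$ to force $\overline{G}\subset\subset\Omega$ and a strict measure gap, combined with the subdifferential inclusion lemma on $G$, is exactly the classical route. Your closing remark that the convexity of $\Omega$ plays no role is also accurate.
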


We have the following compactness of solutions to the Monge-Amp\`ere equation; see \cite[Corollary 2.12]{Fi} and \cite[Lemma 5.3.1]{G}.

\begin{thm}[Compactness of solutions to the Monge-Amp\`ere equation]
\label{compact_thm}
Let $\{\Omega_k\}_{k=1}^{\infty}\subset \R^n$ be a sequence of open bounded convex domains that converges to an open bounded convex domain $\Omega$
in the Hausdorff distance. 
Let $\{\mu_k\}_{k=1}^{\infty}$ be a sequence of 
nonnegative Borel measures with $\sup_k\mu_k(\Omega_k)<\infty$ and which
converges weakly$^\ast$ to a Borel measure $\mu$.
For each $k$, let $u_k\in C(\overline{\Omega_k}) $ be the convex Aleksandrov solution of
\begin{equation*}
 \left\{
 \begin{alignedat}{2}
   \det D^{2} u_k~&=\mu_k \h~&&\text{in} ~\Omega_k, \\\
u_k &=0\h~&&\text{on}~\p \Omega_k.
 \end{alignedat}
 \right.
\end{equation*}
Then $u_k$ converges locally uniformly in $\Omega$ to the convex Aleksandrov solution of
\begin{equation*}
 \left\{
 \begin{alignedat}{2}
   \det D^{2} u~&=\mu \h~&&\text{in} ~\Omega, \\\
u &=0\h~&&\text{on}~\p \Omega.
 \end{alignedat}
 \right.
\end{equation*}
\end{thm}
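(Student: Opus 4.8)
The plan is to combine three standard ingredients — Aleksandrov's maximum principle, a compactness argument for uniformly bounded convex functions, and the weak$^\ast$ continuity of the Monge-Amp\`ere operator — with the uniqueness part of Theorem~\ref{exi_thm}. First I would record the uniform a priori bounds. Since $\Omega_k\to\Omega$ in the Hausdorff distance, there are a ball $B_R$ and a constant $d$ with $\Omega_k\subset B_R$ and $\diam\Omega_k\le d$ for all $k$, and $M:=\sup_k\mu_k(\Omega_k)<\infty$ by hypothesis. Each $u_k$ is convex and vanishes on $\p\Omega_k$, hence $u_k\le 0$ in $\Omega_k$, and Aleksandrov's maximum principle (Theorem~\ref{Alex_thm}) gives $|u_k(x)|^n\le C(n)\,d^{\,n-1}M\,\dist(x,\p\Omega_k)$ for $x\in\Omega_k$; in particular $\|u_k\|_{L^\infty(\Omega_k)}\le\Lambda$ with $\Lambda=\Lambda(n,d,M)$ independent of $k$.

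Next I would establish precompactness. Fix a compact set $K\Subset\Omega$. By Hausdorff convergence, $K\subset\Omega_k$ and $\dist(K,\p\Omega_k)\ge\tfrac12\dist(K,\p\Omega)>0$ for all large $k$; and a convex function bounded by $\Lambda$ on $\Omega_k$ is Lipschitz on $K$ with constant at most $4\Lambda/\dist(K,\p\Omega_k)$. Thus $\{u_k\}$ is equi-Lipschitz on each such $K$; exhausting $\Omega$ by compact convex sets and applying Arzel\`a--Ascoli with a diagonal extraction produces a subsequence $u_{k_j}$ converging locally uniformly in $\Omega$ to a convex function $u$ with $|u|\le\Lambda$.

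Then I would pass to the limit in the equation and handle the boundary values. By the weak$^\ast$ continuity of the Monge-Amp\`ere operator along locally uniformly convergent sequences of convex functions (see \cite[Lemma~1.2.3]{G}), for every $\varphi\in C_c(\Omega)$ one has $\int\varphi\,dMu_{k_j}\to\int\varphi\,dMu$ (note that $\operatorname{supp}\varphi\Subset\Omega_{k_j}$ for $j$ large). Since $Mu_{k_j}=\mu_{k_j}$ in $\Omega_{k_j}$ and $\mu_{k_j}\rightharpoonup\mu$, this forces $\int\varphi\,dMu=\int\varphi\,d\mu$ for all $\varphi\in C_c(\Omega)$, hence $\det D^2u=\mu$ in $\Omega$. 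Moreover, since $\dist(x,\p\Omega_k)\to\dist(x,\p\Omega)$ for each $x\in\Omega$ (an elementary consequence of Hausdorff convergence), letting $j\to\infty$ in the pointwise bound of the first step gives $|u(x)|^n\le C(n)d^{\,n-1}M\,\dist(x,\p\Omega)$, so $u(x)\to 0$ as $x\to\p\Omega$; extending $u$ by $0$ on $\p\Omega$ makes $u\in C(\overline\Omega)$ the Aleksandrov solution of the Dirichlet problem with datum $\mu$, unique by Theorem~\ref{exi_thm}. As this limit does not depend on the chosen subsequence, the precompactness above forces the whole sequence $u_k$ to converge locally uniformly in $\Omega$ to $u$.

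The main obstacle is the weak$^\ast$ continuity step: identifying $\lim_j Mu_{k_j}$ with $Mu$ along a merely locally uniformly convergent sequence of convex functions defined on the varying domains $\Omega_k$ — the one genuinely nontrivial ingredient, which rests on the upper semicontinuity of the subdifferential map together with a Vitali-type covering argument. The remaining steps only assemble Aleksandrov's estimate, elementary convexity (boundedness $\Rightarrow$ local Lipschitz bounds, and the Hausdorff-convergence comparisons of $\Omega_k$ with $\Omega$), Arzel\`a--Ascoli, and the uniqueness in Theorem~\ref{exi_thm}.
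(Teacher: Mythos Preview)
Your argument is correct and follows the standard proof; note, however, that the paper does not actually prove Theorem~\ref{compact_thm} but merely cites it as a known preliminary result from \cite[Corollary~2.12]{Fi} and \cite[Lemma~5.3.1]{G}. The ingredients you assemble --- Aleksandrov's maximum principle for uniform $L^\infty$ and boundary-H\"older bounds, local Lipschitz estimates for convex functions plus Arzel\`a--Ascoli, weak$^\ast$ continuity of the Monge--Amp\`ere measure, and uniqueness of the Dirichlet solution --- are exactly those underlying the cited references, so your write-up is essentially a reconstruction of that standard argument rather than an alternative route.
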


We will use the following fact, which is a consequence of John's lemma (see also \cite[Lemma A.13]{Fi}, \cite[Theorem 1.8.2]{G} and \cite[Theorem 10.12.2]{Sch}).
\begin{lem}[John's lemma]
 \label{J_lem}
 Let $\Omega$ be an open, bounded and convex set in $\R^n$ with nonempty interior. Then there exists an affine transformation $T:\R^n\rightarrow \R^n$ with $\det T=1$ (such 
 a transformation is called unimodular) such that
 \begin{equation*}B_{R}\subset T(\Omega)\subset B_{nR}~\text{for some~}R>0.
 \end{equation*}
\end{lem}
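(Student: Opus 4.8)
The plan is to reduce Lemma~\ref{J_lem} to the classical John ellipsoid theorem, which is the substantive input; everything else is a bookkeeping argument with affine maps. First I would pass to the compact convex body $K:=\ov{\Omega}$, which has nonempty interior because $\Omega$ is bounded, open, convex and has nonempty interior, and invoke John's theorem in the form recorded in \cite[Theorem 10.12.2]{Sch} (see also \cite[Theorem 1.8.2]{G}): there is a (unique) ellipsoid $E$ of maximal volume contained in $K$, with some center $c\in\R^n$, and $K\subseteq c+n(E-c)$. The dilation factor $n$ here, rather than $\sqrt n$, is exactly what is available in the non-centrally-symmetric case, and the center $c$ of $E$ is in general not the centroid of $\Omega$; these two features are the only genuinely nontrivial points in the whole proof, and they are precisely what the cited references provide, so I expect no real obstacle beyond quoting the theorem correctly.

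Next I would turn the ellipsoid into a ball via an explicit affine map. I would write $E=c+A(\ov{B_1})$ for an invertible linear map $A$; after replacing $A$ by $A\circ\mathrm{diag}(-1,1,\dots,1)$ if necessary — which does not change the set $A(\ov{B_1})$ — I may assume $\det A>0$. Then I set $R:=(\det A)^{1/n}>0$ and define the affine map $T(y):=R\,A^{-1}(y-c)$. One checks $\det T=(\det A)\cdot(\det A)^{-1}=1$, so $T$ is unimodular, and a direct computation gives $T(E)=\ov{B_R}$ and $T\bigl(c+n(E-c)\bigr)=T\bigl(c+A(\ov{B_n})\bigr)=\ov{B_{nR}}$. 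Applying $T$ to the inclusions $E\subseteq K\subseteq c+n(E-c)$ therefore yields $\ov{B_R}\subseteq T(\ov{\Omega})\subseteq\ov{B_{nR}}$.

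Finally I would pass from closed sets back to the open set $\Omega$. Since $\Omega$ is convex with nonempty interior, $\mathrm{int}(\ov{\Omega})=\Omega$, and $T$ is a homeomorphism, so $T(\Omega)=\mathrm{int}\bigl(T(\ov{\Omega})\bigr)\subseteq\mathrm{int}(\ov{B_{nR}})=B_{nR}$. For the other inclusion, $E\subseteq\ov{\Omega}$ gives $\mathrm{int}(E)\subseteq\mathrm{int}(\ov{\Omega})=\Omega$, hence $B_R=\mathrm{int}(\ov{B_R})=\mathrm{int}(T(E))=T(\mathrm{int}(E))\subseteq T(\Omega)$. Combining the two gives $B_R\subseteq T(\Omega)\subseteq B_{nR}$ with $\det T=1$ and $R=(\det A)^{1/n}>0$, which is the assertion. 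The only small points needing a word of justification are the identity $\mathrm{int}(\ov{\Omega})=\Omega$ for convex sets with nonempty interior and the sign normalization $\det A>0$, and neither is difficult; the content of the lemma is carried entirely by John's theorem.
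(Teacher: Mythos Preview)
Your argument is correct and is exactly the intended one: the paper does not give a proof of Lemma~\ref{J_lem} at all, but merely records it as a consequence of John's ellipsoid theorem with references to \cite[Lemma A.13]{Fi}, \cite[Theorem 1.8.2]{G} and \cite[Theorem 10.12.2]{Sch}. Your write-up is precisely the natural way to flesh out that remark, and the small technical points you flag (the sign normalization for $\det A$ and the identity $\mathrm{int}(\ov{\Omega})=\Omega$ for open convex sets) are handled correctly.
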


We call an open convex set $\Omega$ {\it normalized} if it satisfies $B_{R}\subset \Omega\subset B_{nR}$ for some $R>0$.

Finally, we will use the following simple interior regularity result whose proof we include for reader's convenience.
\begin{prop}
\label{reg_prop}
  Let $0\leq p<\infty$ and let $\Omega$ be an open, bounded convex set in $\R^n$ with nonempty interior.
  Assume that $u\in C(\overline{\Omega})$ is a nonzero convex Aleksandrov solution  to the Dirichlet problem
\begin{equation*}
 \left\{
 \begin{alignedat}{2}
   \det D^{2} u~&=|u|^{p} \h~&&\text{in} ~\Omega, \\\
u &=0\h~&&\text{on}~\p \Omega.
 \end{alignedat}
 \right.
\end{equation*}
Then $u$ is strictly convex in $\Omega$ and $u\in C^{\infty}(\Omega)$.
\end{prop}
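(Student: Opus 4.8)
The plan is to prove Proposition \ref{reg_prop} by combining two facts about solutions of $\det D^2 u = |u|^p$ with zero boundary data: first, that $u$ is locally bounded away from zero on compact subsets of $\Omega$, and second, Caffarelli's regularity theory for the Monge-Amp\`ere equation, which requires both strict convexity and that the right-hand side be locally bounded between positive constants. The starting observation is that since $u$ is a nonzero convex function with $u=0$ on $\p\Omega$, we have $u<0$ throughout $\Omega$ (a convex function vanishing on the boundary that is not identically zero cannot attain the value $0$ at an interior point, by the convexity/minimum principle), so $\det D^2 u = |u|^p = (-u)^p > 0$ in $\Omega$. Moreover $\det D^2 u = (-u)^p$ is not the zero measure, and on any compact $K\Subset\Omega$ we have $0 < a_K \le (-u)^p \le b_K < \infty$ where $a_K = (\min_K(-u))^p$ and $b_K = (\max_{\overline\Omega}(-u))^p$ (finite since $u\in C(\overline\Omega)$); when $p=0$ we trivially have $\det D^2 u = 1$.

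Next I would establish strict convexity. The key input is the structure theorem on the set where a convex Aleksandrov solution fails to be strictly convex: if $u$ is not strictly convex at some point, there is a supporting hyperplane touching $u$ along a segment (in fact a convex set of dimension $\ge 1$) whose relative interior lies in $\Omega$ and whose endpoints reach $\p\Omega$; along the interior of this "contact set" the Monge-Amp\`ere measure must vanish. But we have just shown $\det D^2 u \ge a_K > 0$ on compact subsets, so $\det D^2 u$ cannot vanish on any set of positive $\mathcal{L}^n$-measure inside $\Omega$ — and the contact set, if it had an interior point where it meets a hyperplane along a lower-dimensional convex set, would force $Mu$ to concentrate or vanish in a way incompatible with $Mu = (-u)^p\,dx$ being a positive absolutely continuous measure with density bounded below on compacta. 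More precisely, I would invoke Caffarelli's theorem (see e.g. \cite{Fi}, \cite{G}) that if $\det D^2 u$ has density bounded below by a positive constant locally, then $u$ is strictly convex in $\Omega$; this is exactly the situation here after the bound $\det D^2 u \ge a_K$ on each $K\Subset\Omega$. One subtlety to handle is that $a_K$ depends on $\min_K(-u)$, which a priori could degenerate; but since $u$ is continuous and $u<0$ on the open set $\Omega$, $\min_K(-u)>0$ for every compact $K\subset\Omega$, so the bound is genuine on each compact set, which is all Caffarelli's local theorem needs.

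With strict convexity in hand, the smoothness follows by bootstrapping. Since $u$ is strictly convex and $\det D^2 u = (-u)^p$ with $(-u)$ continuous and locally bounded between positive constants, Caffarelli's interior $C^{1,\alpha}$ regularity gives $u\in C^{1,\alpha}_{\mathrm{loc}}(\Omega)$; then the right-hand side $(-u)^p$ is locally $C^{0,\alpha}$ (a composition of the $C^{0,\alpha}$ function $-u$ with the smooth map $t\mapsto t^p$ on the interval $[a_K,b_K]\subset(0,\infty)$ where $t^p$ is smooth), so by Caffarelli's $C^{2,\alpha}$ estimate $u\in C^{2,\alpha}_{\mathrm{loc}}(\Omega)$. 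Now the equation $\det D^2 u = (-u)^p$ is a uniformly elliptic, fully nonlinear (indeed, concave after the standard $\log\det$ reformulation) equation with $C^{0,\alpha}$-and-then-smoother right-hand side, so Schauder theory and elliptic bootstrapping upgrade $u$ successively to $C^{3,\alpha}_{\mathrm{loc}}$, $C^{4,\alpha}_{\mathrm{loc}}$, and ultimately $u\in C^\infty(\Omega)$. I expect the main obstacle — really the only nonroutine point — to be the careful justification that the Monge-Amp\`ere density is bounded below on compact subsets so that Caffarelli's strict convexity theorem applies; once strict convexity is secured, the rest is a standard elliptic bootstrap that I would only sketch, citing \cite{Fi} and \cite{G} for the relevant $C^{1,\alpha}$ and $C^{2,\alpha}$ interior estimates.
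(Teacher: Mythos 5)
Your overall strategy matches the paper's: establish strict convexity via Caffarelli's theory, then bootstrap to smoothness using Caffarelli's interior $C^{1,\alpha}$ and $C^{2,\alpha}$ estimates, noting that $t\mapsto t^p$ is smooth on $[a_K,b_K]\subset(0,\infty)$. The bootstrap part of your argument is correct and is exactly what the paper does.

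However, the strict convexity step has a genuine gap. You assert that ``if $\det D^2 u$ has density bounded below by a positive constant locally, then $u$ is strictly convex in $\Omega$.'' This is false for $n\geq 3$: Pogorelov's example $u(x)=|x'|^{2-2/n}(1+x_n^2)$ has $\det D^2 u$ bounded above and below by positive constants near the origin, yet $u$ vanishes (hence is linear) on the segment $\{x'=0\}$, so $u$ is not strictly convex. Caffarelli's strict convexity theorem always requires some boundary information (constant Dirichlet data, or $C^{1,\beta}$ boundary data) in addition to the two-sided bounds on the density; local interior bounds alone never suffice. The same issue infects your more informal claim that ``along the interior of this contact set the Monge--Amp\`ere measure must vanish'' --- that is again contradicted by Pogorelov's example, where the Monge--Amp\`ere density is uniformly positive on a neighborhood of the contact segment.

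The paper circumvents this by working on the sublevel sets $\Omega(\e)=\{x\in\Omega: u(x)\leq -\e\}$. Inside $\Omega(\e)$ one has both the pinching $\e^p\leq\det D^2 u\leq M^p$ \emph{and} the crucial constant boundary data $u=-\e$ on $\partial\Omega(\e)$, so Caffarelli's localization theorem (in the form that \emph{does} carry a boundary hypothesis, e.g.\ \cite[Corollary 5.2.2]{G} or \cite[Theorem 4.10]{Fi}) applies on $\Omega(\e)$ and yields strict convexity there; letting $\e\downarrow 0$ gives strict convexity on all of $\Omega$. An alternative repair, closer in spirit to your contact-set observation, would be to use only the global upper bound $\det D^2 u\leq M^p$ to conclude via Caffarelli's structure theorem that any nontrivial contact set $W=\{u=\ell\}$ has all its extreme points on $\partial\Omega$, then observe that since $u=0$ on $\partial\Omega$ the affine function $\ell$ must vanish at those extreme points, hence vanish on all of $\overline W$; but $\ell=u<0$ at an interior point of $W$, a contradiction. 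Either route closes the gap; what you cannot do is infer strict convexity from local two-sided bounds on the density alone.
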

We recall that a convex function $u$ on an open bounded convex domain $\Omega$ is said to be strictly convex in $\Omega$, if for any $x\in\Omega$ and $p\in\p u(x)$, 
$$u(z)> u(x) + p\cdot (z-x)~\text{for all~} z\in\Omega\backslash \{x\},$$
that is, any supporting hyperplane to $u$ touches its graph at only one point.
\begin{proof} Let $M=\|u\|_{L^{\infty}(\Omega)}$. For each $\e\in (0, M)$, let $\Omega(\e)=\{x\in\Omega: u(x)\leq -\e\}$. Since $u\in C(\overline{\Omega})$ is convex, the set $\Omega(\e)$ is convex
with nonempty interior. Let us denote $\Omega'=\Omega(\e)$ for brevity. 

Since $\e^p\leq \det D^2 u= |u|^p\leq M^p$ in $\Omega'$ and $u=-\e$ on $\p\Omega'$, the function $u$ is strictly convex in $\Omega'$ by the
localization theorem of Caffarelli \cite{C1} (see also \cite[Corollary 5.2.2]{G} and \cite[Theorem 4.10]{Fi}). Thus, by Caffarelli's $C^{1,\alpha}$ estimates \cite{C3}
(see also \cite[Theorem 5.4.8]{G} and \cite[Theorem 4.20]{Fi}), 
$u\in C^{1,\alpha}_{loc}(\Omega')$ for some $\alpha\in (0, 1)$ depending only on $n, p, \e$ and $M$. Now, using Caffarelli's $C^{2,\alpha}$ estimates \cite{C2}, we have
$u\in C^{2,\alpha}_{loc}(\Omega')$. In the interior of $\Omega'$, the equation $\det D^2 u= |u|^p$ now becomes uniformly elliptic with $C^{2,\alpha}$ right hand side.
By a simple bootstrap argument, we
have $u\in C^{\infty}_{loc}(\Omega')$. Since $\e\in (0, M)$ is arbitrary, we conclude $u\in C^{\infty}(\Omega)$ and $u$ is strictly convex in $\Omega$.
\end{proof}

\section{Estimates}
\label{est_sec}
\subsection{Estimates for the Monge-Amp\`ere eigenvalue}
In this subsection, we establish various estimates that will give the optimal bounds (up to a bounded constant depending only on the dimension) 
from below and above for $\lambda[\cdot]$ defined by (\ref{lam_def}) on general open bounded convex domains. These estimates equally apply to 
the Monge-Amp\`ere eigenvalue $\lambda(\cdot)$ of open, bounded, smooth, and uniformly convex domains as defined in (\ref{EP}).
\begin{lem} 
\label{Alem}
Let $\Omega$ be an open, bounded convex domain in $\R^n$. Let $p\geq 0$.\\
(i)  Let $u\in C(\overline{\Omega})$ be a convex function in $\Omega$ with $u=0$ on $\p\Omega$, and $\|u\|_{L^{\infty}(\Omega)}=1$. Then 
$$c(n,p) |\Omega| \leq \int_{\Omega} |u|^{p}~dx\leq
|\Omega|$$
and
$$\int_{\Omega} (-u) \det D^2 u~dx\geq c(n) |\Omega|^{-1}.$$
(ii) There exists a convex function $u\in C(\overline{\Omega})\cap C^{\infty}(\Omega)$ with $u=0$ on $\p\Omega$, and $\|u\|_{L^{\infty}(\Omega)}=1$ such that 
$$ \int_{\Omega} (-u) \det D^2 u~dx
\leq C(n) |\Omega|^{-1}.$$
If $\Omega$ is open, bounded, smooth, and uniformly convex then we can choose $u$ to satisfy additionally $u\in C^{0, 1}(\overline{\Omega})$.\\
(iii) Let $M>0$ be a positive constant. Assume that $u\in C(\overline{\Omega})$ is a nonzero convex Aleksandrov solution  to the Dirichlet problem
\begin{equation*}
 \left\{
 \begin{alignedat}{2}
   \det D^{2} u~&=M|u|^{p} \h~&&\text{in} ~\Omega, \\\
u &=0\h~&&\text{on}~\p \Omega.
 \end{alignedat}
 \right.
\end{equation*}
Then 
\begin{equation*}
 c(n, p)|\Omega|^{-2}\leq M\|u\|^{p-n}_{L^{\infty}(\Omega)}\leq C(n, p)|\Omega|^{-2}.
\end{equation*}

\end{lem}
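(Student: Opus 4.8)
The plan is to treat the three parts in order, each resting on the tools collected in Section \ref{MA_sec}.

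For part (i), the upper bound $\int_\Omega |u|^p\, dx \le |\Omega|$ is immediate from $\|u\|_{L^\infty(\Omega)} = 1$. For the lower bound, first I would invoke John's lemma (Lemma \ref{J_lem}): since $\lambda[\cdot]$ and the quantities in question are invariant under unimodular affine maps, I may assume $B_R \subset \Omega \subset B_{nR}$ for some $R>0$, so that $|\Omega| \sim R^n$ up to dimensional constants. Now pick a point $x_0\in\Omega$ where $|u(x_0)| = 1$. By convexity, $u$ lies below the cone with vertex $(x_0,-1)$ and base $\partial\Omega$, so on a fixed fraction of $\Omega$ (say a ball of radius $cR$ around a suitable point on the segment from $x_0$ toward the ``far'' part of $\Omega$) one has $|u| \ge c(n)$; integrating gives $\int_\Omega |u|^p\,dx \ge c(n,p)\,R^n \ge c(n,p)\,|\Omega|$. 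For the second estimate, I would use Aleksandrov's maximum principle (Theorem \ref{Alex_thm}): since $|u(x_0)|^n \le C(n)(\diam\Omega)^{n-1}\dist(x_0,\partial\Omega)\int_\Omega \det D^2 u\,dx$ and, with the normalization, $\diam\Omega \sim R$ while $\dist(x_0,\partial\Omega)\lesssim R$, taking $x_0$ with $|u(x_0)|=1$ yields $\int_\Omega \det D^2 u\,dx \ge c(n) R^{-n} \gtrsim c(n)|\Omega|^{-1}$; combining this with $|u|\le 1$ is the wrong direction, so instead I would use $\int_\Omega(-u)\det D^2 u\,dx \ge \int_{\Omega'}(-u)\det D^2 u\,dx$ where $\Omega' = \{u < -1/2\}$, on which $-u \ge 1/2$ and $\det D^2 u(\Omega') = |\partial u(\Omega')| \ge c(n)R^{-n}$ by the Aleksandrov estimate applied on a section — so $\int_\Omega(-u)\det D^2 u\,dx \ge c(n)R^{-n} \gtrsim c(n)|\Omega|^{-1}$.

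For part (ii), after the same unimodular reduction to $B_R\subset\Omega\subset B_{nR}$, I would exhibit an explicit competitor: take $u$ to be (a smooth convex regularization near $\partial\Omega$ of) the solution of $\det D^2 u = f$, $u=0$ on $\partial\Omega$, with $f$ a suitable constant, normalized so $\|u\|_{L^\infty}=1$; alternatively, and more concretely, one can take $u(x) = (|x|^2/(nR)^2 - 1)$-type paraboloid restricted appropriately, or better, the function whose graph is (a mollification of) the convex hull of $(0,-1)$ and $\partial\Omega\times\{0\}$. For the cone-like competitor, $\det D^2 u\,dx$ is a point mass of size $|\partial u(\{x_0\})| \sim R^{-n}$ concentrated near the vertex, and $-u \le 1$, so $\int_\Omega(-u)\det D^2 u\,dx \le C(n)R^{-n} \lesssim C(n)|\Omega|^{-1}$. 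To get $u\in C^\infty(\Omega)$ one mollifies (the strict convexity plus Caffarelli regularity, as in Proposition \ref{reg_prop}, can also be used on an honest solution with bounded-away-from-zero-and-infinity right-hand side). When $\partial\Omega$ is smooth and uniformly convex, the solution $u$ of $\det D^2 u = 1$ with zero boundary data belongs to $C^{1,1}(\overline\Omega) \subset C^{0,1}(\overline\Omega)$ by classical theory, and after rescaling it satisfies the claimed bound.

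For part (iii), I would simply feed the normalization into parts (i) and (ii). Write $\|u\|_{L^\infty(\Omega)} = m$ and set $\tilde u = u/m$, so $\|\tilde u\|_{L^\infty} = 1$ and $\det D^2 \tilde u = m^{-n}\det D^2 u = m^{-n} M |u|^p = M m^{p-n}|\tilde u|^p$. Then
\[
M m^{p-n} \int_\Omega (-\tilde u)^{p+1}\,dx = \int_\Omega (-\tilde u)\,\det D^2\tilde u\,dx.
\]
By part (i), the left side is between $M m^{p-n} c(n,p)|\Omega|$ and $M m^{p-n}|\Omega|$, and the right side is $\ge c(n)|\Omega|^{-1}$; this gives the lower bound $M m^{p-n} \ge c(n,p)|\Omega|^{-2}$. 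For the upper bound I cannot use the minimizing property directly for a general solution, so instead I would argue: by the variational-type comparison, the minimizer $w$ of $\int(-w)\det D^2 w / \int(-w)^{p+1}$ (or simply the competitor from part (ii)) shows that the infimum of that ratio is $\le C(n)|\Omega|^{-2}$ after normalizing competitors to $L^\infty$-norm $1$ and using $\int(-w)^{p+1} \ge c(n,p)|\Omega|$; but for the given solution $u$ itself, multiplying the equation by $(-u)$ and integrating shows $M m^{p-n} \int(-\tilde u)^{p+1} = \int(-\tilde u)\det D^2\tilde u$, and the right-hand side is $\le \int_\Omega \det D^2 \tilde u\, dx = |\partial\tilde u(\Omega)| \le C(n)R^{-n}$ — here I use that $\tilde u$ maps into $[-1,0]$ so its gradient image is contained in a ball of radius $C(n)/R$ (slopes are controlled by $\|\tilde u\|_{L^\infty}/\dist(\cdot,\partial\Omega)$ together with the outer radius $nR$) — hence $M m^{p-n} \le C(n)R^{-n}/(c(n,p)|\Omega|) \le C(n,p)|\Omega|^{-2}$.

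The main obstacle I anticipate is the gradient-image bound $|\partial\tilde u(\Omega)| \le C(n)|\Omega|^{-1}$ for the normalized solution in part (iii): one must be careful that, although $\tilde u$ has $L^\infty$-norm $1$, its Lipschitz norm can blow up near $\partial\Omega$ for non-smooth domains, so the containment $\partial\tilde u(\Omega) \subset B_{C(n)/R}$ needs justification — this follows from the fact that for a convex function vanishing on $\partial\Omega$ with $B_R\subset\Omega$, any subgradient $p\in\partial\tilde u(x)$ satisfies $|p| \le \|\tilde u\|_{L^\infty}/\dist(x,\partial B_R \text{ in direction } p) \le C(n)/R$ when measured against the inner ball, more precisely one uses that the supporting plane at $x$ stays below $\tilde u$ hence below $0$ on $\partial\Omega$, forcing $|p|\,\mathrm{(inradius)} \le \|\tilde u\|_{L^\infty}$. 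Getting this clean, together with the correct placement of the point $x_0$ and the section $\Omega'$ in part (i) so that the Aleksandrov estimate bites, are the only genuinely technical points; everything else is assembly of Theorems \ref{Alex_thm}, \ref{exi_thm} and Lemma \ref{J_lem}.
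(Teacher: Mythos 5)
Parts (i) and (ii) are essentially the paper's argument: normalize by John's lemma, use the cone estimate plus Aleksandrov's maximum principle applied on the sublevel set $\{u\le -1/2\}$ for (i), and the solution of $\det D^2 v = R^{-2n}$ (rescaled so $\|u\|_{L^\infty}=1$) as the competitor for (ii). The lower bound in (iii) is also the same as the paper.

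The upper bound in (iii), however, has a genuine gap — and it is exactly at the point you flag as ``the main obstacle.'' You claim
$\partial\tilde u(\Omega)\subset B_{C(n)/R}$, i.e.\ that the slopes of a bounded convex function vanishing on $\partial\Omega$ are uniformly bounded by $C(n)/R$. This is false. The justification ``the supporting plane at $x$ stays below $0$ on $\partial\Omega$, forcing $|p|\cdot(\mathrm{inradius})\le \|\tilde u\|_{L^\infty}$'' does not follow: if $p\in\partial\tilde u(x)$, the supporting-plane inequality gives $p\cdot(y-x)\le -\tilde u(x)\le 1$ for $y\in\overline\Omega$, but when $x$ is close to $\partial\Omega$ (or far from the center of the John ball $B_R$), there need not be any $y\in\Omega$ at distance $\sim R$ from $x$ in the direction of $p$, so one cannot extract $|p|\le C/R$. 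Concretely, already on $\Omega=B_1$ the convex function $\tilde u(x)=-(1-|x|^2)^{1/2}$ vanishes on $\partial B_1$, has $\|\tilde u\|_{L^\infty}=1$, yet $\partial\tilde u(B_1)=\R^n$, so $\int_\Omega\det D^2\tilde u=\infty$; the total Monge--Amp\`ere mass is simply not controlled by $\|\tilde u\|_{L^\infty}$ and the inradius. (For the particular solution $\tilde u$ of $\det D^2\tilde u = Mm^{p-n}|\tilde u|^p$ the total mass is finite, but bounding it by $C(n)R^{-n}$ is exactly equivalent to the inequality you are trying to prove, so using it would be circular.)

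The fix is the one in the paper: do not integrate the equation over all of $\Omega$, but only over the inner half-ball $B_{R/2}\subset B_R\subset\Omega$. There the gradient bound you wrote, $|D\tilde u(x)|\le |\tilde u(x)|/\dist(x,\partial\Omega)$, is uniform because $\dist(x,\partial\Omega)\ge R/2$, giving $|D\tilde u(B_{R/2})|\le C(n)R^{-n}\le C(n)|\Omega|^{-1}$. Integrating $\det D^2\tilde u = Mm^{p-n}|\tilde u|^p$ over $B_{R/2}$ then yields
$Mm^{p-n}\int_{B_{R/2}}|\tilde u|^p\,dx = \int_{B_{R/2}}\det D^2\tilde u\,dx \le C(n)|\Omega|^{-1}$,
and the lower bound $\int_{B_{R/2}}|\tilde u|^p\,dx\ge c(n,p)|\Omega|$ from part (i) (whose proof in fact produces the estimate on $B_{R/2}$) gives $Mm^{p-n}\le C(n,p)|\Omega|^{-2}$. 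Everything else in your proposal is sound.
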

\begin{proof}
Under the unimodular affine transformations $T:\R^n\rightarrow\R^n$ with $\det T=1$:
 $$\Omega\rightarrow T(\Omega),~u(x)\rightarrow u(T^{-1}x),$$
 the equation $\det D^2 u =M|u|^p$, the quantities 
 $$ \int_{\Omega} |u|^{p}~dx, \int_{\Omega} (-u) \det D^2 u~dx, \|u\|_{L^{\infty}(\Omega)}~ \text{and}~ |\Omega|$$
 are unchanged. Thus, by John's lemma, Lemma \ref{J_lem},
 we can assume that $\Omega$ is normalized, that is 
 $$B_R\subset \Omega\subset B_{nR}~\text{for some}~R>0.$$
(i) Let $u\in C(\overline{\Omega})$ be a convex function in $\Omega$ with $u=0$ on $\p\Omega$, and $\|u\|_{L^{\infty}(\Omega)}=1$. Then clearly, 
$$\int_{\Omega} |u|^{p}~dx\leq |\Omega|.$$
 Assume that $u(x_0)=-1$ where
$x_0\in\Omega$. Then for any $x\in\Omega$, the ray $x_0 x$ intersects $\p\Omega$ at $z$. We have $x=\alpha x_0 + (1-\alpha) z$ for some $\alpha\in[0,1]$. By the convexity of $u$
and $u=0$ on $\p\Omega$, we have
$$u(x) \leq \alpha u(x_0) + (1-\alpha) u(z) = -\alpha.$$
It follows from $\Omega\subset B_{nR}$ that for all $x\in\Omega$, we have
$$|u(x)|\geq\alpha =\frac{|x-z|}{|x_0-z|} \geq \frac{\dist(x,\p\Omega)}{\diam \Omega}\geq c(n) R^{-1} \dist(x,\p\Omega).$$
Then, using $B_R\subset\Omega$ and $|\Omega|\leq C(n)R^n$, we get 
\begin{eqnarray}\int_{B_{R/2}} |u|^{p}~dx&\geq& 
c(n,p) R^{-p}\int_{B_{R/2}} \dist^{p}(x,\p\Omega)~ dx \nonumber\\ &\geq& c(n,p)R^{-p} \int_{B_{R/2}} \dist^{p}(x, \p B_R) ~dx\geq c(n,p) R^n\geq c(n,p) |\Omega|.
\label{int_half}
\end{eqnarray}
Thus
$$\int_{\Omega}|u|^p~dx\geq \int_{B_{R/2}} |u|^{p}~dx \geq c(n,p) |\Omega|.$$

Next, let $$\Omega'=\{x\in\Omega: u(x)\leq -1/2\}.$$ Applying the
 Aleksandrov maximum principle, Theorem \ref{Alex_thm}, to $u+\frac{1}{2}$ on $\Omega'$, and noting that $\|u+\frac{1}{2}\|_{L^{\infty}(\Omega^{'})}=\frac{1}{2}$, we find
 $$\frac{1}{2} \leq  C(n) \diam \Omega'\left(\int_{\Omega'}\det D^2 u~dx\right)^{1/n} \leq C(n) R\left(\int_{\Omega'}\det D^2 u~dx\right)^{1/n} .$$
 Hence 
 $$\int_{\Omega} (-u)\det D^2 u~dx \geq \int_{\Omega'} (-u)\det D^2 u~dx \geq \frac{1}{2}\int_{\Omega'}\det D^2 u~dx \geq c(n)R^{-n}\geq c(n) |\Omega|^{-1}. $$
(ii) Let $v\in C(\overline{\Omega})$ be the convex Aleksandrov solution to 
\begin{equation*}
 \left\{
 \begin{alignedat}{2}
   \det D^{2} v~&=\frac{1}{R^{2n}} \h~&&\text{in} ~\Omega, \\\
v &=0\h~&&\text{on}~\p \Omega.
 \end{alignedat}
 \right.
\end{equation*}
This solution exists and is unique by Theorem \ref{exi_thm}. Clearly, $v\in C^{\infty}(\Omega)$. 
If $\Omega$ is smooth, open, bounded and uniformly convex then $v\in C^{0, 1}(\overline{\Omega})$. We can see this by constructing an explicit barrier or by recalling
the classical result of Caffarelli-Nirenberg-Spruck \cite[Theorem 1]{CNS}.

We show that 
\begin{equation}
\label{vunit}
 \|v\|_{L^{\infty}(\Omega)}\geq \frac{1}{2}.
\end{equation}
Indeed, let 
$$w(x) = \frac{1}{2R^2}(|x|^2-R^2).$$
Then $\det D^2 w = \frac{1}{R^{2n}}=\det D^2 v~\text{in}~\Omega$
and, since $\Omega\supset B_R$, 
$w\geq 0=v~\text{on}~\p\Omega.$
By the comparison principle, Lemma \ref{comp_prin}, we find that $v\leq w$ in $\Omega$. In particular, $v(0)\leq w(0)=-\frac{1}{2}$. It follows that
$$\|v\|_{L^{\infty}(\Omega)}=-\min_{\overline{\Omega}} v\geq -v(0) \geq \frac{1}{2}.$$
Let $u=\alpha v$ where $\alpha =\frac{1}{\|v\|_{L^{\infty}(\Omega)}}\leq 2,$ by (\ref{vunit}). Then $u\in C(\overline{\Omega})$ is convex, $u=0$ on $\p\Omega$ with $\|u\|_{L^{\infty}(\Omega)}=1$ and we have
$$ \int_{\Omega}(-u)\det D^2 u~dx \leq \int_{\Omega}\det D^2 u~dx \leq 2^n\int_{\Omega}\det D^2 v~dx =\frac{2^n|\Omega|}{R^{2n}}\leq C(n)|\Omega|^{-1}.$$
Thus the function $u$ satisfies the conclusion of part (ii).\\
(iii) Let $\alpha=\|u\|_{L^{\infty}(\Omega)}>0$ and $v=u/\alpha$. Then, by Proposition \ref{reg_prop}, $v\in C(\overline{\Omega})\cap C^{\infty}(\Omega)$ with
$v=0$ on $\p\Omega$, and $\|v\|_{L^{\infty}(\Omega)}=1$. Furthermore, $v$ satisfies
\begin{equation}\det D^2 v= M\alpha^{p-n} |v|^p~\text{in}~\Omega.
 \label{vnp}
\end{equation}
To estimate $ M\alpha^{p-n} $ from below, we multiply both sides of the above equation by $|v|=-v$, integrate over $\Omega$  and then using (i) to obtain the desired lower bound
for $M\alpha^{p-n}$:
$$ M\alpha^{p-n} =\frac{\int_{\Omega}|v|\det D^2 v~dx }{\int_{\Omega}|v|^{p+1}~dx}\geq c(n,p)|\Omega|^{-2}.$$
Now, by (\ref{vnp}), we estimate from above the quantity
\begin{equation}M\alpha^{p-n}=\frac{\int_{B_{R/2}}\det D^2 v ~dx}{\int_{B_{R/2}} |v|^p~dx}.
 \label{anp}
\end{equation}
Recall that $B_R\subset\Omega\subset B_{nR}$. The convexity of $v$ and the fact that $v=0$ on $\p\Omega$ give for $x\in B_{R/2}$
$$|Dv(x)|\leq \frac{|v(x)|}{\dist(x,\Omega)} \leq \frac{\|v\|_{L^{\infty}(\Omega)}}{\dist(x,\Omega)}\leq C(n) R^{-1}.$$
Hence
\begin{equation}\int_{B_{R/2}}\det D^2 v ~dx = |Dv(B_{R/2})|\leq C(n) R^{-n}\leq C(n) |\Omega|^{-1}.
 \label{det_half}
\end{equation}
Applying (\ref{int_half}) to $v$, we have
\begin{equation}\int_{B_{R/2}}|v|^p~dx\geq c(n,p) |\Omega|.
 \label{int_half1}
\end{equation}
Combining (\ref{anp}), (\ref{det_half}) and (\ref{int_half1}), we obtain the desired upper bound
for $M\alpha^{p-n}$:
$$M\alpha^{p-n}\leq C(n,p)|\Omega|^{-2}.$$
\end{proof}

\begin{cor}
\label{eigen_est}
(i) Let $\Omega$ be an open, bounded, smooth and uniformly convex domain in $\R^n$. Then for the Monge-Amp\`ere eigenvalue $\lambda(\Omega)$ of (\ref{EP}), we have
  \begin{equation*}c(n) |\Omega|^{-2}\leq \lambda(\Omega)\leq C(n) |\Omega|^{-2}.
 \end{equation*}
(ii) Let $\Omega$ be an open, bounded convex domain in $\R^n$. Then for $\lambda[\Omega]$ defined by (\ref{lam_def}), we have
  \begin{equation*}c(n) |\Omega|^{-2}\leq \lambda[\Omega]\leq C(n) |\Omega|^{-2}.
 \end{equation*}
\end{cor}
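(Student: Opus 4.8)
The plan is to read off both statements directly from Lemma~\ref{Alem}, with no new analytic work.

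For part~(i), I would apply Lemma~\ref{Alem}(iii) to the Monge-Amp\`ere eigenfunction $u$ of~(\ref{EP}) with $p=n$ and $M=\lambda(\Omega)$. This is legitimate: the eigenfunction is convex, nonzero, vanishes on $\p\Omega$, and solves $\det D^2 u=\lambda(\Omega)|u|^{n}$ classically in $\Omega$, hence in the Aleksandrov sense. The lemma then yields $c(n,n)|\Omega|^{-2}\le \lambda(\Omega)\,\|u\|_{L^{\infty}(\Omega)}^{\,n-n}\le C(n,n)|\Omega|^{-2}$; the exponent $p-n$ is $0$, so the norm factor equals $1$, and since $p=n$ is a fixed choice the constants depend only on $n$. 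This is exactly the claimed two-sided bound for $\lambda(\Omega)$. Alternatively, one can obtain~(i) from Tso's variational formula~(\ref{lamT}) by the same argument used below for~(ii), using the $C^{0,1}(\overline{\Omega})$ test function provided by Lemma~\ref{Alem}(ii).

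For part~(ii) I would work straight from the definition~(\ref{lam_def}). For the lower bound, I first note that the quotient in~(\ref{lam_def}) is invariant under $u\mapsto tu$, $t>0$, since numerator and denominator both scale like $t^{n+1}$; so for any admissible competitor $u$ I may assume $\|u\|_{L^{\infty}(\Omega)}=1$. Then Lemma~\ref{Alem}(i) with $p=n+1$ bounds the denominator above by $|\Omega|$ and, by its second estimate, the numerator below by $c(n)|\Omega|^{-1}$, whence the quotient is $\ge c(n)|\Omega|^{-2}$; taking the infimum gives $\lambda[\Omega]\ge c(n)|\Omega|^{-2}$. For the upper bound, I would use as a competitor in~(\ref{lam_def}) the function $u\in C(\overline{\Omega})\cap C^{\infty}(\Omega)$ supplied by Lemma~\ref{Alem}(ii), which has $\|u\|_{L^{\infty}(\Omega)}=1$ and $\int_{\Omega}(-u)\det D^2 u\,dx\le C(n)|\Omega|^{-1}$; bounding its denominator below by $c(n)|\Omega|$ via Lemma~\ref{Alem}(i) with $p=n+1$, the quotient is $\le C(n)|\Omega|^{-2}$, hence so is the infimum $\lambda[\Omega]$.

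I do not expect any real obstacle: the genuine work --- the Aleksandrov maximum principle estimate, the comparison-principle construction of a test function, and the interior gradient bound on balls --- is already carried out in Lemma~\ref{Alem}. The only points that deserve to be stated explicitly are the scale-invariance of the Rayleigh quotient (needed to normalize competitors for the lower bound) and the remark that choosing $p=n$ in Lemma~\ref{Alem}(iii) kills the factor $\|u\|_{L^{\infty}(\Omega)}^{\,p-n}$, so that $M=\lambda(\Omega)$ itself is trapped between constant multiples of $|\Omega|^{-2}$.
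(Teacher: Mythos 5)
Your proposal is correct and follows the same route as the paper: part~(i) is read off from Lemma~\ref{Alem}(iii) with $p=n$ and $M=\lambda(\Omega)$ (so the norm factor $\|u\|_{L^\infty}^{p-n}$ disappears), and part~(ii) follows from the definition~(\ref{lam_def}) together with Lemma~\ref{Alem}(i)--(ii) at $p=n+1$. The paper states this in one sentence; your write-up simply makes the scale-invariance normalization and the two one-sided estimates explicit, which is the right level of detail.
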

\begin{proof}
 We note that (i) follows from Lemma \ref{Alem} (iii) the special case $p=n$ and $M=\lambda (\Omega)$.
 On the other hand, using the definition of $\lambda[\Omega]$ in (\ref{lam_def}), we find that (ii) follows from Lemma \ref{Alem} (i, ii) in the special case 
 $p=n+1$.
\end{proof}

\subsection{Integral estimates} 
\label{detC_sec}
In this subsection, we will prove Proposition \ref{detC}.
We start with the following integral estimate on an open, bounded, smooth, and uniformly convex domain in $\R^n$.
\begin{lem}
\label{detdet} Let $\Omega$ be an open, bounded, smooth, and uniformly convex domain in $\R^n$.
 Suppose that $u, v\in C^{1,1}(\overline{\Omega})\cap C^4 (\Omega)$ are convex functions with $u=v=0$ on $\p\Omega$. Then
$$ \int_{\Omega} |u|\det D^2 v~dx \geq \int_{\Omega} |v|(\det D^2 u)^{\frac{1}{n}} (\det D^2 v)^{\frac{n-1}{n}}~dx.$$
\end{lem}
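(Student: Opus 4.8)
\textbf{Proof proposal for Lemma \ref{detdet}.}

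The plan is to interpret the inequality as a consequence of the Alexandrov--Fenchel--type / mixed-determinant inequality combined with an integration by parts that is legitimate here because both functions are smooth up to the boundary (this is the smooth analogue of Proposition \ref{detC}, whose hypotheses are automatically met by $C^{1,1}(\overline{\Omega})\cap C^4(\Omega)$ functions on a uniformly convex domain). First I would introduce the cofactor matrix $U=(U^{ij})$ of $D^2 u$, so that $U^{ij}=\frac{\partial (\det D^2 u)}{\partial u_{ij}}$, and recall its fundamental divergence-free property: $\sum_i \partial_i U^{ij}=0$ for $u\in C^3$. The starting point is the pointwise mixed-determinant inequality
\begin{equation*}
\frac{1}{n}\sum_{i,j}U^{ij} v_{ij}\;\geq\;(\det D^2 u)^{\frac{1}{n}}(\det D^2 v)^{\frac{1}{n}}\cdot(\det D^2 u)^{\frac{n-1}{n}}\cdot(\det D^2 u)^{-\frac{n-1}{n}},
\end{equation*}
which is just the Maclaurin/AM--GM inequality for the eigenvalues of $U^{-1}D^2 v$ weighted by $\det U=(\det D^2 u)^{n-1}$; concretely, $\frac{1}{n}\sum_{i,j}U^{ij}v_{ij}\geq \big(\det(U^{ij})\big)^{1/n}(\det D^2 v)^{1/n}=(\det D^2 u)^{\frac{n-1}{n}}(\det D^2 v)^{\frac{1}{n}}$. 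This inequality needs $D^2 v\geq 0$ and $D^2 u\geq 0$, which we have.

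Next I would multiply the pointwise inequality by $|v|=-v\geq 0$ and integrate over $\Omega$, obtaining
\begin{equation*}
-\frac{1}{n}\int_{\Omega} v\sum_{i,j}U^{ij}v_{ij}\,dx\;\geq\;\int_{\Omega}|v|\,(\det D^2 u)^{\frac{n-1}{n}}(\det D^2 v)^{\frac{1}{n}}\,dx.
\end{equation*}
The left-hand side is integrated by parts twice: using $\sum_i\partial_i U^{ij}=0$ and $v=0$ on $\p\Omega$, the first integration by parts gives $-\int_\Omega v\sum_{ij}U^{ij}v_{ij}\,dx=\int_\Omega \sum_{ij}U^{ij}v_i v_j\,dx+(\text{vanishing boundary term})$, and I would prefer instead to integrate by parts so as to land on $u$: write $\sum_{ij}U^{ij}v_{ij}$, integrate by parts moving one derivative off $v$, use divergence-freeness again, and move a derivative onto the remaining factor, exploiting the symmetry that $\sum_{ij}U^{ij}w_{ij}$ is, up to the factor $n$, the first variation of $\det D^2 u$ in the direction $w$. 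The cleanest route is the polarization identity $\int_\Omega(-w)\sum_{ij}U^{ij}v_{ij}\,dx=\int_\Omega(-v)\sum_{ij}U^{ij}w_{ij}\,dx$ for $w,v\in C^{1,1}(\overline\Omega)\cap C^4(\Omega)$ vanishing on $\p\Omega$ (both equal $\int_\Omega\sum_{ij}U^{ij}v_i w_j\,dx$ after one integration by parts, the boundary terms vanishing since $v=w=0$ on $\p\Omega$ and $U$ has bounded entries up to the boundary because $u\in C^{1,1}(\overline\Omega)$). Taking $w=u$ and using the homogeneity identity $\sum_{ij}U^{ij}u_{ij}=n\det D^2 u$ gives $\int_\Omega(-u)\sum_{ij}U^{ij}v_{ij}\,dx=\int_\Omega(-v)\,n\det D^2 u\,dx$. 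So I need to relate $\int_\Omega(-v)\sum_{ij}U^{ij}v_{ij}\,dx$ to $\int_\Omega(-u)\det D^2 v\,dx$, which requires a second application of the pointwise inequality in a different arrangement.

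Actually the more efficient bookkeeping is: from the polarization identity with $w=u$, $\int_\Omega |u|\,n\det D^2 v\,dx = \int_\Omega|v|\sum_{ij}V^{ij}u_{ij}\,dx$ where $V^{ij}$ is the cofactor matrix of $D^2 v$; then apply the pointwise inequality $\frac1n\sum_{ij}V^{ij}u_{ij}\geq(\det D^2 v)^{\frac{n-1}{n}}(\det D^2 u)^{\frac1n}$ and integrate against $|v|\geq0$ to conclude $\int_\Omega|u|\det D^2 v\,dx\geq\int_\Omega|v|(\det D^2 u)^{\frac1n}(\det D^2 v)^{\frac{n-1}{n}}\,dx$, which is exactly the claim. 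The main obstacle, and the only place where the smoothness and uniform convexity of $\Omega$ genuinely enter, is justifying that all boundary terms in the integration by parts vanish: one needs $U^{ij}v_i\to0$ (or at least that $\int_{\p\Omega}U^{ij}v_i v\,\nu_j\,d\sigma=0$), which follows since $v=0$ on $\p\Omega$ and $|Dv|$, $|U^{ij}|$ are bounded on $\overline\Omega$ by the $C^{1,1}(\overline\Omega)$ hypothesis; the $C^4(\Omega)$ regularity is used so that $\mathrm{div}\,U=0$ holds classically in the interior, and a routine exhaustion of $\Omega$ by smooth subdomains $\Omega_\delta\nearrow\Omega$ transfers the interior identity to $\Omega$ with controlled error. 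I would carry out the exhaustion argument carefully (integrate by parts on $\Omega_\delta$, then let $\delta\to0$ using dominated convergence and the boundedness of $Dv$, $Du$, $U$, $V$ up to $\p\Omega$) since this is where a careless argument could fail.
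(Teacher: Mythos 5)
Your proposal is correct and follows essentially the same route as the paper: integrate by parts twice against the divergence-free cofactor matrix $V$ of $D^2v$ on an exhaustion $\Omega_\delta\nearrow\Omega$, kill the boundary terms using $u=v=0$ on $\p\Omega$ together with the $C^{1,1}(\overline{\Omega})$ bounds, and then apply the pointwise inequality $\trace(D^2u\,V)\ge n(\det D^2u)^{1/n}(\det V)^{1/n}$ with $\det V=(\det D^2v)^{n-1}$. (Your first displayed ``mixed-determinant inequality'' has garbled exponents, but the corrected version immediately after it, which is what you actually use, is right.)
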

\begin{proof}
 Let $V= (V^{ij})$ be the cofactor matrix of the Hessian matrix $D^2 v =(v_{ij})$. 
 If $D^2 v$ is positive definite then $V=(\det D^2 v) (D^2 v)^{-1}.$
 We always have $n\det D^2 v= V^{ij} v_{ij}$ where the summation convention is understood. Moreover, $V\in C^2(\Omega)$ because $v\in C^4(\Omega)$. Since $V$ is 
 divergence-free, that is, $$\p_i V^{ij}\equiv \sum_{i=1}^n\partial_i V^{ij}=0~\text{where}~\p_i=\frac{\p}{\p x_i}$$ for all $j=1,\cdots, n$, we also have
 $$n\det D^2 v= \p_i (V^{ij} v_j).$$ 
 For $\delta>0$ small, let $\Omega_{\delta}:= \{x\in\Omega: \dist (x, \p\Omega)>\delta\}.$ Then $\Omega_\delta$ is also a open smooth domain. Denote by
 $\nu=(\nu_1,\cdots,\nu_n)$ the unit outer normal vector field on $\p\Omega_{\delta}$.
 Integrating
 by parts twice, we obtain
 \begin{eqnarray*}
  \int_{\Omega_{\delta}} (-u) n\det D^2 v~dx&=&\int_{\Omega_{\delta}} (-u)\p_i(V^{ij} v_j)~dx\\&=&
   \int_{\Omega_{\delta}}  u_{i} V^{ij} v_j~dx -\int_{\p\Omega_{\delta}} u \nu_i V^{ij} v_j =
   \int_{\Omega_{\delta}}  u_{i}\p_j (V^{ij} v)~dx -\int_{\p\Omega_{\delta}} u \nu_i V^{ij} v_j\\&=&
  \int_{\Omega_{\delta}} - u_{ij} V^{ij} v~dx -\int_{\p\Omega_{\delta}} u \nu_i V^{ij} v_j + \int_{\p\Omega_{\delta}} u_i \nu_j V^{ij} v.
 \end{eqnarray*}
Letting $\delta\rightarrow 0$, using $u=v=0$ on $\p\Omega$ and $u, v\in C^{1,1}(\overline{\Omega})$, we get
\begin{equation}  \int_{\Omega} (-u) n\det D^2 v~dx= \int_{\Omega} - u_{ij} V^{ij} v~dx.
 \label{del_ineq}
\end{equation}
Using the matrix inequality
$$\trace (AB)\geq n (\det A)^{1/n} (\det B)^{1/n}~\text{for~} A, B~\text{symmetric}~\geq 0,$$
and noting that $\det V= (\det D^2 v)^{n-1}$,
we get
$$u_{ij} V^{ij}= \trace (D^2 u V)\geq n (\det D^2 u)^{1/n}(\det V)^{1/n}= n (\det D^2 u)^{\frac{1}{n}} (\det D^2 v)^{\frac{n-1}{n}}.$$
Recalling (\ref{del_ineq}) and the fact that $u, v<0$ in $\Omega$, we obtain the desired inequality.
\end{proof}
The extension of Lemma \ref{detdet} to general open bounded convex domains as stated in Proposition \ref{detC} 
will be crucial in proving the uniqueness of the Monge-Amp\`ere eigenvalue and Monge-Amp\`ere eigenfunctions up to positive multiplicative constants
on general open bounded convex domains. 

We are now ready to prove Proposition \ref{detC}.
\begin{proof}[Proof of Proposition \ref{detC}]
Let $\{\Omega_m\}\subset\Omega$ be a sequence of open, bounded, smooth and uniformly convex domains in $\Omega$ that converges to $\Omega$ in the Hausdorff distance. 
Since $u\in C^5(\Omega),$ we have $\det D^2 u\in C^3(\Omega)$.
For each $m$, the Dirichlet
problem
\begin{equation*}
 \left\{
 \begin{alignedat}{2}
   \det D^{2} u_m~&=\det D^2 u \h~&&\text{in} ~\Omega_m, \\\
u_m &=0\h~&&\text{on}~\p \Omega_m
 \end{alignedat}
 \right.
\end{equation*}
has a unique solution $u_m\in C^{4+\alpha}(\overline{\Omega_m})$ for all $\alpha\in (0,1)$ by the classical result of Caffarelli-Nirenberg-Spruck \cite{CNS}; see Theorem 1 and Remark 2 in \cite{CNS}.

Similarly, for each $m$, the Dirichlet
problem
\begin{equation*}
 \left\{
 \begin{alignedat}{2}
   \det D^{2} v_m~&=\det D^2 v \h~&&\text{in} ~\Omega_m, \\\
v_m &=0\h~&&\text{on}~\p \Omega_m
 \end{alignedat}
 \right.
\end{equation*}
has a unique solution $v_m\in C^{4+\alpha}(\overline{\Omega_m})$ for all $\alpha\in (0, 1)$.

Applying Lemma \ref{detdet} to $u_m, v_m$ and $\Omega_m$, we have
$$ \int_{\Omega_m} |u_m|\det D^2 v_m~dx \geq \int_{\Omega_m} |v_m|(\det D^2 u_m)^{\frac{1}{n}} (\det D^2 v_m)^{\frac{n-1}{n}}~dx.$$
It follows that
\begin{equation} \int_{\Omega_m} |u_m|\det D^2 v~dx \geq \int_{\Omega_m} |v_m|(\det D^2 u)^{\frac{1}{n}} (\det D^2 v)^{\frac{n-1}{n}}~dx.
 \label{detdetm}
\end{equation}
We will let $m\rightarrow\infty$ in (\ref{detdetm}) to obtain (\ref{term0}). To see this, we first show that
\begin{equation}\label{term1}\int_{\Omega_m} |v_m|(\det D^2 u)^{\frac{1}{n}} (\det D^2 v)^{\frac{n-1}{n}}~dx\rightarrow \int_{\Omega} |v|(\det D^2 u)^{\frac{1}{n}} (\det D^2 v)^{\frac{n-1}{n}}~dx ~
\text{when} ~m\rightarrow \infty.
\end{equation}
Note that
\begin{multline} \int_{\Omega} |v|(\det D^2 u)^{\frac{1}{n}} (\det D^2 v)^{\frac{n-1}{n}}~dx -  \int_{\Omega_m} |v_m|(\det D^2 u)^{\frac{1}{n}} (\det D^2 v)^{\frac{n-1}{n}}~dx
\\= \int_{\Omega\setminus \Omega_m} |v|(\det D^2 u)^{\frac{1}{n}} (\det D^2 v)^{\frac{n-1}{n}}~dx  +
 \int_{\Omega_m}[|v|-|v_m|](\det D^2 u)^{\frac{1}{n}} (\det D^2 v)^{\frac{n-1}{n}}~dx\\ \equiv A_m + B_m.
\label{term1m}
\end{multline}
We will show that $A_m$ and $B_m$ tend to $0$ when $m\rightarrow \infty$.
Indeed, since $v\in C(\overline{\Omega})$, $v=0$ on $\p\Omega$, and  $\Omega_m$ converges to $\Omega$ in the Hausdorff distance, we have
 \begin{equation}\|v\|_{L^{\infty}(\Omega\setminus \Omega_m)}\rightarrow 0 ~\text{when} ~m\rightarrow \infty.
\label{uzerom}
\end{equation}
Therefore, from (\ref{uzerom}) and the bound  $\int_{\Omega}(\det D^2 u)^{\frac{1}{n}}  (\det D^2 v)^{\frac{n-1}{n}}~dx\leq M$, we find 
that $A_m$ goes to $0$
when $m\rightarrow \infty$. 

To show that $B_m$ goes to $0$ when $m\rightarrow \infty$, it suffices to show that
$$\|v-v_m\|_{L^{\infty}(\Omega_m)}\rightarrow 0.$$
Note that, on $\p\Omega_m$,
$$-\|v-v_m\|_{L^{\infty}(\p \Omega_m)} + v_m \leq v\leq \|v-v_m\|_{L^{\infty}(\p \Omega_m)} + v_m.$$
In $\Omega_m$, we have $$\det D^2 v=\det D^2 (v_m \pm \|v-v_m\|_{L^{\infty}(\p \Omega_m)}).$$ Thus, 
by the comparison principle, Lemma \ref{comp_prin}, we have in $\Omega_m$
$$-\|v-v_m\|_{L^{\infty}(\p \Omega_m)} + v_m \leq v\leq \|v-v_m\|_{L^{\infty}(\p \Omega_m)} + v_m.$$
It follows that 
$$\|v-v_m\|_{L^{\infty}(\Omega_m)}\leq \|v-v_m\|_{L^{\infty}(\p\Omega_m)}= \|v\|_{L^{\infty}(\p\Omega_m)}\rightarrow 0~\text{when} ~m\rightarrow \infty$$
by (\ref{uzerom}). Consequently, (\ref{term1}) holds.

Similarly, we also have
\begin{equation}\label{term2}\int_{\Omega_m} |u_m|\det D^2 v~dx\rightarrow \int_{\Omega} |u|\det D^2 v~dx ~\text{when} ~m\rightarrow \infty.
\end{equation}
Hence, using (\ref{term1}) and (\ref{term2}), we can let $m\rightarrow\infty$ in (\ref{detdetm}) to obtain the desired inequality (\ref{term0}).
\end{proof}

\section{Monge-Amp\`ere equations with degenerate right hand side}
\label{D_sec}
\subsection{An extension of Tso's theorem}  Let $\Omega$ be an open, bounded convex domain in $\R^n$.
For $p\geq 0$ and $\lambda>0$, consider the functional
\begin{equation}J_{p,\lambda}(u,\Omega)=\frac{1}{n+1}\int_{\Omega} (-u)\det D^2 u~dx -\frac{\lambda}{p+1}\int_{\Omega}(-u)^{p+1}~dx
 \label{J_func}
\end{equation}
over all convex functions $u\in C(\overline{\Omega})$ with $u=0$ on $\p\Omega$.

When the domain $\Omega$ is clear from the context, we can write $J_{p,\lambda}(u)$ for $J_{p,\lambda}(u,\Omega)$.

We recall the following theorem due to Tso \cite[Corollary 4.2 and Theorem E]{Tso}.
\begin{thm} [Tso's theorem]
\label{sub_thm0}
 Let $\Omega$ be an open, bounded, smooth and uniformly convex domain in $\R^n$. Then, for each $0\leq p\neq n$, there exists a nonzero convex solution $u\in C^{0, 1}(\overline{\Omega})\cap C^{\infty}(\Omega)$ to the Dirichlet problem
\begin{equation*}
 \left\{
 \begin{alignedat}{2}
   \det D^{2} u~&=|u|^{p} \h~&&\text{in} ~\Omega, \\\
u &=0\h~&&\text{on}~\p \Omega.
 \end{alignedat}
 \right.
\end{equation*}
Moreover, if $0\leq p<n$ then the nonzero convex function $u$ is unique and it minimizes the functional 
$J_{p, 1}(u,\Omega)$
over all convex functions $u\in C(\overline{\Omega})\cap C^2(\Omega)$ with $u=0$ on $\p\Omega$ and having positive definite Hessian $D^2 u$ in $\Omega$.
 \end{thm}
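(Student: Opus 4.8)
The plan is to run Tso's variational argument, in which the crucial feature of the subcritical range $0\le p<n$ is the ordering of exponents $p+1<n+1$: this is what makes the functional $J_{p,1}$ coercive along rays and yields uniqueness, while for $p>n$ only existence survives (and the theorem claims no more). I first record the two-sided a priori bound: by Lemma \ref{Alem}(iii) with $M=1$, every nonzero convex Aleksandrov solution $u$ of $\det D^2u=|u|^p$, $u=0$ on $\p\Omega$, satisfies $c(n,p)|\Omega|^{-2}\le\|u\|_{L^\infty(\Omega)}^{p-n}\le C(n,p)|\Omega|^{-2}$, so for $p\neq n$ solutions can neither degenerate to $0$ nor blow up. Then, normalizing by John's lemma (Lemma \ref{J_lem}), I obtain existence by minimizing the scale-invariant quotient
\[
Q(v)=\frac{\int_\Omega(-v)\det D^2v\,dx}{\bigl(\int_\Omega(-v)^{p+1}\,dx\bigr)^{(n+1)/(p+1)}}
\]
over nonzero convex $v\in C(\ov\Omega)$ vanishing on $\p\Omega$. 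Lemma \ref{Alem}(i) (applied with exponent $p+1$) gives $\inf Q=:\kappa>0$ and, along a minimizing sequence normalized so that $\int_\Omega(-v_k)^{p+1}\,dx=1$, the numbers $\|v_k\|_{L^\infty}$ lie in a fixed compact subinterval of $(0,\infty)$; since uniformly bounded convex functions are equicontinuous on compact subsets, a subsequence converges locally uniformly to a convex $u_0\not\equiv0$ with zero boundary values, and lower semicontinuity of the Monge-Amp\`ere energy $v\mapsto\int_\Omega(-v)\det D^2v\,dx$ (on convex functions with common zero boundary data) makes $u_0$ a minimizer of $Q$. This construction works verbatim for every $0\le p\neq n$; for $0\le p<n$ one may alternatively build a solution by a monotone sub/super-solution iteration, the a priori bound guaranteeing that the iterates stay in a compact set.

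From the first-variation identities $\delta\int_\Omega(-v)\det D^2v=-(n+1)\int_\Omega\phi\det D^2v$ (integration by parts against the divergence-free cofactor matrix) and $\delta\int_\Omega(-v)^{p+1}=-(p+1)\int_\Omega(-v)^p\phi$, the minimizer $u_0$ of $Q$ satisfies $\det D^2u_0=\mu|u_0|^p$ for a positive constant $\mu$, so $u:=\mu^{1/(n-p)}u_0$ solves $\det D^2u=|u|^p$ in $\Omega$, $u=0$ on $\p\Omega$. Proposition \ref{reg_prop} gives that $u$ is strictly convex with $u\in C^\infty(\Omega)$. Since $\det D^2u=|u|^p\le\|u\|_{L^\infty}^p$ is bounded, standard boundary barriers (paraboloids from circumscribed balls, available because $\p\Omega$ is smooth and uniformly convex) give $|u(x)|\le C\,\dist(x,\p\Omega)$, which together with the interior gradient estimate from convexity yields $u\in C^{0,1}(\ov\Omega)$ (alternatively quote Caffarelli-Nirenberg-Spruck \cite{CNS}).

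Now restrict to $0\le p<n$. Along a ray, $J_{p,1}(tv)=at^{n+1}-bt^{p+1}$ with $a,b>0$ determined by $v$; because $p+1<n+1$ the minimum over $t>0$ is attained at a unique $t_\ast(v)$ and equals $-c_{n,p}\,Q(v)^{-(p+1)/(n-p)}$. Since $u$ solves the equation it sits at its own $t_\ast=1$, so $J_{p,1}(u)=-c_{n,p}\kappa^{-(p+1)/(n-p)}$; hence for any admissible $v$, $J_{p,1}(v)\ge\min_{t>0}J_{p,1}(tv)=-c_{n,p}Q(v)^{-(p+1)/(n-p)}\ge-c_{n,p}\kappa^{-(p+1)/(n-p)}=J_{p,1}(u)$, i.e.\ $u$ minimizes $J_{p,1}$ over the admissible class. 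For uniqueness, let $u_1,u_2$ be two nonzero convex solutions and put $t_\ast:=\sup_\Omega|u_2|/|u_1|$. The linear lower bound $|u_1(x)|\ge c\,\dist(x,\p\Omega)$ (from convexity and $u_1=0$ on $\p\Omega$, exactly as in the proof of Lemma \ref{Alem}(i)) and the linear upper bound on $|u_2|$ show $t_\ast<\infty$ and that the supremum is attained at some interior $x_0$; there $t_\ast u_1$ touches $u_2$ from below, so $D^2(t_\ast u_1)(x_0)\le D^2u_2(x_0)$ and
\[
t_\ast^{\,n}|u_1(x_0)|^p=\det D^2(t_\ast u_1)(x_0)\le\det D^2u_2(x_0)=|u_2(x_0)|^p=t_\ast^{\,p}|u_1(x_0)|^p,
\]
forcing $t_\ast^{\,n-p}\le1$, i.e.\ $t_\ast\le1$; hence $u_1\le t_\ast u_1\le u_2$, and by symmetry $u_1=u_2$. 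For $p>n$ this last step yields $t_\ast\ge1$ instead and the ray $t\mapsto J_{p,1}(tv)$ is unbounded below, so neither uniqueness nor the variational characterization holds, consistent with the statement.

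I expect the main obstacle to be the boundary bookkeeping in the uniqueness argument: one needs $u_1$ and $u_2$ to have comparable linear decay at $\p\Omega$, so that $t_\ast=\sup|u_2|/|u_1|$ is finite and attained in the interior, and this is exactly where smoothness and uniform convexity of $\p\Omega$ enter — and exactly what fails for a general convex domain, which is why later in the paper this sliding argument is replaced by the nonlinear integration-by-parts inequality of Proposition \ref{detC}. A secondary technical point is the lower semicontinuity of the Monge-Amp\`ere energy and the precompactness in the direct method, both of which are standard for convex functions with common zero boundary data.
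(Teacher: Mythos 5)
This theorem is not proved in the paper at all: it is imported from Tso \cite{Tso} (Corollary 4.2 and Theorem E there), and the paper's own contribution begins with the extension to general convex domains (Theorem \ref{sub_thm}), obtained from Tso's result by exhaustion. Your architecture is essentially Tso's, and the pieces you actually compute are correct: the scale invariance and positive lower bound for $Q$ via Lemma \ref{Alem}(i), the two-sided $L^\infty$ bound from Lemma \ref{Alem}(iii), the identity $\min_{t>0}J_{p,1}(tv)=-c_{n,p}Q(v)^{-(p+1)/(n-p)}$ and the resulting reduction of the minimality of $J_{p,1}$ to the minimality of $Q$ (which in fact gives minimality over the larger class of all convex $v\in C(\ov\Omega)$ vanishing on $\p\Omega$), the boundary barrier plus the convexity gradient bound $|Du(x)|\le|u(x)|/\dist(x,\p\Omega)$ for the global Lipschitz estimate, and the exponent bookkeeping $t_*^{n-p}\le1$ in the uniqueness step.

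Two steps, however, contain genuine gaps. (1) The passage from ``$u_0$ minimizes $Q$'' to ``$\det D^2u_0=\mu|u_0|^p$''. The admissible set is a convex cone, not a linear space, and the minimizer produced by the direct method is a priori only a convex function: for a general perturbation $\phi$ the competitor $u_0+t\phi$ is not convex for either sign of $t$, so the two-sided first-variation identities you quote are not available, and the formal integration by parts against the cofactor matrix presupposes smoothness that $u_0$ does not yet have. Making this step rigorous is precisely the technical heart of Tso's paper (and of Hartenstine's extension); it is done there not by a first variation but by a replacement argument --- comparing $u_0$ with the Aleksandrov solution of the Dirichlet problem $\det D^2w=\kappa|u_0|^p$, $w=0$ on $\p\Omega$, and showing via integral inequalities in the spirit of Lemma \ref{detdet} that the replacement does not increase the functional and must coincide with $u_0$. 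As written, the Euler--Lagrange equation is asserted, not proved. (2) In the uniqueness argument you claim that $t_*=\sup_\Omega|u_2|/|u_1|$ is \emph{attained at an interior point} because both solutions decay linearly at $\p\Omega$. Boundedness of the ratio does not give interior attainment: the supremum may be approached only along a sequence tending to $\p\Omega$, where both functions vanish and the Hessian comparison at a touching point is unavailable (one would need a Hopf-type lemma for the linearized operator, which is delicate here). The standard fix --- used by Tso and reproduced in the paper's Proposition \ref{sub_prop} --- is the dilation trick: compare $u_2$ with $u_1(\cdot/\Lambda)$ for $\Lambda>1$, which is bounded away from zero near $\p\Omega$, so that the ratio tends to $0$ at the boundary and its maximum is forced into the interior; letting $\Lambda\searrow1$ and using $p<n$ then yields the contradiction. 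With these two repairs your proof closes, but both are essential ideas of the original argument rather than routine details.
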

When $0\leq p<n$, Theorem \ref{sub_thm0} was extended by Hartenstine \cite[Theorem 3.1]{Har2} to the case where the domain $\Omega$ is only assumed to be bounded and strictly convex.
In the following theorem, we further extend Theorem \ref{sub_thm0} to case where the domain $\Omega$ is only assumed to be bounded and convex. Our extension works for all $p$.
\begin{thm} 
\label{sub_thm}
 Let $\Omega$ be an open, bounded convex domain in $\R^n$. Then, for each $0\leq p\neq n$, there exists a nonzero convex Aleksandrov solution $u\in C(\overline{\Omega})
 \cap C^{\infty}(\Omega)$ to the Dirichlet problem
\begin{equation}
 \left\{
 \begin{alignedat}{2}
   \det D^{2} u~&=|u|^{p} \h~&&\text{in} ~\Omega, \\\
u &=0\h~&&\text{on}~\p \Omega.
 \end{alignedat}
 \right.
 \label{DMA}
\end{equation}
Moreover, if $0\leq p<n$ then the nonzero convex function  $u$ is unique and it minimizes the functional 
$$J_{p, 1}(u,\Omega)=\frac{1}{n+1}\int_{\Omega} (-u)\det D^2 u~dx -\frac{1}{p+1}\int_{\Omega}(-u)^{p+1}~dx$$
over all convex functions $u\in C(\overline{\Omega})$ with $u=0$ on $\p\Omega$.
 \end{thm}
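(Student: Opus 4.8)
The plan is to reduce the general bounded convex domain $\Omega$ to the smooth, uniformly convex case treated in Theorem \ref{sub_thm0} (Tso) and Hartenstine's strictly convex extension, and pass to the limit via the compactness theorem for the Monge-Amp\`ere equation (Theorem \ref{compact_thm}). First I would fix $0\le p\neq n$ and choose an exhausting sequence $\{\Omega_m\}$ of open, bounded, smooth and uniformly convex domains converging to $\Omega$ in the Hausdorff distance (e.g.\ via John's lemma normalization and standard mollification of the support function). By Theorem \ref{sub_thm0} there is a nonzero convex solution $u_m\in C^{0,1}(\overline{\Omega_m})\cap C^\infty(\Omega_m)$ of $\det D^2 u_m=|u_m|^p$ with $u_m=0$ on $\p\Omega_m$. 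Set $\alpha_m=\|u_m\|_{L^\infty(\Omega_m)}>0$; rescaling $v_m=u_m/\alpha_m$ gives $\det D^2 v_m=\alpha_m^{p-n}|v_m|^p$ with $\|v_m\|_{L^\infty}=1$. By Lemma \ref{Alem}(iii) (applied with $M=\alpha_m^{?}$; more precisely, reading off the a priori bound with $M=1$, $p$ as given, and eigenvalue-type normalization) we get $c(n,p)|\Omega_m|^{-2}\le \alpha_m^{p-n}\le C(n,p)|\Omega_m|^{-2}$, and since $|\Omega_m|\to|\Omega|>0$, the exponents $\alpha_m^{p-n}$ stay in a fixed compact subinterval of $(0,\infty)$; because $p\neq n$ this pins $\alpha_m$ itself into $[c,C]$ with $c,C>0$. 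Extracting a subsequence, $\alpha_m\to\alpha>0$ and $\alpha_m^{p-n}|v_m|^p\,dx$ has uniformly bounded mass (the mass is $\int\det D^2 v_m=|Dv_m(\Omega_m)|$, bounded by Aleksandrov's maximum principle together with the Lipschitz bound near the top level set as in Lemma \ref{Alem}).

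Next I would invoke Theorem \ref{compact_thm}: after passing to a further subsequence so that the measures $\mu_m:=|u_m|^p\,dx=\alpha_m^p|v_m|^p\,dx$ converge weakly$^*$ to some measure $\mu$ on $\Omega$, the solutions $u_m$ converge locally uniformly in $\Omega$ to the convex Aleksandrov solution $u\in C(\overline{\Omega})$ of $\det D^2 u=\mu$, $u=0$ on $\p\Omega$. The local uniform convergence $u_m\to u$ (together with uniform $L^\infty$ bounds) upgrades to $\mu=|u|^p\,dx$: on any compact $K\Subset\Omega$, $|u_m|^p\to|u|^p$ uniformly, so $\mu\lfloor K=|u|^p\,dx\lfloor K$, and letting $K\uparrow\Omega$ gives $\mu=|u|^p\,dx$ as measures on $\Omega$ (no mass escapes to $\p\Omega$ since $u_m$ are uniformly bounded and $\Omega_m\to\Omega$). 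Hence $u$ solves (\ref{DMA}) in the Aleksandrov sense. That $u$ is nonzero follows from the lower bound $\|u_m\|_{L^\infty}=\alpha_m\ge c>0$ combined with a uniform modulus of continuity up to $\p\Omega$ — which I would get from Aleksandrov's maximum principle (Theorem \ref{Alex_thm}), since $\int_{\Omega_m}\det D^2 u_m\,dx=\int_{\Omega_m}|u_m|^p\le C$ yields $|u_m(x)|^n\le C(n)(\diam\Omega_m)^{n-1}\dist(x,\p\Omega_m)\cdot C$, giving an equicontinuous family whose limit cannot collapse to $0$. Finally, interior smoothness $u\in C^\infty(\Omega)$ and strict convexity are immediate from Proposition \ref{reg_prop}.

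For the second part, assume $0\le p<n$. Uniqueness of the nonzero convex solution and the minimality property I would prove together by a direct variational argument on the functional $J_{p,1}(\cdot,\Omega)$ over the convex cone $\mathcal{C}=\{w\in C(\overline{\Omega}): w \text{ convex},\ w=0 \text{ on }\p\Omega\}$. The key points: (a) $J_{p,1}$ is bounded below on $\mathcal{C}$ — along the ray $tw$ one computes $J_{p,1}(tw)=\tfrac{t^{n+1}}{n+1}\int(-w)\det D^2 w-\tfrac{t^{p+1}}{p+1}\int(-w)^{p+1}$, and since $p+1<n+1$ and, by Lemma \ref{Alem}(i) after normalization, the ratio $\int(-w)\det D^2 w\big/\big(\int(-w)^{p+1}\big)^{(n+1)/(p+1)}$ is bounded below by a positive constant depending only on $n,p,|\Omega|$, the infimum over each ray is a finite negative number with a uniform lower bound, and a reverse Aleksandrov/Hölder estimate controls the general case; (b) a minimizing sequence is precompact by the estimates above and Theorem \ref{compact_thm}, so a minimizer $u_0\in\mathcal{C}$ exists, is nonzero (since $\inf J_{p,1}<0$), and solves the Euler–Lagrange equation $\det D^2 u_0=|u_0|^p$ — here one uses that perturbations $u_0+t\varphi$ (for $\varphi$ convex combinations keeping $\mathcal{C}$) give $\tfrac{d}{dt}J_{p,1}=\int(-\varphi)(\det D^2 u_0-|u_0|^p)$ by the symmetry identity $\int(-\varphi)\det D^2 u_0=\int(-u_0)\,D^2u_0\text{-linearized pairing}$, exactly the integration-by-parts computation of Lemma \ref{detdet}/Proposition \ref{detC} specialized to the linear first variation; (c) uniqueness: if $u,\tilde u$ are two nonzero convex solutions, apply Proposition \ref{detC} in both directions (with $\det D^2 u=|u|^p$, $\det D^2\tilde u=|\tilde u|^p$, whose finiteness hypotheses (\ref{detuv}) follow from Lemma \ref{Alem}) to get $\int|u|\,|\tilde u|^p\ge\int|\tilde u|\,|u|^{p/n}|\tilde u|^{p(n-1)/n}$ and the symmetric inequality, then combine with Hölder's inequality; since $p/n<1$ the exponents force $u=c\tilde u$ pointwise, and plugging back into the equation forces $c=1$. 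The main obstacle is part (c): making the Proposition \ref{detC} comparison sharp enough to conclude proportionality when $\Omega$ is merely convex — the regularity hypothesis $u,v\in C^5(\Omega)$ there is available in the interior by Proposition \ref{reg_prop}, but one must handle the boundary behaviour and the finiteness conditions (\ref{detuv}) carefully, and the Hölder step needs $p<n$ strictly to avoid the borderline case. I expect the ray-wise boundedness and the extraction of the minimizer to be routine given the estimates already in place.
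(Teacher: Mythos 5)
Your existence argument is essentially the paper's (Proposition \ref{sub_comp}): approximate $\Omega$ by smooth uniformly convex domains, invoke Tso's theorem on each $\Omega_m$, pin $\|u_m\|_{L^\infty}$ between positive constants via Lemma \ref{Alem}(iii) (using $p\neq n$), get a uniform boundary modulus from Aleksandrov's maximum principle, and pass to the limit with Theorem \ref{compact_thm} and Proposition \ref{reg_prop}. That part is fine.

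The genuine gap is in your uniqueness step (c). Applying Proposition \ref{detC} in both directions to two solutions $u,\tilde u$ gives
\begin{equation*}
\int_\Omega |u|\,|\tilde u|^p\,dx \;\ge\; \int_\Omega |u|^{p/n}\,|\tilde u|^{\,1+p-p/n}\,dx,
\qquad
\int_\Omega |\tilde u|\,|u|^p\,dx \;\ge\; \int_\Omega |\tilde u|^{p/n}\,|u|^{\,1+p-p/n}\,dx,
\end{equation*}
and these do not close under H\"older: writing the right-hand sides as $\int (|u|\,|\tilde u|^p)^{p/n}(|\tilde u|^{1+p})^{1-p/n}$, H\"older bounds them from \emph{above} by $\bigl(\int|u||\tilde u|^p\bigr)^{p/n}\bigl(\int|\tilde u|^{1+p}\bigr)^{1-p/n}$, which is the wrong direction to chain with the inequalities you already have; there is no reverse H\"older available, and no equality case to exploit, so "the exponents force $u=c\tilde u$" is not justified. (This mechanism genuinely only closes at $p=n$, where the two integrands become pointwise proportional, which is exactly how the paper uses Proposition \ref{detC} in Proposition \ref{uni_lam}.) The paper proves uniqueness for $0\le p<n$ by an entirely different device (Proposition \ref{sub_prop}, following Tso): assuming $u-v>0$ somewhere, one dilates $u_\Lambda(x)=u(x/\Lambda)$, locates an interior maximum of the ratio $\eta_\Lambda=v/u_\Lambda$, compares Hessians there to get $1\ge \eta_\Lambda^{\,n-p}\Lambda^{-2n}\ge(1+\e)^{n-p}\Lambda^{-2n}$, and lets $\Lambda\searrow 1$ — this is where $p<n$ enters. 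You should replace (c) by this argument.

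Your minimality step is also a different (and shakier) route than the paper's. You propose a direct method on $\mathcal C$, which requires lower semicontinuity of $\int(-u)\det D^2u$ along minimizing sequences and a first-variation computation on a convex cone (only one-sided perturbations are admissible, so you get a variational inequality, not immediately the equation); neither is addressed. The paper sidesteps all of this: each $u_m$ minimizes $J_{p,1}(\cdot,\Omega_m)$ by Tso plus Hartenstine's Lemma \ref{lem31Har}, the values $J_{p,1}(u_m,\Omega_m)$ converge to $J_{p,1}(u,\Omega)$ (Proposition \ref{sub_comp}), and Lemma \ref{lem32Har} transports any competitor on $\Omega$ to a nearby competitor on $\Omega_m$, yielding minimality of the limit by contradiction. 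If you keep your variational route you must supply the semicontinuity and Euler--Lagrange details; otherwise adopt the transfer-of-minimality argument.
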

 \begin{rem}
 The conclusions of Theorem \ref{sub_thm} remain unchanged if we replace $1$ by $\lambda>0$ in the functional $J_{p,1}(\cdot,\Omega)$ and the equation $\det D^2 u=|u|^p$
is replaced by $\det D^2 u=\lambda |u|^p$.
\end{rem}

For reader's convenience, we include below a simple proof of Theorem \ref{sub_thm} for general open bounded convex domains $\Omega$. Our proof adapts arguments from \cite{Har2}.

We start with following compactness result.
\begin{prop}
\label{sub_comp} Let $0\leq p\neq n$.  Let $\Omega$ be an open, bounded convex domain in $\R^n$.  Let $\{\Omega_m\}$ be a sequence of open, bounded, smooth and uniformly convex domains in $\R^n$ 
  that converges to $\Omega$ in the Hausdorff distance. For each $m$, 
consider a
nonzero convex solution $u_m \in C^{0, 1}(\overline{\Omega_m})\cap C^{\infty}(\Omega_m)$
to 
\begin{equation}
 \left\{
 \begin{alignedat}{2}
  \det D^{2} u_m~&=|u_m|^{p} \h~&&\text{in} ~\Omega_m, \\\
u_m &=0\h~&&\text{on}~\p \Omega_m.
 \end{alignedat}
 \right.
\label{DMAm}
\end{equation}
Then up to extracting a subsequence, $\{u_m\}$
converges uniformly on compact subsets of $\Omega$ to a nonzero convex Aleksandrov solution $u\in C(\overline{\Omega})\cap C^{\infty}(\Omega)$ of (\ref{DMA}). Furthermore, $J_{p, 1}(u_m,\Omega_m)\rightarrow J_{p,1}(u,\Omega).$
\end{prop}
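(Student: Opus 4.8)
The plan is to run a standard Arzel\`a--Ascoli compactness argument, using the quantitative estimates of Lemma \ref{Alem} to keep the limit nonzero and to control its behaviour near the (moving) boundary. \textbf{Step 1 (uniform bounds).} Applying Lemma \ref{Alem}(iii) with $M=1$ to each $u_m$ gives $c(n,p)|\Omega_m|^{-2}\le \|u_m\|_{L^\infty(\Omega_m)}^{p-n}\le C(n,p)|\Omega_m|^{-2}$; since $\Omega_m\to\Omega$ in the Hausdorff distance we have $|\Omega_m|\to|\Omega|\in(0,\infty)$ and $\diam\Omega_m$ uniformly bounded, and since $p\neq n$ this yields constants $0<c_1\le \|u_m\|_{L^\infty(\Omega_m)}\le c_2$ depending only on $n,p,\Omega$, for all large $m$. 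Consequently $\int_{\Omega_m}\det D^2 u_m\,dx=\int_{\Omega_m}|u_m|^p\,dx\le c_2^{\,p}|\Omega_m|\le C$, so Aleksandrov's maximum principle (Theorem \ref{Alex_thm}) gives the uniform decay estimate $|u_m(x)|^n\le C\,\dist(x,\partial\Omega_m)$ for all $x\in\Omega_m$, with $C=C(n,p,\Omega)$.

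\textbf{Step 2 (passage to a limit).} Fix a compact $K\subset\subset\Omega$ and $\delta>0$ with $\{x:\dist(x,K)\le\delta\}\subset\subset\Omega$; for $m$ large this neighbourhood lies in $\Omega_m$, and there $u_m$ is convex with $|u_m|\le c_2$, hence uniformly Lipschitz on $K$ with constant $2c_2/\delta$. Exhausting $\Omega$ by compacts and combining Arzel\`a--Ascoli with a diagonal argument, we extract a subsequence along which $u_m$ converges locally uniformly on $\Omega$ to a convex function $u$ with $|u|\le c_2$. By the weak continuity of the Monge--Amp\`ere operator, $\det D^2 u_m\rightharpoonup \det D^2 u$ weakly$^\ast$ on $\Omega$, while $|u_m|^p\,dx\rightharpoonup |u|^p\,dx$ on $\Omega$ by the local uniform convergence; hence $\det D^2 u=|u|^p$ in $\Omega$ in the Aleksandrov sense. (Alternatively, this identification follows directly from Theorem \ref{compact_thm}.)

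\textbf{Step 3 (boundary values, nonvanishing, regularity).} Letting $m\to\infty$ in the decay estimate of Step 1 and using $\dist(x,\partial\Omega_m)\to\dist(x,\partial\Omega)$ for each $x\in\Omega$ (a standard property of Hausdorff convergence of convex domains), we get $|u(x)|^n\le C\,\dist(x,\partial\Omega)$ on $\Omega$, so $u$ extends continuously to $\overline\Omega$ with $u=0$ on $\partial\Omega$; thus $u\in C(\overline\Omega)$ is an Aleksandrov solution of (\ref{DMA}). For nonvanishing, choose $x_m\in\Omega_m$ with $u_m(x_m)=-\|u_m\|_{L^\infty(\Omega_m)}\le -c_1$; the decay estimate forces $\dist(x_m,\partial\Omega_m)\ge c_1^n/C$, so a subsequence of $(x_m)$ converges to some $x_0\in\Omega$ with $\dist(x_0,\partial\Omega)>0$, and by local uniform convergence $u(x_0)=\lim_m u_m(x_m)\le -c_1<0$; hence $u$ is nonzero. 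Finally, Proposition \ref{reg_prop} applied to $u$ gives $u\in C^\infty(\Omega)$ (and strict convexity).

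\textbf{Step 4 (convergence of the functional).} Using $\det D^2 u_m=|u_m|^p$ we have $\int_{\Omega_m}(-u_m)\det D^2 u_m\,dx=\int_{\Omega_m}(-u_m)^{p+1}\,dx$, so $J_{p,1}(u_m,\Omega_m)=\big(\tfrac1{n+1}-\tfrac1{p+1}\big)\int_{\Omega_m}(-u_m)^{p+1}\,dx$, and likewise for $u$ and $\Omega$; hence it suffices to show $\int_{\Omega_m}(-u_m)^{p+1}\,dx\to\int_\Omega(-u)^{p+1}\,dx$. Extending $u_m$ and $u$ by $0$ outside $\Omega_m$ and $\Omega$ respectively, the extensions are uniformly bounded by $c_2$, supported in a fixed ball, and converge a.e. on $\R^n$ (for $x\in\Omega$, eventually $x\in\Omega_m$ and $u_m(x)\to u(x)$; for $x\notin\overline\Omega$, eventually $u_m(x)=0=u(x)$), so the claim follows from dominated convergence. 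I expect the crux to be \emph{Step 3}: one must simultaneously guarantee that the limit vanishes on $\partial\Omega$, so that it genuinely solves the Dirichlet problem (\ref{DMA}), and that it does \emph{not} vanish identically, so that no mass escapes to the boundary; both facts rest on the quantitative Aleksandrov estimate of Step 1 together with the two-sided bound of Lemma \ref{Alem}(iii), the remaining steps being soft compactness arguments.
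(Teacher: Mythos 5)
Your proof is correct and follows essentially the same route as the paper's: the two-sided $L^\infty$ bound from Lemma \ref{Alem}(iii), the uniform decay from Aleksandrov's maximum principle, Arzel\`a--Ascoli plus Theorem \ref{compact_thm} to identify the limit, Proposition \ref{reg_prop} for interior smoothness, and the algebraic reduction of $J_{p,1}$ to $\int(-u)^{p+1}$ for the functional convergence. You supply somewhat more detail than the paper on the nonvanishing of the limit and on the dominated-convergence step, but the argument is the same.
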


\begin{proof}
By Lemma \ref{Alem}(iii), we have
\begin{equation}c(n, p) |\Omega_m|^{\frac{2}{n-p}}\leq \|u_m\|_{L^{\infty}(\Omega_m)} \leq C(n, p) |\Omega_m|^{\frac{2}{n-p}}.
 \label{um_est}
\end{equation}
Applying the Aleksandrov maximum principle, Theorem \ref{Alex_thm}, to the solution $u_m$ of (\ref{DMAm}), and using the bound (\ref{um_est}), we find
\begin{eqnarray}
 \label{1n_Holder}
 |u_m(x)|^n&\leq& C(n) (\diam \Omega_m)^{n-1} \dist(x,\p\Omega_m) 
 \int_{\Omega_m} \det D^2 u_m~dx\nonumber \\ & \leq& C(n, p, \diam \Omega_m, |\Omega_m|) \dist(x,\p\Omega_m)~\forall x\in\Omega_m.
\end{eqnarray}
Due to the convergence of $\Omega_m$ to $\Omega$ in the Hausdorff distance, the sequence $\{C(n, p, \diam \Omega_m, |\Omega_m|)\}$ is bounded by a constant $C(n,p,\Omega)$ independent of $m$.
Now, combining (\ref{1n_Holder}) with $u_m=0$ on $\p\Omega_m$, we find that $\{u_m\}$ is uniformly $C^{0,\frac{1}{n}}$ at the boundary:
$$|u_m(x)-u_m(y)|\leq C(n,p,\Omega)|x-y|^{\frac{1}{n}}~\forall x\in\Omega_m,~\forall y\in\p\Omega_m.$$
From the convexity of $u_m$,
we find that the functions $u_m$ have uniformly bounded global $C^{0, \frac{1}{n}}$ norm on $\overline{\Omega_m}$. By the Arzela-Ascoli theorem, there 
exists a subsequence of $\{u_m\}$, still
denoted by $\{u_m\}$, that converges locally uniformly to a convex function $u$ on $\Omega$.
The compactness theorem
of the Monge-Amp\`ere equation, Theorem \ref{compact_thm}, then gives that the function $u$ is actually an Aleksandrov solution of (\ref{DMA}). The 
estimates (\ref{um_est}) show that $u\not\equiv 0$. By Proposition \ref{reg_prop}, $u\in C^{\infty}(\Omega)$.

Because $u_m$ and $u$ solve (\ref{DMAm}) and (\ref{DMA}), respectively, 
we have
$$J_{p, 1}(u_m,\Omega_m)= \frac{p-n}{(n+1)(p+1)}\int_{\Omega_m}|u_m|^{p+1}~ dx~
\text{and}~
J_{p,1}(u,\Omega)= \frac{p-n}{(n+1)(p+1)}\int_{\Omega}|u|^{p+1}~ dx.$$
Since it is easy to see that
$$\int_{\Omega_m} |u_m|^{p+1} ~dx\rightarrow \int_{\Omega} |u|^{p+1}~dx,$$
the convergence $J_{p, 1}(u_m,\Omega_m)\rightarrow J_{p,1}(u,\Omega)$ follows.
 \end{proof}
 Next, we  prove a uniqueness result for (\ref{DMA}) when $0\leq p<n$ which follows from the interior regularity of its solutions and a rescaling argument.
 \begin{prop}
 \label{sub_prop} Let $\Omega$ be an open, bounded convex domain in $\R^n$. Let $p\in [0, n)$.
 Then there is a 
  unique nonzero convex Aleksandrov solution $u\in C(\overline{\Omega})$ to (\ref{DMA}). 
 \end{prop}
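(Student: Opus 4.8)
\emph{Plan.} The existence half is immediate from the machinery already in place: I would take a sequence $\{\Omega_m\}$ of open, bounded, smooth and uniformly convex domains converging to $\Omega$ in the Hausdorff distance, solve the Dirichlet problem on each $\Omega_m$ by Tso's theorem (Theorem~\ref{sub_thm0}, available since $0\le p\ne n$), and apply Proposition~\ref{sub_comp} to extract a subsequential limit, which is a nonzero convex Aleksandrov solution $u\in C(\overline\Omega)\cap C^\infty(\Omega)$ of \eqref{DMA}. (For $0\le p<n$ the minimality of $u$ for $J_{p,1}(\cdot,\Omega)$ can then be read off afterwards by comparing $J_{p,1}(u,\Omega)$ with the $J_{p,1}(u_m,\Omega_m)$ for minimizing $u_m$ from Theorem~\ref{sub_thm0}, once uniqueness is known.) So the only real content is uniqueness. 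Let $u,v\in C(\overline\Omega)$ be two nonzero convex Aleksandrov solutions of \eqref{DMA}; by Proposition~\ref{reg_prop} both are strictly convex and $C^\infty$ in $\Omega$, hence $u,v<0$ and $D^2u,D^2v>0$ in $\Omega$, and the ratio $v/u$ is a positive $C^\infty$ function there.

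The mechanism I would use is the sub-homogeneity forced by $p<n$: since $\det D^2(tu)=t^n(-u)^p=t^{\,n-p}\bigl(-(tu)\bigr)^p$ and $n-p>0$, the scalar multiple $tu$ is a subsolution of $\det D^2w=(-w)^p$ for $t\ge1$ and a supersolution for $0<t\le1$. Set $m=\inf_\Omega(v/u)$ and $M=\sup_\Omega(v/u)$, taken in $[0,\infty]$, so that $v\le mu$ and $v\ge Mu$ pointwise in $\Omega$. Suppose $0<m<1$, and put $a=m^{(p-n)/n}>1$. Then $\det D^2(amu)=a^nm^n(-u)^p=m^p(-u)^p\le (-v)^p=\det D^2v$ in $\Omega$ (using $v\le mu<0$), while $amu=v=0$ on $\partial\Omega$; the comparison principle (Lemma~\ref{comp_prin}) gives $amu\ge v$ in $\Omega$, i.e.\ $v/u\ge am>m$ throughout $\Omega$, contradicting the definition of $m$. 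Hence $m\ge1$ or $m=0$; applying the same argument to the pair $(v,u)$ (so to $\inf_\Omega(u/v)=1/M$) shows $M\le1$ or $M=\infty$. Since $m\le M$, either $m=M=1$ — in which case $v/u\equiv1$, i.e.\ $v=u$, and we are done — or else $m=0$ or $M=\infty$. Note this step only uses $v\le mu$ (resp.\ $v\ge Mu$) as pointwise inequalities and the comparison principle, so it needs no interior-touching or strong maximum principle and makes no assumption on boundary regularity.

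Thus the crux is to exclude $m=0$ (the case $M=\infty$ being identical after swapping $u$ and $v$). On every compact subset of $\Omega$ the ratio $v/u$ is bounded below by a positive constant, so if $m=0$ there are interior points $x_k$ with $v(x_k)/u(x_k)\to0$ and necessarily $\dist(x_k,\partial\Omega)\to0$. Here I would run a blow-up at the $x_k$: rescale both $u$ and $v$ by affine maps that normalize (via John's lemma, Lemma~\ref{J_lem}) the sections of $u$ centered at $x_k$, and pass to the limit; the rescaled functions converge locally uniformly to convex functions $u_\infty,v_\infty$ on a bounded, normalized limiting convex domain. Because $p<n$, the homogeneity exponents entering the rescaled equations degenerate, so $u_\infty$ and $v_\infty$ both solve the \emph{homogeneous} Monge--Amp\`ere equation $\det D^2w=0$ and, by construction, inherit coinciding boundary data; uniqueness for the Dirichlet problem for $\det D^2w=0$ on a bounded convex domain (the solution being the convex envelope of the boundary values, in the spirit of Theorem~\ref{exi_thm}) then forces $u_\infty=v_\infty$, contradicting $v_\infty/u_\infty\to0$ at the marked point. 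This blow-up is where the real work lies and is the main obstacle: one must choose the normalizations so that \emph{both} limits $u_\infty$ and $v_\infty$ are nondegenerate on the \emph{same} bounded limiting domain and carry well-defined, equal boundary data — the discrepancy between the linear lower bound $|u|\gtrsim\dist(\cdot,\partial\Omega)$ from convexity and the $\dist(\cdot,\partial\Omega)^{1/n}$ upper bound from Aleksandrov's estimate is exactly what this rescaling has to tame. Once $m=0$ and $M=\infty$ are ruled out, we are in the case $m=M=1$, i.e.\ $u=v$, which is the assertion.
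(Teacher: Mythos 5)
Your existence argument and the reduction to uniqueness are the same as the paper's (approximation by smooth uniformly convex domains, Tso's theorem on each, and Proposition \ref{sub_comp}), and your first uniqueness step is correct and rather elegant: with $m=\inf_\Omega(v/u)$ one has $v\le mu$ pointwise, the sub-homogeneity $\det D^2(amu)=m^p(-u)^p\le(-v)^p=\det D^2 v$ for $a=m^{(p-n)/n}>1$, and Lemma \ref{comp_prin} then forces $\inf_\Omega(v/u)\ge am>m$, ruling out $0<m<1$; symmetrically $1<M<\infty$ is excluded, so everything reduces to excluding $m=0$ (equivalently $M=\infty$). But that residual case is precisely where your argument stops being a proof. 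The available a priori bounds --- $|v|\gtrsim \dist(\cdot,\p\Omega)$ from convexity versus $|u|\lesssim \dist(\cdot,\p\Omega)^{1/n}$ from Aleksandrov's maximum principle --- are perfectly compatible with $v/u\to 0$ along a sequence tending to $\p\Omega$, so $m=0$ cannot be dismissed. Your proposed blow-up (normalizing the sections of $u$ via John's lemma and claiming that both rescaled limits solve $\det D^2w=0$ with ``coinciding boundary data'') is only a sketch: nothing shows that a normalization adapted to $u$ keeps the rescaled $v$ nondegenerate on the same limit domain, that the limits carry well-defined and equal boundary data, or that the intended contradiction survives when the marked points land on the boundary of the limit domain, where both limits vanish. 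You flag this yourself as ``the main obstacle''; as written it is a genuine gap, and it is the heart of the matter.

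The paper (following Tso's Proposition 4.1) avoids the boundary degeneracy altogether by comparing $v$ not with $u$ but with the dilation $u_{\Lambda}(x)=u(x/\Lambda)$ for $\Lambda>1$ (after translating so that the maximum of $u-v$ occurs at the origin). Since $u_{\Lambda}<0$ on $\p\Omega$ while $v=0$ there, the ratio $\eta_{\Lambda}=v/u_{\Lambda}$ tends to $0$ at $\p\Omega$ and hence attains its maximum at an \emph{interior} point $x_{\Lambda}$, where $\eta_{\Lambda}(x_{\Lambda})\ge 1+\e$ and $D^2v\ge \eta_{\Lambda}D^2u_{\Lambda}$. Taking determinants and using the maximality of $\eta_{\Lambda}$ gives $1\ge \eta_{\Lambda}^{\,n-p}(x_{\Lambda})\Lambda^{-2n}\ge(1+\e)^{n-p}\Lambda^{-2n}$, and letting $\Lambda\searrow 1$ yields the contradiction. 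If you wish to rescue your scheme, replacing the ratio $v/u$ by $v/u_{\Lambda}$ is exactly the device that forces the extremal point into the interior and removes any need for a boundary blow-up.
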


\begin{proof} 
 Let $u\in C(\overline{\Omega})$ be a nonzero convex solution to (\ref{DMA}) whose exsitence is guaranteed by Proposition \ref{sub_comp}. We show that it is unique.
By Proposition \ref{reg_prop}, $u\in C^{\infty}(\Omega)$ and $u$ is strictly convex in $\Omega$. The uniqueness of $u$ follows as in 
Tso \cite[Proposition 4.1]{Tso}. Since our setting is slightly different, we include the proof for reader's convenience. Suppose we have two nonzero convex
solutions $u$ and $v$ to (\ref{DMA}) with $u-v$ being positive somewhere in $\Omega$.  By translation of coordinates, we can assume that $x_0=0\in\Omega$ satisfies
$u(0)- v(0)=\max_{x\in \overline{\Omega}} (u-v)(x) =\delta>0.$\\
\noindent
For $1<\Lambda\leq 2$, consider for $x\in\Omega$, $u_{\Lambda}(x)= u(x/\Lambda)$
and $$\eta_{\Lambda}(x) = v(x)/u_{\Lambda}(x).$$ If $\dist(x,\p\Omega)\rightarrow 0$, then $\eta_{\Lambda}(x)\rightarrow 0.$
Note that $u, v<0$ in $\Omega$ and hence $$\eta_\Lambda(0) = v(0)/u(0) =[u(0)-\delta]/u(0) \geq 1+\e~ \text{for some }\e>0.$$
Therefore, the function $\eta_{\Lambda}$ attains its maximum value at $x_{\Lambda}\in\Omega$ with $\eta_{\Lambda}(x_{\Lambda})\geq 1+\e.$
At $x=x_{\Lambda}$, using $u, v\in C^{\infty}(\Omega)$, $\left(D^2 \eta_{\Lambda }(x_{\Lambda})\right)\leq 0$ and $u_{\Lambda}(x_{\Lambda})\leq 0$, we can compute
$$D^2 v= \eta_{\Lambda} D^2 u_{\Lambda} + D^2 \eta_{\Lambda } u_{\Lambda}\geq \eta_{\Lambda} D^2 u_{\Lambda}.$$
It follows that
$$|v(x_{\Lambda})|^p =\det D^2 v(x_{\Lambda}) \geq \eta_{\Lambda}^n\det D^2 u_{\Lambda}(x_{\Lambda}) =\eta_{\Lambda}^n \Lambda^{-2n}
\det D^2 u (x_{\Lambda}/\Lambda) = \eta_{\Lambda}^n \Lambda^{-2n} |u(x_{\Lambda}/\Lambda)|^p.$$
Using the maximality of $\eta_{\Lambda}$ at $x_{\Lambda}$, we find
$$1\geq \eta_{\Lambda}^n \Lambda^{-2n} \left [|u(x_{\Lambda}/\Lambda)|/|v(x_{\Lambda})|\right]^p= \eta_{\Lambda}^{n-p}(x_{\Lambda}) \Lambda^{-2n} \geq (1+\e)^{n-p}\Lambda^{-2n}.$$
Letting $\Lambda\searrow 1$, using $p<n$ and $\e>0$, we obtain a contradiction. Thus $u$ must be the unique nonzero convex solution to (\ref{DMA}) and the proof of the proposition is complete.
\end{proof}

We recall the following approximation lemmas; see Lemmas 3.1 and 3.2 in \cite{Har2}.
\begin{lem}
\label{lem31Har}
Let $p\in [0, n)$. Let $\Omega$ be an open, bounded uniformly convex domain and let $u\in C(\overline{\Omega})$ be a convex function with $u=0$ on $\p\Omega$ such that $J_{p, 1}(u,\Omega)<\infty.$ Then for any $\e>0$, there exists 
 a convex function $v\in C(\overline{\Omega})\cap C^2(\Omega)$ with $v=0$ on $\p\Omega$ and positive-definite Hessian such that $|J_{p, 1}(v,\Omega)-J _{p, 1}(u,\Omega)|<\e.$
\end{lem}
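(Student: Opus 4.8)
The plan is to mollify $u$ from inside to produce a smooth strictly convex competitor, then show the functional $J_{p,1}(\cdot,\Omega)$ changes little. Here is how I would carry it out. First I would extend $u$ slightly: since $\Omega$ is uniformly convex and $u=0$ on $\partial\Omega$, for each small $t>0$ consider the homothetic contraction, i.e. set $u_t(x) := u\big((1-t)^{-1}x\big)$ on the slightly larger domain $(1-t)^{-1}\Omega \supset\!\supset \Omega$ (after translating so the origin sits well inside $\Omega$), or alternatively work with the inf-convolution $u^\delta(x)=\inf_{y}\{u(y)+\tfrac1{2\delta}|x-y|^2\}$. The point of the extension is to have $u$ (suitably reparametrized) defined and convex on a neighborhood of $\overline\Omega$, so that the standard mollification $u^{(\eta)} = u * \rho_\eta$ is convex and finite on $\overline\Omega$ for all $\eta$ small. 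Then I would add a small multiple of a fixed smooth uniformly convex function vanishing on $\partial\Omega$ — e.g. $w\in C(\overline\Omega)\cap C^\infty(\Omega)$ solving $\det D^2 w = 1$ in $\Omega$, $w=0$ on $\partial\Omega$, which by Caffarelli–Nirenberg–Spruck is smooth up to the boundary and has positive-definite Hessian since $\Omega$ is uniformly convex and smooth — to force strict positive-definiteness of the Hessian of the competitor; so set $v := u^{(\eta)} + \sigma w + (\text{const})$, where the constant is chosen so that $v=0$ on $\partial\Omega$ after correcting for the fact that $u^{(\eta)}\neq 0$ on $\partial\Omega$ in general. The boundary correction is where I would be careful: mollification pushes the zero level set slightly inward, so one adjusts $v$ by an affine/comparison argument to restore the boundary condition at the cost of $o(1)$ in sup norm.

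The second main step is the continuity of $J_{p,1}$ along this approximation. Write $J_{p,1}(v,\Omega) = \frac{1}{n+1}\int_\Omega (-v)\det D^2 v\,dx - \frac{1}{p+1}\int_\Omega (-v)^{p+1}\,dx$. The second term is easy: $v\to u$ uniformly on $\overline\Omega$ and $(-v)^{p+1}$ is dominated, so $\int_\Omega(-v)^{p+1}\to\int_\Omega(-u)^{p+1}$ by dominated convergence. The first term is the delicate one. I would use the hypothesis $J_{p,1}(u,\Omega)<\infty$, which together with the easy finiteness of $\int(-u)^{p+1}$ gives $\int_\Omega(-u)\det D^2 u\,dx<\infty$, i.e. the Monge–Ampère measure of $u$ integrated against $|u|$ is finite. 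Then the weak-$*$ convergence of Monge–Ampère measures $\det D^2 u^{(\eta)}\rightharpoonup \det D^2 u$ under local uniform convergence of convex functions (the standard continuity of the Monge–Ampère operator, cf. Theorem \ref{compact_thm} in the local form) plus uniform convergence of $(-v)$ yields $\int_\Omega(-v)\det D^2 v\,dx \to \int_\Omega(-u)\det D^2 u\,dx$ — here one has to control the boundary layer $\Omega\setminus\Omega_\delta$ using the uniform smallness of $-v$ near $\partial\Omega$ (from uniform convergence to $u$ which vanishes on $\partial\Omega$) together with the uniform mass bound $\det D^2 v(\Omega)\le \det D^2 u^{(\eta)}(\Omega) + C\sigma \le C$ that follows from the construction (mollification does not increase total Monge–Ampère mass on a fixed compact set in the limit, and $\det D^2 w$ contributes $\sigma^n|\Omega|$).

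The main obstacle I anticipate is precisely this boundary-layer estimate for the nonlinear term $\int_\Omega(-v)\det D^2 v\,dx$: unlike the Laplacian, the Monge–Ampère measure of $v$ need not be absolutely continuous, can concentrate, and weak-$*$ convergence alone does not control $\int_{\Omega\setminus\Omega_\delta}(-v)\,d(\det D^2 v)$ unless one has a quantitative handle on how much mass of $\det D^2 v$ lives near $\partial\Omega$ and how small $-v$ is there. I would handle this by combining: (a) the Aleksandrov maximum principle, Theorem \ref{Alex_thm}, applied to $v$ to get $|v(x)|^n \le C(n)(\operatorname{diam}\Omega)^{n-1}\operatorname{dist}(x,\partial\Omega)\,\det D^2 v(\Omega)$, hence $-v = O(\operatorname{dist}(\cdot,\partial\Omega)^{1/n})$ uniformly in $\eta,\sigma$; and (b) splitting $\int_{\Omega\setminus\Omega_\delta}(-v)\,d(\det D^2 v) \le \big(\sup_{\Omega\setminus\Omega_\delta}(-v)\big)\det D^2 v(\Omega) \le C\delta^{1/n}\to 0$ uniformly, which kills the boundary contribution before taking $\eta\to0$. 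On the interior $\Omega_\delta$, local uniform convergence of $v\to u$ and weak-$*$ convergence of the Monge–Ampère measures give convergence of $\int_{\Omega_\delta}(-v)\,d(\det D^2 v)$ to $\int_{\Omega_\delta}(-u)\,d(\det D^2 u)$, and finally let $\delta\to0$ using $\int_\Omega(-u)\det D^2 u\,dx<\infty$. Choosing $\sigma=\sigma(\eta)\to0$ slowly and then $\eta\to0$ yields, for any prescribed $\e>0$, a competitor $v\in C(\overline\Omega)\cap C^2(\Omega)$ with positive-definite Hessian, $v=0$ on $\partial\Omega$, and $|J_{p,1}(v,\Omega)-J_{p,1}(u,\Omega)|<\e$, as claimed.
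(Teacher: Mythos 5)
Your overall plan --- mollify an extension of $u$, add a small strictly convex correction, and control the change in $J_{p,1}$ via Aleksandrov's maximum principle plus weak-$*$ convergence of Monge--Amp\`ere measures --- is sound in spirit, and the boundary-layer argument (uniform decay $-v\lesssim \dist(\cdot,\partial\Omega)^{1/n}$ from Theorem \ref{Alex_thm} combined with a uniform bound on the total Monge--Amp\`ere mass) is exactly the right tool for passing to the limit in the nonlinear term $\int(-v)\det D^2 v$. But there is a genuine gap in the step that restores the boundary condition. You set $v:=u^{(\eta)}+\sigma w+(\text{const})$ and say the constant makes $v=0$ on $\partial\Omega$; this cannot work, because after mollifying the extension of $u$, the trace $u^{(\eta)}\big|_{\partial\Omega}$ is a non-constant function on $\partial\Omega$, so no additive constant (and no affine function, which only zeros out finitely many boundary points) makes it vanish identically there. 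Subtracting a smooth extension of that trace would generically destroy convexity and positivity of the Hessian. The ``comparison argument'' you gesture at is the right idea, but making it rigorous effectively replaces the mollification construction by a different one: solve the Dirichlet problem $\det D^2\hat v_\eta=\det D^2\bigl(u^{(\eta)}+\sigma w\bigr)$ in $\Omega$, $\hat v_\eta=0$ on $\partial\Omega$. Then the boundary condition is exact, the right-hand side is smooth and strictly positive so $\hat v_\eta\in C^\infty(\Omega)$ with $D^2\hat v_\eta>0$ by Caffarelli's interior regularity, and the comparison principle (Lemma \ref{comp_prin}) gives $\|\hat v_\eta-(u^{(\eta)}+\sigma w)\|_{L^\infty(\Omega)}\le 2\max_{\partial\Omega}|u^{(\eta)}|\to 0$. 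In other words, the clean version of this lemma --- and, to my knowledge, the route in Hartenstine's original proof \cite{Har2} --- approximates the Monge--Amp\`ere measure $\mu:=\det D^2 u$ by smooth, strictly positive densities $\mu_k$ with $\mu_k\rightharpoonup\mu$ and uniformly bounded total masses, solves $\det D^2 v_k=\mu_k$, $v_k=0$ on $\partial\Omega$, and then runs your continuity argument; the boundary condition is built in from the start and nothing needs to be patched afterwards.

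Two smaller points worth fixing. First, you invoke Caffarelli--Nirenberg--Spruck to get $w$ smooth up to the boundary; the hypothesis only says $\Omega$ is uniformly convex, not that $\partial\Omega$ is smooth, so CNS does not apply --- but this is harmless because all you need is $w\in C(\overline\Omega)\cap C^\infty(\Omega)$ with positive-definite Hessian in $\Omega$, which holds on any bounded convex domain by interior regularity. Second, the estimate $\det D^2 v(\Omega)\le\det D^2 u^{(\eta)}(\Omega)+C\sigma$ is not correct as written: by Minkowski's determinant inequality, $\det(A+B)\ge\det A+\det B$, so the inequality goes the other way pointwise. A uniform mass bound $\det D^2 v(\Omega)\le C$ nonetheless holds because $Dv(\Omega)\subset Du^{(\eta)}(\Omega)+\sigma Dw(\Omega)$ is contained in a fixed bounded set, and that is all the boundary-layer estimate requires.
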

\begin{lem}
\label{lem32Har}
Let $p\in [0, n)$. Let $\Omega$ be an open, bounded convex domain, and suppose $v\in C(\overline{\Omega})$ is a convex function with $v=0$ on $\p\Omega$ satisfying $J_{p, 1}(v,\Omega)<\infty.$ 
Let $\{\Omega_m\}$ be a sequence of open, bounded, smooth and uniformly convex domains in $\R^n$, satisfying $\Omega_k\supset\Omega_{k+1}\supset\Omega$ for all $k$, and such that
$\Omega_m$ converges to $\Omega$ in the Hausdorff distance. Then for every $\e>0$, there exist an $m$ and 
 a convex function $w\in C(\overline{\Omega_m})$ with $w=0$ on $\p\Omega_m$ such that $|J_{p, 1}(v,\Omega)-J _{p, 1}(w,\Omega_m)|<\e.$
\end{lem}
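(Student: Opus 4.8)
The plan is to realise the required competitor $w$ on $\Omega_m$ (for a suitably large $m$) as an Aleksandrov solution of a Dirichlet problem whose Monge-Amp\`ere measure is, essentially, that of $v$ transplanted onto the slightly larger domain $\Omega_m$, and then to verify that the value of the functional barely changes. The one genuine complication is that $\det D^2 v$ may have infinite total mass on $\Omega$ even when $J_{p,1}(v,\Omega)<\infty$, so I would first reduce to the case of finite total mass.

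\textbf{Step 1 (reduction to finite total Monge-Amp\`ere mass).} We may assume $v\not\equiv 0$, since otherwise $w\equiv 0$ works; moreover, since an interior zero of a convex function vanishing on $\p\Omega$ would force $v\equiv 0$, we have $v<0$ in $\Omega$. Write $\mu=\det D^2 v$ for the Monge-Amp\`ere measure of $v$ (finite on compact subsets of $\Omega$), and recall $\int_\Omega(-v)\,d\mu=\int_\Omega(-v)\det D^2 v\,dx<\infty$. For $\delta>0$ let $v_\delta\in C(\overline\Omega)$ solve $\det D^2 v_\delta=\chi_{\{v\le -\delta\}}\mu$ in $\Omega$ with $v_\delta=0$ on $\p\Omega$ (Theorem~\ref{exi_thm}); by Chebyshev's inequality $\chi_{\{v\le -\delta\}}\mu(\Omega)\le\delta^{-1}\int_\Omega(-v)\,d\mu<\infty$, so $v_\delta$ has finite total mass. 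The comparison principle (Lemma~\ref{comp_prin}) gives $v\le v_\delta\le 0$, and $v_\delta$ is nonincreasing as $\delta\downarrow 0$; its pointwise limit $v_\ast$ is convex, extends continuously to $\overline\Omega$ with $v_\ast=0$ on $\p\Omega$ (squeeze $v\le v_\ast\le v_\delta$), and, since $\chi_{\{v\le -\delta\}}\mu\to\mu$ weakly-$\ast$ on $\Omega$ (using $v<0$ in $\Omega$) while $v_\delta\to v_\ast$ locally uniformly, it satisfies $\det D^2 v_\ast=\mu$; uniqueness in Theorem~\ref{exi_thm} forces $v_\ast=v$. Dominated convergence, with dominants $(-v)\in L^1(\mu)$ and $(-v)^{p+1}\in L^1(dx)$, then yields $J_{p,1}(v_\delta,\Omega)\to J_{p,1}(v,\Omega)$. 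Hence it suffices to prove the lemma with $v$ replaced by any $v_\delta$; that is, we may additionally assume $\mu(\Omega)<\infty$.

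\textbf{Step 2 (construction, closeness, and energy comparison).} Assume now $\mu=\det D^2 v$ has $\mu(\Omega)<\infty$, and regard $\mu$ as a finite Borel measure on $\Omega_m$ by extending it by zero outside $\Omega$. Let $w_m\in C(\overline{\Omega_m})$ be the (convex) Aleksandrov solution of $\det D^2 w_m=\mu$ in $\Omega_m$, $w_m=0$ on $\p\Omega_m$ (Theorem~\ref{exi_thm}); being convex and vanishing on $\p\Omega_m$, it satisfies $w_m\le 0$ in $\Omega_m$. Since $\Omega\subset\Omega_m$ and $\Omega_m\to\Omega$ in the Hausdorff distance, $\eta_m:=\sup_{x\in\p\Omega}\dist(x,\p\Omega_m)\to 0$ while $\diam\Omega_m\le\diam\Omega_1$; Aleksandrov's maximum principle (Theorem~\ref{Alex_thm}) on $\Omega_m$ then gives, for $x\in\p\Omega$,
\[
|w_m(x)|^n\le C(n)(\diam\Omega_m)^{n-1}\dist(x,\p\Omega_m)\,\mu(\Omega)\le C(n)(\diam\Omega_1)^{n-1}\eta_m\,\mu(\Omega),
\]
so $\delta_m:=\|w_m\|_{L^\infty(\p\Omega)}\to 0$. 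Because $\det D^2 w_m=\mu=\det D^2 v=\det D^2(v-\delta_m)$ in $\Omega$, with $w_m\le 0=v$ and $w_m\ge -\delta_m=v-\delta_m$ on $\p\Omega$, two applications of the comparison principle give $v-\delta_m\le w_m\le v$ on $\overline\Omega$, hence $\|w_m-v\|_{L^\infty(\overline\Omega)}\le\delta_m$. Now $\mu$ is supported in $\Omega$, so $\int_{\Omega_m}(-w_m)\det D^2 w_m\,dx=\int_\Omega(-w_m)\,d\mu$, which differs from $\int_\Omega(-v)\det D^2 v\,dx$ by at most $\delta_m\mu(\Omega)$; moreover Aleksandrov's maximum principle bounds $\|w_m\|_{L^\infty(\Omega_m)}$ uniformly in $m$, and since $|\Omega_m\setminus\Omega|=|\Omega_m|-|\Omega|\to 0$ and $w_m\to v$ uniformly on $\overline\Omega$, we get $\int_{\Omega_m}(-w_m)^{p+1}\,dx\to\int_\Omega(-v)^{p+1}\,dx$. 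Therefore $J_{p,1}(w_m,\Omega_m)\to J_{p,1}(v,\Omega)$, and for $m$ large enough $w:=w_m$ satisfies $|J_{p,1}(w,\Omega_m)-J_{p,1}(v,\Omega)|<\e$. Chaining this with Step 1 (first pick $\delta$ with $|J_{p,1}(v_\delta,\Omega)-J_{p,1}(v,\Omega)|<\e/2$, then pick $m$ with $|J_{p,1}(w_m,\Omega_m)-J_{p,1}(v_\delta,\Omega)|<\e/2$) finishes the proof.

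\textbf{Expected main difficulty.} Everything after Step 1 is a routine interplay of the comparison principle and Aleksandrov's maximum principle; the substantive point is Step 1, i.e.\ the reduction that lets us sidestep the (possibly infinite) total mass of $\det D^2 v$ — without it one cannot transplant $v$ to $\Omega_m$ while keeping the boundary values of the transplanted function under control. The only mild care needed in Step 1 is the weak-$\ast$ stability of Monge-Amp\`ere measures under the monotone local uniform convergence $v_\delta\to v$, combined with the uniqueness part of Theorem~\ref{exi_thm}.
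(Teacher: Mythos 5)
The paper does not give its own proof of this lemma; it simply cites Hartenstine \cite{Har2} (Lemma 3.2 there), so a line-by-line comparison to ``the paper's proof'' is not possible. Your argument is self-contained and, as far as I can check, correct. The core of Step~2 --- solving the Dirichlet problem on $\Omega_m$ with the Monge--Amp\`ere measure $\mu=\det D^2 v$ transplanted and extended by zero, then controlling the discrepancy with Aleksandrov's maximum principle and the comparison principle --- is the natural route and I believe essentially what Hartenstine does.

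What is genuinely worth flagging is your Step~1: the reduction to $\mu(\Omega)<\infty$ is \emph{necessary}, not cosmetic, and easy to overlook. The hypothesis $J_{p,1}(v,\Omega)<\infty$ controls only $\int_\Omega(-v)\,d\mu$, and $\mu(\Omega)=\infty$ is compatible with this (a radial example on $B_1\subset\R^2$ with $\det D^2 v\sim(1-|x|)^{-\alpha}$ for suitable $\alpha\in(1,5/3)$ exhibits this). Without the truncation $v_\delta$, the extended Dirichlet problem on $\Omega_m$ and the Aleksandrov bound giving $\|w_m\|_{L^\infty(\p\Omega)}\to 0$ both collapse, since those tools require $\mu(\Omega_m)<\infty$. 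Your handling of the limit $v_\delta\downarrow v$ (Dini plus weak-$\ast$ stability of Monge--Amp\`ere measures, then dominated convergence with dominants $(-v)\in L^1(\mu)$ and $(-v)^{p+1}\in L^1(dx)$) is correct.

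One small presentational remark: to conclude $v_\ast=v$ at the end of Step~1, cite Lemma~\ref{comp_prin} directly (two applications of the comparison principle give uniqueness, and it holds for arbitrary, possibly non-finite, Monge--Amp\`ere measures) rather than the uniqueness clause of Theorem~\ref{exi_thm}, since the statement of the latter is implicitly for finite measures and so does not literally cover the $\mu(\Omega)=\infty$ case you are in the middle of handling.
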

Now, we can give a proof of Theorem \ref{sub_thm}.
\begin{proof}[Proof of Theorem \ref{sub_thm}] Fix $0\leq p\neq n$.
Let $\{\Omega_m\}$ be a sequence of open, bounded, smooth and uniformly convex domains in $\R^n$, satisfying $\Omega_k\supset\Omega_{k+1}\supset\Omega$ for all $k$, and such that
$\Omega_m$ converges to $\Omega$ in the Hausdorff distance. Let $u_m$ and $u$ be as in Proposition \ref{sub_comp}. Then, $u\in C(\overline{\Omega})
\cap C^{\infty}(\Omega)$ is a nonzero convex Aleksandrov solution to
the Dirichlet problem (\ref{DMA}).
From the uniqueness result in Proposition \ref{sub_prop} for the case $0\leq p<n$, it remains to show that  if $0\leq p<n$ then
$u$ minimizes $J_{p, 1}(\cdot,\Omega)$ over all convex functions $w\in C(\overline{\Omega})$ with $w=0$ on $\p\Omega$. 

Indeed, for each $m$, the nonzero convex solution $u_m$ to (\ref{DMAm}), as given by \cite[Corollary 4.2]{Tso}, is the minimizer of $J_{p, 1}(\cdot,\Omega_m)$ over 
all functions $v\in C(\overline{\Omega_m})\cap C^2(\Omega_m)$ satisfying $v=0$ on $\p\Omega_m$ and whose Hessian is positive definite at all points in $\Omega_m$. By Lemma 
\ref{lem31Har}, we find that $u_m$ is also 
the minimizer of $J_{p, 1}(\cdot,\Omega_m)$ over all functions $w\in C(\overline{\Omega_m})$ satisfying $w=0$ on $\p\Omega_m$. 

We use the above fact together with the convergence $J_{p, 1}(u_m,\Omega_m)\rightarrow J_{p,1}(u,\Omega)$ proved in Proposition \ref{sub_comp}
to establish the minimality of $u$. Suppose that this is not the case. Then there exists a convex function $v\in C(\overline{\Omega})$ with $v=0$ on $\p\Omega$ such that $\e:=J_{p, 1}(u,\Omega)-J_{p, 1}(v,\Omega)>0.$ Then, for sufficiently large $k$, we must have $|J_{p, 1}(u_k,\Omega_k)-J_{p,1}(u,\Omega)|<\e/3$ and also by Lemma \ref{lem32Har}, for $k$ large enough, there is
a convex function $v_k\in C(\overline{\Omega_k})$ with $v_k=0$ on $\p\Omega_k$ such that $|J_{p, 1}(v,\Omega)-J _{p, 1}(v_k,\Omega_k)|<\e/3.$ It follows that 
there exists $k$ such that $J_{p, 1}(v_k,\Omega_k)<J_{p, 1} (u_k, \Omega_k)-\e/3.$ This contradicts the minimality property of $u_k$. The proof of the theorem is complete.
\end{proof}

\subsection{Degenerate Monge-Amp\`ere equations and Abreu's equation} We digress a bit in this subsection to discuss another context where
the degenerate Monge-Amp\`ere equations (\ref{DMA}) appear. It turns that (\ref{DMA}) also appear in the analysis of the Abreu equation.
The Abreu equation \cite{Ab} is a fourth order fully nonlinear partial differential equation of the form
\begin{equation}
\label{AMCE}
 U^{ij}w_{ij} =-f,~w = [\det D^{2} u]^{-1}\quad \text{in}~\Omega,
\end{equation}
where $u$ is a locally uniformly convex function defined in $\Omega\subset\R^{n}$, and $U = (U^{ij})$ is the matrix of cofactors of the 
Hessian matrix $D^{2}u$ of $u$.

When $\Omega$ is the interior of a bounded polytope (that is the intersection of finitely many closed half-spaces), equation (\ref{AMCE}) arises in the 
study of the existence of constant scalar curvature K\"ahler metrics for toric varieties \cite{D1} in complex geometry and the function $f$ corresponds to 
the scalar curvature of toric varieties. In this case, $\det D^2 u$ blows up, or equivalently, the inverse of the Hessian determinant $w$ vanishes, near the boundary $\p\Omega$. 
In obtaining (essentially sharp) blow up
rate of $\det D^2 u$ near the boundary $\p\Omega$, Donaldson \cite[Theorem 5]{D2} uses the maximum principle and an interesting Monge-Amp\`ere differential
 inequality  concerning negative 
 convex functions $R$ on
 $\Omega$ with 
 $$\lim_{\dist(x, \partial \Omega)\rightarrow 0} \frac{R(x)}{\dist(x,\p\Omega)}= -\infty,~\text{and}~\det D^2 R \geq |R|^{n-1}. $$
This study leads to (\ref{DMA}) with $p=n-1$ on open bounded convex polytopes; see \cite[p. 123]{D2}.

\section{Proof of Theorem \ref{lam_thm}}
\label{sec_proof}
In this section, we prove Theorem \ref{lam_thm} by proving Propositions \ref{exi_prop}, \ref{alm_Lip}, \ref{uni_lam} and \ref{lam_nice}.
\subsection{Existence of the Monge-Amp\`ere eigenfunctions}
The existence of nonzero convex solutions to (\ref{EVP_eq}) with $\lambda=\lambda[\Omega]$ defined by (\ref{lam_def}) follows from the following proposition.
\begin{prop}
\label{exi_prop}
Let $\Omega$ be an open bounded convex domain in $\R^n$.
Let $\lambda=\lambda[\Omega]$ be defined by (\ref{lam_def}). Then (\ref{lam_est}) holds and there exists a  nonzero convex solution 
$u\in C(\overline{\Omega})\cap C^{\infty}(\Omega)$ to (\ref{EVP_eq}).
\end{prop}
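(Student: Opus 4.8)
The plan is to construct the eigenfunction by taking a limit of solutions to the subcritical problems \eqref{DMA0} as $p \nearrow n$, following the strategy of Tso. First I would fix $\lambda = \lambda[\Omega]$ as defined by \eqref{lam_def}; note that the estimates \eqref{lam_est} are already available from Corollary \ref{eigen_est}(ii), so that part is done. For each $p \in [0,n)$ let $u_p \in C(\overline{\Omega}) \cap C^{\infty}(\Omega)$ be the unique nonzero convex Aleksandrov solution of $\det D^2 u_p = |u_p|^p$ in $\Omega$, $u_p = 0$ on $\p\Omega$, whose existence and uniqueness is guaranteed by Theorem \ref{sub_thm} (and Proposition \ref{sub_prop}). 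By Lemma \ref{Alem}(iii) with $M=1$, we have the two-sided bound $c(n,p)|\Omega|^{2/(n-p)} \le \|u_p\|_{L^\infty(\Omega)} \le C(n,p)|\Omega|^{2/(n-p)}$; after rescaling $v_p := u_p / \|u_p\|_{L^\infty(\Omega)}$ we obtain convex functions with $\|v_p\|_{L^\infty(\Omega)}=1$, $v_p=0$ on $\p\Omega$, solving $\det D^2 v_p = \mu_p |v_p|^p$ where $\mu_p := \|u_p\|_{L^\infty(\Omega)}^{n-p}$, and Lemma \ref{Alem}(iii) gives $c(n,p)|\Omega|^{-2} \le \mu_p \le C(n,p)|\Omega|^{-2}$.

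The next step is to identify the limit of $\mu_p$ as $p \nearrow n$ with $\lambda[\Omega]$. On one hand, multiplying the equation for $v_p$ by $|v_p|$ and integrating, $\mu_p = \int_\Omega (-v_p)\det D^2 v_p\,dx \big/ \int_\Omega (-v_p)^{p+1}\,dx$. Since $v_p$ is an admissible competitor in the variational problem \eqref{lam_def} (with exponent $n+1$ in the denominator there rather than $p+1$), one needs to compare $\int (-v_p)^{p+1}$ and $\int (-v_p)^{n+1}$; because $0 \le -v_p \le 1$ we have $\int(-v_p)^{n+1} \le \int(-v_p)^{p+1}$, hence $\int (-v_p)\det D^2 v_p / \int (-v_p)^{n+1} \ge \mu_p$, which shows $\lambda[\Omega] \le \liminf_{p\to n}\mu_p$ only after I also control the lower direction. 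For the reverse inequality I would use that $v_p$ minimizes $J_{p,\mu_p}(\cdot,\Omega)$ (this is the content of the minimality assertion in Theorem \ref{sub_thm}, with $1$ replaced by $\mu_p$ as in the Remark following it), or more directly compare $\mu_p$ against an arbitrary competitor $w$ in \eqref{lam_def}: testing the minimality of $v_p$ against $t w$ and optimizing in $t>0$ yields $\mu_p \le \frac{1}{(p+1)^{?}}\cdots$ — here I would instead argue that for any fixed admissible $w$, the scaling-invariant Rayleigh quotient with exponent $p+1$ is minimized by $v_p$, and this quotient converges to the one with exponent $n+1$ as $p\to n$ by dominated convergence (using $\|w\|_\infty$-control and $0 \le -w$). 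This gives $\limsup_{p\to n}\mu_p \le \lambda[\Omega]$, and combined with the previous bound, $\mu_p \to \lambda[\Omega]$.

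Having pinned down the limit of the spectral constants, the final step is a compactness argument to extract the eigenfunction. The functions $\{v_p\}$ are convex, satisfy $\|v_p\|_{L^\infty(\Omega)}=1$, vanish on $\p\Omega$, and by the Aleksandrov maximum principle (Theorem \ref{Alex_thm}) combined with the uniform bound $\int_\Omega \det D^2 v_p\,dx = \mu_p \int_\Omega |v_p|^p\,dx \le C(n,\Omega)$, they are uniformly $C^{0,1/n}$ up to the boundary (as in the proof of Proposition \ref{sub_comp}). By Arzelà--Ascoli, along a subsequence $p_j \nearrow n$, $v_{p_j} \to u$ locally uniformly in $\Omega$ for some convex $u$ with $\|u\|_{L^\infty(\Omega)}=1$ (the uniform boundary modulus of continuity forces $u=0$ on $\p\Omega$ and prevents degeneration). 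The measures $\mu_{p_j}|v_{p_j}|^{p_j}\,dx$ converge weakly$^*$ to $\lambda[\Omega]|u|^n\,dx$ (here one uses $\mu_{p_j}\to\lambda[\Omega]$, $v_{p_j}\to u$ uniformly on compacts, and equi-integrability from the $L^\infty$ bound), so Theorem \ref{compact_thm} identifies $u$ as the Aleksandrov solution of $\det D^2 u = \lambda[\Omega]|u|^n$ in $\Omega$, $u=0$ on $\p\Omega$. Finally Proposition \ref{reg_prop} (with $p=n$) upgrades $u$ to $C^\infty(\Omega)$ and yields strict convexity, completing the proof. The main obstacle I anticipate is the passage $\mu_p \to \lambda[\Omega]$: matching the subcritical exponent $p+1$ in the Rayleigh quotient for $v_p$ with the critical exponent $n+1$ in \eqref{lam_def} requires care, since $v_p$ is only an approximate competitor for \eqref{lam_def} and the minimality of $v_p$ is for the $p$-functional; the uniform bound $0 \le -v_p \le 1$ is what makes the two quotients comparable and lets dominated convergence close the gap.
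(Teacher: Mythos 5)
Your overall strategy --- solve the subcritical problems for $p<n$ via Theorem \ref{sub_thm}, normalize, and pass to the limit $p\nearrow n$ using the Aleksandrov maximum principle, Theorem \ref{compact_thm} and Proposition \ref{reg_prop} --- is the same as the paper's, and your endgame (uniform $C^{0,1/n}$ bounds, Arzel\`a--Ascoli, weak$^*$ convergence of the right-hand sides, interior smoothness) is fine. The genuine gap is exactly the step you yourself flag as ``the main obstacle'': the identification $\mu_p\to\lambda[\Omega]$, which you do not establish in either direction. For $\lambda[\Omega]\le\liminf_{p\to n}\mu_p$, the comparison $\int_\Omega(-v_p)^{n+1}\le\int_\Omega(-v_p)^{p+1}$ goes the wrong way: it gives $\lambda[\Omega]\le \mu_p\cdot \int_\Omega(-v_p)^{p+1}\big/\int_\Omega(-v_p)^{n+1}$ with a correction factor that is $\ge 1$, so nothing follows. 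What closes this direction is the H\"older inequality $\int_\Omega(-v_p)^{p+1}\,dx\le \bigl(\int_\Omega(-v_p)^{n+1}\,dx\bigr)^{\frac{p+1}{n+1}}|\Omega|^{\frac{n-p}{n+1}}$ combined with the uniform lower bound $\int_\Omega(-v_p)^{n+1}\,dx\ge c(n)|\Omega|$ from Lemma \ref{Alem}(i), which shows the correction factor is at most $(1/c(n))^{\frac{n-p}{n+1}}\to 1$. For the reverse inequality $\limsup_{p\to n}\mu_p\le\lambda[\Omega]$, your proposed mechanism is mis-stated: the quotient $\int_\Omega(-w)\det D^2w\,dx\big/\int_\Omega(-w)^{p+1}\,dx$ is \emph{not} scale-invariant when $p<n$ (it scales like $t^{n-p}$ under $w\mapsto tw$, so its infimum over all competitors is $0$), hence ``$v_p$ minimizes the Rayleigh quotient with exponent $p+1$'' is false as written, and your ``optimizing in $t$'' computation is left with a literal question mark. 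The correct scale-invariant object is $Q_p(w)=\int_\Omega(-w)\det D^2w\,dx\big/\bigl(\int_\Omega(-w)^{p+1}\,dx\bigr)^{\frac{n+1}{p+1}}$; the minimality of $v_p$ for $J_{p,\mu_p}$ (Theorem \ref{sub_thm} with the Remark) does yield $Q_p(v_p)\le Q_p(w)$ for every competitor $w$, and since $Q_p(v_p)=\mu_p\bigl(\int_\Omega(-v_p)^{p+1}\bigr)^{\frac{p-n}{p+1}}=\mu_p(1+o(1))$ while $Q_p(w)$ tends to the Rayleigh quotient in (\ref{lam_def}), this would finish the argument --- but none of this appears in your writeup. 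Note also that you cannot bypass the identification by invoking the uniqueness of the eigenvalue (Proposition \ref{uni_lam}): that result only says all eigenvalues coincide, not that they equal the infimum (\ref{lam_def}); Proposition \ref{exi_prop} is precisely what shows the infimum is attained.

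For comparison, the paper avoids the whole identification issue by inserting $\lambda=\lambda[\Omega]$ into the approximating equation from the start, solving $\det D^2u_p=\lambda|u_p|^p$, so that the limit automatically carries the right constant; the entire content then reduces to a two-sided bound on $\|u_p\|_{L^\infty(\Omega)}$ uniform in $p$, obtained from above by the H\"older argument just described together with the definition of $\lambda$ as an infimum, and from below by testing the minimality of $J_{p,\lambda}$ against (a multiple of) a near-minimizer of (\ref{lam_def}). These two bounds are mathematically equivalent to your $\mu_p\to\lambda[\Omega]$, so your route is salvageable, but as written the central step is missing. Two minor further points: $\mu_p=\|u_p\|_{L^\infty(\Omega)}^{p-n}$, not $\|u_p\|_{L^\infty(\Omega)}^{n-p}$; and since you let $p\to n$, you should note that the constants $c(n,p)$, $C(n,p)$ from Lemma \ref{Alem} remain bounded and bounded away from zero as $p\to n$ (they do, but this needs saying).
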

\begin{proof}
By Corollary \ref{eigen_est}, we have (\ref{lam_est}). For the existence of nonzero convex solutions to (\ref{EVP_eq}), we argue as in Tso \cite[Corollary 4.3]{Tso}.
For each $0\leq p<n$, by Theorem \ref{sub_thm}, there exists a unique nonzero convex solution $u_p\in C(\overline{\Omega})\cap C^{\infty}(\Omega)$ to
\begin{equation*}
 \left\{
 \begin{alignedat}{2}
   \det D^{2} u~&=\lambda|u|^{p} \h~&&\text{in} ~\Omega, \\\
u &=0\h~&&\text{on}~\p \Omega.
 \end{alignedat}
 \right.
\end{equation*}
Moreover, $u_p$ minimizes the functional $J_{p,\lambda}(\cdot)= J_{p,\lambda}(\cdot,\Omega)$, defined 
by (\ref{J_func}), over all convex functions $u\in C(\overline{\Omega})$ with $u=0$ on $\p\Omega$. 
Our proof, however, does not use the uniqueness property of $u_p$.

We will bound $\|u_p\|_{L^{\infty}(\Omega)}$ uniformly (in $p$) from above and below. Then, 
we can argue as in Proposition \ref{sub_comp} to show that, up to extracting a subsequence, $\{u_p\}$ converges, as $p\nearrow n$,  
 uniformly on 
 compact subsets of $\Omega$ to a nonzero convex solution $u\in C(\overline{\Omega})\cap C^{\infty}(\Omega)$ of (\ref{EVP_eq}).

{\it Step 1 (Bound $\|u_p\|_{L^{\infty}(\Omega)}$ from above).} Let $\alpha_p =\|u_p\|_{L^{\infty}(\Omega)}$ and $v_p= u_p/\alpha_p.$ Then $v_p\in C(\overline{\Omega})
\cap C^{\infty}(\Omega)$
is convex with $v_p=0$ on $\p\Omega$ and $\|v_p\|_{L^{\infty}(\Omega)}=1$. 
From the equation for $u_p$, we deduce that
$$\alpha_p^{n-p}\det D^2 v_p = \lambda |v_p|^p~\text{in}~\Omega.$$
Multiplying both sides by $|v_p|$ and integrating over $\Omega$, we get
\begin{equation}\displaystyle\alpha_p = \left[ \frac{\lambda \int_{\Omega}|v_p|^{p+1}~dx}{\int_{\Omega}|v_p|\det D^2 v_p~dx}\right]^{\frac{1}{n-p}}.
 \label{al_eq}
\end{equation}
It follows that
\begin{equation}
J_{p,\lambda}(u_p)= J_{p,\lambda}(\alpha_p v_p)= \lambda \alpha_p^{p+1} \|v_p\|^{p+1}_{L^{p+1}(\Omega)} (p-n)(n+1)^{-1}(p+1)^{-1}.
 \label{upnorm}
\end{equation}
By the H\"older inequality, we have
$$\int_{\Omega} |v_p|^{p+1}~dx = \|v_p\|^{p+1}_{L^{p+1}(\Omega)}\leq \|v_p\|^{p+1}_{L^{n+1}(\Omega)} |\Omega|^{\frac{n-p}{n+1}}.$$
Thus, using (\ref{al_eq}), the definition of $\lambda$ in (\ref{lam_def}) and Lemma \ref{Alem}, we find
$$\alpha_p \leq  \left[ \frac{\lambda \int_{\Omega}|v_p|^{n+1}~dx}{\int_{\Omega}|v_p|\det D^2 v_p~dx}\right]^{\frac{1}{n-p}}
 \left[\frac{|\Omega|}{\int_{\Omega}|v_p|^{n+1}~dx}\right]^{\frac{1}{n+1}} \leq \left[\frac{|\Omega|}{\int_{\Omega}|v_p|^{n+1}~dx}\right]^{\frac{1}{n+1}}
 \leq C(n).
$$
{\it Step 2 (Bound $\|u_p\|_{L^{\infty}(\Omega)}$ from below).} This step uses the minimality property of $u_p$ with respect to the functional $J_{p,\lambda}(\cdot,\Omega)$ over 
all convex
functions $w\in C(\overline{\Omega})$ with $w=0$ on $\p\Omega$. We will use
$$J_{p,\lambda}(u_p)\leq J_{p,\lambda} (\alpha v)$$
for a special positive number $\alpha$ and a convex function $v\in C(\overline{\Omega})$ 
with $v=0$ on $\p\Omega$ and $\|v\|_{L^{\infty}(\Omega)}=1$. Motivated by the calculation in {\it Step 1}, once $v$ is determined, we can choose
$$\alpha = \left[ \frac{\lambda \int_{\Omega}|v|^{p+1}~dx}{\int_{\Omega}|v|\det D^2 v~dx}\right]^{\frac{1}{n-p}}.$$
Since $\|v\|_{L^{\infty}(\Omega)}=1$, $$\alpha\geq \left[ \frac{\lambda \int_{\Omega}|v|^{n+1}~dx}{\int_{\Omega}|v|\det D^2 v~dx}\right]^{\frac{1}{n-p}}.$$
By the definition of $\lambda$ in (\ref{lam_def}), we can choose a convex function $v\in C(\overline{\Omega})$ with $v=0$ on $\p\Omega$ and $\|v\|_{L^{\infty}(\Omega)}=1$ so that $\alpha\geq \frac{1}{2}$.
Then $\|v\|_{L^{p+1}(\Omega)}^{p+1}\geq \|v\|_{L^{n+1}(\Omega)}^{n+1}\geq A$ where 
$$ A:=\inf\left\{\int_{\Omega} |u|^{n+1}~dx: u \text{ is convex in } \Omega, u\in C(\overline{\Omega}), u=0~\text{on}~\p\Omega,~\|u\|_{L^{\infty}(\Omega)}=1\right\}$$
and the minimality of $u_p$ now leads to
\begin{eqnarray*}J_{p,\lambda}(u_p) \leq J_{p,\lambda} (\alpha v)&=& 
\lambda \alpha^{p+1} \|v\|_{L^{p+1}(\Omega)}^{p+1} (n+1)^{-1} (p+1)^{-1} (p-n)\\ &\leq& \lambda 2^{-p-1} A(n+1)^{-1} (p+1)^{-1} (p-n)<0.
\end{eqnarray*}
Recalling (\ref{upnorm}), we have for $\alpha_p=\|u_p\|_{L^{\infty}(\Omega)} $ and $\|v_p\|_{L^{\infty}(\Omega)}=1$, 
$$\lambda \alpha_p^{p+1} \|v_p\|^{p+1}_{L^{p+1}(\Omega)} (p-n)(n+1)^{-1}(p+1)^{-1}\leq \lambda 2^{-p-1} A(n+1)^{-1} (p+1)^{-1} (p-n)<0.$$
Hence, using  $\|v_p\|_{L^{p+1}(\Omega)}\leq |\Omega|^{\frac{1}{p+1}}$ which follows from $\|v_p\|_{L^{\infty}(\Omega)}=1$, and then invoking Lemma \ref{Alem}, we obtain
$$\|u_p\|_{L^{\infty}(\Omega)}=\alpha_p \geq 2^{-1} A^{\frac{1}{p+1}}\|v_p\|^{-1}_{L^{p+1}(\Omega)}\geq 2^{-1} A^{\frac{1}{p+1}} |\Omega|^{-\frac{1}{p+1}}\geq 2^{-1}[c(n)]^{\frac{1}{p+1}}\geq 2^{-1}[c(n)]^{\frac{1}{n+1}}.$$
We have established the uniform bound for $\|u_p\|_{L^{\infty}(\Omega)}$ from above and below, and thus completing the proof of the proposition.
\end{proof}
The following proposition gives another way to construct, via approximation, nonzero convex solutions to (\ref{EVP_eq}) with $\lambda$ replaced by 
a (possible different) positive constant $\Lambda$.
\begin{prop}
\label{Lam_prop}
Let $\Omega$ be an open bounded convex domain in $\R^n$.
Let $\{\Omega_m\}$ be a sequence of open, bounded, smooth and uniformly convex domains in $\R^n$ 
that converges to $\Omega$ in the Hausdorff distance. For each $m$, we
take $u_m\in C^{1,1}(\overline{\Omega_m})\cap C^{\infty}(\Omega_m)$ with $\|u_m\|_{L^{\infty}(\Omega_m)}=1$ to be 
a nonzero convex eigenfunction of the Monge-Amp\`ere operator on $\Omega_m$:
\begin{equation*}
\left\{
 \begin{alignedat}{2}
   \det D^2 u_m~& = \lambda(\Omega_m)|u_m|^n~&&\text{in} ~  \Omega_m, \\\
u_m &= 0~&&\text{on}~ \p\Omega_m.
 \end{alignedat} 
  \right.
  \end{equation*}
Then we can find a subsequence of $\{m\}_{m=1}^{\infty}$, still denoted by $\{m\}_{m=1}^{\infty}$, such that  
$$\lim_{m\rightarrow \infty}\lambda(\Omega_m)=\Lambda>0$$
and $\{u_m\}$ converges uniformly on compact subsets of $\Omega$ to a nonzero convex solution of
\begin{equation}
\label{EPLam}
\left\{
 \begin{alignedat}{2}
   \det D^2 u~& = \Lambda|u|^n~&&\text{in} ~  \Omega, \\\
u &= 0~&&\text{on}~ \p\Omega.
 \end{alignedat} 
  \right.
  \end{equation}
\end{prop}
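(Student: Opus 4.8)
The plan is to adapt the compactness scheme from the proof of Proposition~\ref{sub_comp} (with exponent $p=n$ and with the varying constants $\lambda(\Omega_m)$ in place of $1$), to extract a limit along a subsequence, and then to identify it as a solution of (\ref{EPLam}). First I would extract a convergent subsequence of eigenvalues: by Corollary~\ref{eigen_est}(i) one has $c(n)|\Omega_m|^{-2}\le\lambda(\Omega_m)\le C(n)|\Omega_m|^{-2}$, and since $\Omega_m\to\Omega$ in the Hausdorff distance, $|\Omega_m|\to|\Omega|\in(0,\infty)$ and $\sup_m\diam\Omega_m<\infty$; hence $\{\lambda(\Omega_m)\}$ lies between two positive constants depending only on $n$ and $\Omega$, and, passing to a subsequence, $\lambda(\Omega_m)\to\Lambda$ with $c(n)|\Omega|^{-2}\le\Lambda\le C(n)|\Omega|^{-2}$, so in particular $\Lambda>0$.

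Next I would run the uniform estimates. Applying Aleksandrov's maximum principle (Theorem~\ref{Alex_thm}) to $u_m$ and using $\det D^2u_m=\lambda(\Omega_m)|u_m|^n$ together with $\|u_m\|_{L^\infty(\Omega_m)}=1$ gives
\[
|u_m(x)|^n\le C(n)(\diam\Omega_m)^{n-1}\,\lambda(\Omega_m)\Big(\int_{\Omega_m}|u_m|^n\,dx\Big)\dist(x,\p\Omega_m)\le C(n,\Omega)\,\dist(x,\p\Omega_m)
\]
for all $x\in\Omega_m$. Exactly as in Proposition~\ref{sub_comp}, this boundary estimate together with the convexity of the $u_m$ gives a uniform bound on the global $C^{0,1/n}(\overline{\Omega_m})$ norms; Arzel\`a--Ascoli then yields, after passing to a further subsequence, a convex function $u\in C(\overline\Omega)$ with $u_m\to u$ locally uniformly in $\Omega$, $u=0$ on $\p\Omega$, and $|u|\le1$ in $\Omega$.

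Then I would identify the equation and check that $u$ is nonzero. The functions $f_m:=\lambda(\Omega_m)|u_m|^n$, extended by zero off $\Omega_m$, are bounded by $C(n,\Omega)$ and converge a.e. to $\Lambda|u|^n\chi_\Omega$; since the measure of the symmetric difference of $\Omega_m$ and $\Omega$ tends to $0$, and since the displayed estimate (and its limit) forces $|u_m|$ and $|u|$ to be uniformly small near $\p\Omega$ so that no mass concentrates there, dominated convergence gives $f_m\to\Lambda|u|^n\chi_\Omega$ in $L^1(\R^n)$, hence $\det D^2u_m\to\Lambda|u|^n\,dx$ weakly$^\ast$. Combined with the local uniform convergence $u_m\to u$, the compactness theorem for the Monge--Amp\`ere equation (Theorem~\ref{compact_thm}) identifies $u$ as the Aleksandrov solution of (\ref{EPLam}). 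Finally, picking $x_m\in\Omega_m$ with $u_m(x_m)=-1$, the displayed estimate forces $\dist(x_m,\p\Omega_m)\ge\delta_0$ for a fixed $\delta_0=\delta_0(n,\Omega)>0$, so for $m$ large all the $x_m$ lie in a fixed compact $K\subset\Omega$; along a subsequence $x_m\to x_\ast\in K$, and uniform convergence on $K$ gives $u(x_\ast)=\lim_m u_m(x_m)=-1$. Hence $\|u\|_{L^\infty(\Omega)}=1$ and $u$ is a nonzero convex solution of (\ref{EPLam}).

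I expect the only genuinely delicate point to be the weakly$^\ast$ convergence of the Monge--Amp\`ere measures in the third step, where the mismatch between the moving domains $\Omega_m$ and $\Omega$ must be absorbed; this is exactly what the uniform boundary H\"older estimate is for, since it prevents the mass of $\det D^2u_m$ from escaping to $\p\Omega$. Everything else is a direct transcription of the argument already used for Proposition~\ref{sub_comp}.
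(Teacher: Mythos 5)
Your proof is correct and follows essentially the same approach as the paper: extract a convergent subsequence of $\lambda(\Omega_m)$ using the bounds of Corollary~\ref{eigen_est}, then run the compactness argument of Proposition~\ref{sub_comp} (uniform boundary H\"older estimate via Aleksandrov, Arzel\`a--Ascoli, the compactness theorem for the Monge--Amp\`ere equation, and non-triviality of the limit via the maximum point staying away from the boundary). The paper compresses the second half into ``argue as in Proposition~\ref{sub_comp}''; you have simply spelled out those steps, correctly.
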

\begin{proof}
 By Corollary \ref{eigen_est}, we have the estimates
 $$c(n) |\Omega_m|^{-2}\leq \lambda(\Omega_m)\leq C(n)|\Omega_m|^{-2}.$$
 Thus, the sequence $\{\lambda(\Omega_m)\}$ is uniformly bounded from above and below. Thus, up to extracting a subsequence, $\{\lambda(\Omega_m)\}$ converges to a positive
 constant $\Lambda>0$. Since $\|u_m\|_{L^{\infty}(\Omega_m)}=1$, we can argue as in Proposition \ref{sub_comp} to show that, up to extracting a subsequence further, $\{u_m\}$ converges 
 uniformly on 
 compact subsets of $\Omega$ to a nonzero convex solution of (\ref{EPLam}).
\end{proof}
\subsection{Global almost Lipschitz property of the Monge-Amp\`ere eigenfunctions} In this subsection, 
we prove that the Monge-Amp\`ere eigenfunctions in Proposition \ref{exi_prop} are almost Lipschitz globally, that is 
$u\in C^{0,\beta}(\overline{\Omega})$ for all $\beta\in (0, 1)$.  
\begin{prop}
 \label{alm_Lip}
 Let $\Omega$ be an open bounded convex domain in $\R^n$.
 Let  $u\in C(\overline{\Omega})$ be a nonzero convex eigenfunction of (\ref{EVP_eq}) with $\lambda=\lambda[\Omega]$ defined by (\ref{lam_def}). Then,
 for all $\beta\in (0, 1)$, we have $u\in C^{0,\beta}(\overline{\Omega})$ and the estimate
 \begin{equation}
 \label{ualmLip}
 |u(x)|\leq C(n, \beta, \emph{diam} \Omega) [\emph{dist}(x,\p\Omega) ]^{\beta}\|u\|_{L^{\infty}(\Omega)}~\text{for all}~x\in\Omega.
 \end{equation}
 \end{prop}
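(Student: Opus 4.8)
The goal is the pointwise estimate (\ref{ualmLip}); once it is known, $u\in C^{0,\beta}(\overline{\Omega})$ follows from a routine argument using only the convexity of $u$ and $u=0$ on $\p\Omega$, which I indicate below. Since (\ref{EVP_eq}) and both sides of (\ref{ualmLip}) are homogeneous in $u$, I may assume $\|u\|_{L^\infty(\Omega)}=1$; write $d(x)=\dist(x,\p\Omega)$ and $D=\diam\Omega$. The plan is a bootstrap on the H\"older exponent: I prove by induction the family of estimates $|u(x)|\le C_k\,d(x)^{\gamma_k}$ on $\Omega$, where $\gamma_0=1/n$ and $\gamma_{k+1}=\frac{n\gamma_k+2}{n+2}$. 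Since $\gamma_{k+1}-\gamma_k=\frac{2(1-\gamma_k)}{n+2}>0$ and $1$ is the only fixed point of the recursion, $\gamma_k\nearrow 1$; hence given $\beta\in(0,1)$ there is a $k$ with $\gamma_k\ge\beta$, and $d(x)^{\gamma_k}\le D^{\gamma_k-\beta}d(x)^{\beta}$ then yields (\ref{ualmLip}). The base case $k=0$ is Aleksandrov's maximum principle (Theorem~\ref{Alex_thm}) together with the eigenvalue bounds of Corollary~\ref{eigen_est} and Lemma~\ref{Alem}: using $\det D^2u=\lambda|u|^n$ and $\|u\|_{L^\infty(\Omega)}=1$,
\[
|u(x)|^n\le C(n)\,D^{n-1}d(x)\int_\Omega\det D^2u\,dx=C(n)\,D^{n-1}d(x)\,\lambda\int_\Omega|u|^n\,dx\le C(n,D)\,d(x).
\]

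For the inductive step, fix $x_0\in\Omega$ and a nearest boundary point $z_0\in\p\Omega$; after a rigid motion assume $z_0=0$, that $\{x_n=0\}$ is a supporting hyperplane of $\Omega$ with $\Omega\subset\{x_n>0\}$, and that $x_0=d_0 e_n$, $d_0=d(x_0)$, where $e_n$ is the $n$-th coordinate vector. An elementary but crucial observation is that $d(x)\le x_n$ for every $x\in\Omega$ (moving from $x$ in the direction $-e_n$ exits $\Omega$ after distance at most $x_n$). Let $h\ge d_0$ be a parameter to be optimized, put $V_h=\Omega\cap\{x_n<h\}$, and compare $u$ on $V_h$ with the explicit convex barrier
\[
\phi(x)=g(x_n)+\tfrac{\nu}{2}\bigl(|x'|^2-D^2\bigr),\qquad x=(x',x_n),
\]
where $\nu>0$ and $g$ solves $g''(t)=\frac{\lambda C_k^n}{\nu^{n-1}}t^{n\gamma_k}$ with $g(0)=0$ and linear part chosen so that $g\le 0$ on $[0,h]$ and $g(h)\le-C_k h^{\gamma_k}$. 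Then $\det D^2\phi=\nu^{n-1}g''(x_n)=\lambda C_k^n x_n^{n\gamma_k}\ge\lambda C_k^n d(x)^{n\gamma_k}\ge\lambda|u|^n=\det D^2u$ in $V_h$, by the induction hypothesis and $d(x)\le x_n$; moreover $\phi\le 0\le u$ on $\p\Omega\cap\{x_n\le h\}$ (since $|x'|\le|x|\le D$ there, as $z_0=0\in\p\Omega$), and $\phi\le g(h)\le-C_k h^{\gamma_k}\le u$ on $\Omega\cap\{x_n=h\}$ (using $|u|\le C_k h^{\gamma_k}$ on that slice, valid because $d\le h$ there). The comparison principle (Lemma~\ref{comp_prin}) gives $\phi\le u$ on $V_h$, hence $|u(x_0)|\le-\phi(x_0)=-g(d_0)+\frac{\nu}{2}D^2$. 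The right-hand side is an explicit function of $\nu$ and $h$; minimizing over $\nu>0$ and then over $h\ge d_0$ produces exactly $|u(x_0)|\le C_{k+1}\,d_0^{\gamma_{k+1}}$ with $\gamma_{k+1}=\frac{n\gamma_k+2}{n+2}$. As $x_0$ was arbitrary, this closes the induction.

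From (\ref{ualmLip}) one recovers $u\in C^{0,\beta}(\overline{\Omega})$ as follows. Since $u\in C^\infty(\Omega)$ (Proposition~\ref{reg_prop}) is convex with $u=0$ on $\p\Omega$, one has $|Du(x)|\le|u(x)|/d(x)\le C\,d(x)^{\beta-1}$ in $\Omega$. Given $x,y\in\Omega$ with $d(x)\le d(y)$: if $|x-y|\le\frac12 d(x)$, the segment $[x,y]$ stays at distance $\ge\frac12 d(x)$ from $\p\Omega$, and integrating the gradient bound gives $|u(x)-u(y)|\le C|x-y|\,d(x)^{\beta-1}\le C|x-y|^{\beta}$; if $|x-y|>\frac12 d(x)$, then $d(y)\le d(x)+|x-y|\le 3|x-y|$, so $|u(x)-u(y)|\le|u(x)|+|u(y)|\le C\bigl(d(x)^\beta+d(y)^\beta\bigr)\le C|x-y|^{\beta}$.

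The principal difficulty lies in the inductive step: the barrier $\phi$ must dominate $\det D^2u=\lambda|u|^n$ (of size at most $\lambda C_k^n x_n^{n\gamma_k}$) throughout the thin slab $V_h$ while staying below $u$ on the inner slice $\{x_n=h\}$ — which forces $\phi$ to be rather negative there — and yet $-\phi(x_0)$ must come out small; the gain in the exponent is extracted only after the joint optimization in $\nu$ and $h$. One must also verify that this optimization is admissible, i.e. that the optimal $h$ indeed satisfies $h\ge d_0$ once $d_0$ is below a threshold controlled by $n$, $D$ and the bounds on $\lambda$ (the complementary range $d_0\gtrsim 1$ being handled by the trivial bound $|u|\le 1$), and that the constants $C_k$ remain under control as $k$ increases.
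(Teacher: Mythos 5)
Your argument is correct and implements the same basic strategy as the paper — an iterative improvement of the H\"older exponent via barrier comparison on a boundary slab — but the barrier, the base case, and the book-keeping are genuinely different. The paper uses the \emph{multiplicative} barrier $\phi_\alpha(x)=x_n^\alpha(|x'|^2-C_\alpha)$, whose Monge--Amp\`ere measure obeys the clean bound $\det D^2\phi_\alpha\ge x_n^{n\alpha-2}$; its base case $\beta=\frac{2}{n+1}$ comes from comparing $u$ directly against $C^*\phi_{2/(n+1)}$, and each inductive step (carried out via the linearized operator $U^{ij}\p_{ij}$, though the ordinary Monge--Amp\`ere comparison principle would work just as well) gains an exponent of up to $\frac{2}{n}$, so that at most $n-1$ iterations reach any $\beta<1$. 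You instead start from the $\frac{1}{n}$-H\"older bound given directly by Aleksandrov's maximum principle, and use the \emph{additive} barrier $\phi(x)=g(x_n)+\frac{\nu}{2}(|x'|^2-D^2)$, extracting the gain by a two-parameter optimization over $\nu$ and the slab height $h$; this yields a recursion whose increment $\gamma_{k+1}-\gamma_k$ is proportional to $1-\gamma_k$, so convergence to exponent $1$ is only asymptotic. Both routes close, and yours is arguably more self-contained (it never leaves the comparison principle), while the paper's finishes in a bounded number of steps.

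Two small points you should fix or tighten. First, the recursion you state, $\gamma_{k+1}=\frac{n\gamma_k+2}{n+2}$, does not match what your own optimization produces: carrying it through (optimal $\nu\sim(h^{n\gamma_k+1}d_0)^{1/n}$, then optimal $h\sim d_0^{(n-1)/(n+1)}$) gives $\gamma_{k+1}=\frac{(n-1)\gamma_k+2}{n+1}$, with increment $\frac{2(1-\gamma_k)}{n+1}$. This is immaterial — both recursions increase strictly to the fixed point $1$ from any $\gamma_0\in(0,1)$ — but it is worth getting right. Second, the remark at the end about keeping the $C_k$ under control is a non-issue that you should dispose of explicitly: since $\beta<1$ is fixed at the outset, you only ever run the iteration a finite number of times (until $\gamma_k\ge\beta$), so only finitely many constants appear; there is no uniformity-in-$k$ problem to address.
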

 \begin{proof}
 By the convexity of $u$, the global regularity $u\in C^{0,\beta}(\overline{\Omega})$ for all $\beta\in (0, 1)$ follows from the boundary estimate (\ref{ualmLip}). It remains 
 to establish this estimate. Without loss of generality, we can assume that
 $\|u\|_{L^{\infty}(\Omega)}=1.$
 
Let $x^{\ast}$ be an arbitrary point in $\Omega$. By translation and rotation of coordinates, we can assume that we have the following geometric setting: the origin $0$ of $\R^n$ lies on $\p\Omega$,  
 the $x_n$-axis points inward $\Omega$, $x^{\ast}$ lies on the $x_n$-axis, and the minimum distance to the boundary of $\Omega$ from $x^{\ast}$ is achieved at the origin. We denote a point $x= (x_1, \cdots, x_n)\in\R^n$
 by $(x', x_n)$ where $x'=(x_1, \cdots, x_{n-1})$. 
 
 To prove (\ref{ualmLip}), it suffices to prove that for all $x\in\Omega$, we have
 \begin{equation}
 \label{betapoly2}
 |u(x)|\leq C(n,\alpha, \diam \Omega) x_n^{\alpha} ~\text{for all}~\alpha\in (0, 1). 
 \end{equation}
 First, we 
prove a weaker bound
\begin{equation} |u(x)| \leq C(n, \diam \Omega) x_n^{\frac{2}{n+1}}~\text{for all}~x\in\Omega.
\label{ustep1}
\end{equation}
Indeed,  motivated by \cite[Lemma 1]{C1},  let us consider, for $\alpha\in (0, 1)$ and $x\in\Omega$,
\begin{equation}\phi_{\alpha}(x)= x_n^{\alpha} (|x'|^2 -C_\alpha)~\text{where}~C_{\alpha}= \frac{1 +2 [\diam\Omega]^2}{\alpha (1-\alpha)}.
\label{alphaeq}
\end{equation}
Then, denoting the second partial derivative operator  $\frac{\p^2}{\p x_i \p x_j}$ by $D_{ij}$, we have
$$D_{ij}\phi_{\alpha} = 2x_n^{\alpha}\delta_{ij}~\text{for}~i, j\leq n-1;~ D_{in}\phi_{\alpha}= 2\alpha x_i x_n^{\alpha-1}~ \text{and}~ D_{nn} \phi_{\alpha} = \alpha (\alpha-1) (|x'|^2-C_\alpha)x_n^{\alpha-2}.$$
Here, $\delta_{ij}=1$ if $i=j$ and $\delta_{ij}=0$ if $i\neq j$.
We can compute for all $x\in\Omega$,
$$\det D^2 \phi_{\alpha} (x)= 2^{n-1}x_n^{n\alpha-2} [\alpha(1-\alpha) C_\alpha- (\alpha^2+ \alpha) |x'|^2]\geq 2^{n-1}x_n^{n\alpha-2}.$$
Therefore, $\phi_{\alpha}$ is convex in $\Omega$ with
\begin{equation}\det D^2\phi_{\alpha}(x)\geq x_n^{n\alpha-2}~\text{in}~\Omega~ \text{and} ~\phi_{\alpha}\leq 0~\text{on}~\p\Omega.
\label{detphi}
\end{equation}
To prove (\ref{ustep1}), we choose $\alpha =\frac{2}{n+1}$ in (\ref{alphaeq}) and then use (\ref{detphi}) to obtain a convex function $\phi=\phi_{\frac{2}{n+1}}\in C(\overline{\Omega}) $ satisfying the Monge-Amp\`ere inequality
$$\det D^{2} \phi \geq [\diam\Omega]^{-\frac{2}{n+1}}~ \text{in}~ \Omega$$ while $\phi\leq 0$ on $\p\Omega$.
Hence, with $$C^*=(\lambda[\Omega]  [\diam\Omega]^{\frac{2}{n+1}})^{1/n}, $$ we have  $$\det D^2(C^*\phi)= [C^*]^n\det D^2\phi\geq \lambda[\Omega]\geq \lambda[\Omega] |u|^n
=\det D^2 u~ \text{in} ~\Omega,$$ while on $\p\Omega$, $u=0\geq C^*\phi$.
By the comparison principle, Lemma \ref{comp_prin}, we have
$u\geq C^*\phi_{\frac{2}{n+1}}$ in $\Omega.$
Therefore
 $$|u|\leq -C^*\phi_{\frac{2}{n+1}}~\text{in}~\Omega.$$
In particular, from the choice of $C_{\frac{2}{n+1}}$ in (\ref{alphaeq}), the bounds on $\lambda$ given by Corollary \ref{eigen_est}, we find that (\ref{ustep1}) now follows  the following
estimates
$$|u(x)|=|u(x', x_n)| \leq -C^*\phi_{\frac{2}{n+1}}(x', x_n) \leq C^*C_{\frac{2}{n+1}} x_n^{\frac{2}{n+1}}~\text{for all}~ x\in\Omega.$$

Next, we improve the estimate (\ref{ustep1}) by iteration. The key argument is the following.\\
{\bf Claim.} If for some $\beta\in (0, 1)$ we have
\begin{equation} |u(x)|\leq C(n,\beta, \diam \Omega) x_n^{\beta}~\text{for all}~x\in\Omega,
\label{ustep2}
\end{equation}
then for all $x\in\Omega$,
\begin{equation}|u(x)|\leq C(n,\alpha, \diam \Omega) x_n^{\alpha}~\text{for any}~
\beta<\alpha<  \min\{\beta+ \frac{2}{n}, 1\}.
\label{ustep3}
\end{equation}

From this claim, we can increase the exponent of $x_n$ in the upper bound for $|u|$ by at least $\frac{1}{n}$ if it is less than $1-\frac{1}{n}$. 
Thus, from (\ref{ustep1}), we obtain the desired bound (\ref{betapoly2}) after at most $(n-1)$ iterations. 

It remains to prove the claim.
Suppose we have (\ref{ustep2}) for $\beta<1$. If $\beta<\alpha<\min\{\beta +\frac{2}{n}, 1\} $, then
for $\hat C= \hat C(n, \beta, \alpha,\diam \Omega)$ large, we have 
\begin{equation}|u(x)|\leq C(n,\beta,\diam\Omega) x_n^{\beta} < \hat C\lambda^{-\frac{1}{n}} x_n^{\frac{n\alpha-2}{n}}~\text{in}~\Omega.
\label{Clarge}
\end{equation}

Denote by $(U^{ij})=(\det D^2 u) (D^2 u)^{-1}$ the cofactor matrix of the Hessian matrix $D^2 u = (u_{ij})$. Then $$\det U=(\det D^2 u)^{n-1} ~\text{and}~U^{ij} u_{ij}=n \det D^2 u= n\lambda |u|^n.$$ Using (\ref{detphi}), (\ref{Clarge}) and the matrix inequality
$$\trace (AB)\geq n (\det A)^{1/n} (\det B)^{1/n}~\text{for~} A, B~\text{symmetric}~\geq 0,$$ we find that 
\begin{eqnarray}U^{ij} (\hat C\phi_{\alpha})_{ij} \geq n\hat C (\det D^2 u)^{\frac{n-1}{n}} (\det D^2\phi_{\alpha})^{\frac{1}{n}}
 &\geq& n\hat C \lambda^{\frac{n-1}{n}}|u|^{n-1} x_n^{\frac{n\alpha-2}{n}}\nonumber \\&>& n\lambda |u|^n = n\det D^2 u=U^{ij} u_{ij}~\text{in}~\Omega.
 \label{incr_beta}
\end{eqnarray}
Now, the maximum principle for the operator $U^{ij}\p_{ij}$ applied to $u$ and $\hat C\phi_{\alpha}$ gives
$$u\geq \hat C\phi_{\alpha}~\text{in}~\Omega. $$
It follows that $$|u(x)|=|u|(x', x_n)\leq -\hat C\phi_{\alpha}(x', x_n) \leq \hat C C_{\alpha} x_n^{\alpha}~ \text{for all}~ x\in\Omega.$$
This gives (\ref{ustep3}) and the claim is proved. The proof of the proposition is complete.
\end{proof}
Inspecting (\ref{incr_beta}), we find that it also holds if we replace $u$ by a solution $v$ to (\ref{DMA}) where $n-2\leq p\neq n$ and $\alpha$ is required 
to satisfy 
$\beta<\alpha<\min\{\frac{\beta p + 2}{n},1
\}$.  This combined with the upper bound for $|v|$ in Lemma \ref{Alem}(iii) gives the almost Lipschitz property of $v$. We state this as a
proposition.
\begin{prop}
\label{pn2_prop}
Let $\Omega$ be an open bounded convex domain in $\R^n$ and $n-2\leq p\neq n$.
 Let $v\in C(\overline{\Omega})$ be a nonzero convex solution of 
 \begin{equation*}
\det D^{2} v=|v|^p~\text{in} ~\Omega,~\text{and}~
v =0~\text{on}~\p \Omega.
\end{equation*}
Then, 
 for all $\beta\in (0, 1)$, we have $v\in C^{0,\beta}(\overline{\Omega})$ and the estimate
 \begin{equation*}
 |v(x)|\leq C(n, p, \beta, \emph{diam} \Omega) [\emph{dist}(x,\p\Omega) ]^{\beta}~\text{for all}~x\in\Omega.
 \end{equation*}
\end{prop}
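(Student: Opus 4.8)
The plan is to transcribe the proof of Proposition~\ref{alm_Lip}, with the eigenfunction equation $\det D^{2}u=\lambda|u|^{n}$ replaced by $\det D^{2}v=|v|^{p}$ and the exponent bookkeeping adjusted. First I would record that $\|v\|_{L^{\infty}(\Omega)}$ is finite and bounded above and below by Lemma~\ref{Alem}(iii) (applied with constant $1$), and that $v$ is strictly convex with $v\in C^{\infty}(\Omega)$ by Proposition~\ref{reg_prop}. As in the proof of Proposition~\ref{alm_Lip} it suffices to bound $|v|$ at an arbitrary point $x^{*}$; after a translation and rotation we may assume $0\in\p\Omega$ is the boundary point nearest $x^{*}$, the $x_{n}$-axis points into $\Omega$, and we write $x=(x',x_{n})$.

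For the initial estimate I would use the barrier $\phi_{2/(n+1)}$ from (\ref{alphaeq}): by (\ref{detphi}) it is convex, $\le 0$ on $\p\Omega$, and satisfies $\det D^{2}\phi_{2/(n+1)}\ge x_{n}^{-2/(n+1)}\ge[\diam\Omega]^{-2/(n+1)}$ in $\Omega$. Choosing $C^{*}$ so that $\det D^{2}(C^{*}\phi_{2/(n+1)})\ge\|v\|_{L^{\infty}(\Omega)}^{p}\ge|v|^{p}=\det D^{2}v$ and invoking the comparison principle (Lemma~\ref{comp_prin}) gives $v\ge C^{*}\phi_{2/(n+1)}$, hence the weak bound $|v(x)|\le C\, x_{n}^{2/(n+1)}$ in $\Omega$.

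The heart of the proof is the iteration, exactly the computation yielding (\ref{incr_beta}) but with $|v|^{n}$ replaced by $|v|^{p}$: if $|v(x)|\le C x_{n}^{\beta}$ in $\Omega$ for some $\beta\in(0,1)$, then for every $\alpha$ with $\beta<\alpha<\min\{\tfrac{\beta p+2}{n},1\}$ and $\hat C=\hat C(n,p,\beta,\alpha,\diam\Omega,C)$ large one has, with $(U^{ij})=(\det D^{2}v)(D^{2}v)^{-1}$,
$$U^{ij}(\hat C\phi_{\alpha})_{ij}\ \ge\ n\hat C(\det D^{2}v)^{\frac{n-1}{n}}(\det D^{2}\phi_{\alpha})^{\frac1n}\ \ge\ n\hat C|v|^{\frac{p(n-1)}{n}}x_{n}^{\frac{n\alpha-2}{n}}\ >\ n|v|^{p}=U^{ij}v_{ij}\quad\text{in }\Omega,$$
the last inequality reducing to $\hat C x_{n}^{(n\alpha-2)/n}>|v|^{p/n}$, which holds once $\hat C\ge C^{p/n}[\diam\Omega]^{(\beta p-n\alpha+2)/n}$, the exponent $\beta p-n\alpha+2$ being positive precisely because $\alpha<\tfrac{\beta p+2}{n}$. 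The maximum principle for $U^{ij}\p_{ij}$ applied to $v$ and $\hat C\phi_{\alpha}$ (the difference $\hat C\phi_{\alpha}-v$ has no interior maximum, and $v=0\ge\hat C\phi_{\alpha}$ on $\p\Omega$) then gives $v\ge\hat C\phi_{\alpha}$, i.e. $|v(x)|\le\hat C C_{\alpha}x_{n}^{\alpha}$.

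Finally I would iterate this, starting from $\beta=\tfrac{2}{n+1}$, until the exponent exceeds a prescribed target $\beta^{*}\in(0,1)$; evaluating at $x^{*}$ and letting $x^{*}$ vary then yields the boundary estimate, and $v\in C^{0,\beta}(\overline{\Omega})$ for all $\beta\in(0,1)$ follows from convexity as in Proposition~\ref{alm_Lip}. The one point that genuinely uses the hypothesis $p\ge n-2$—and the place I would be most careful—is that the admissible interval $(\beta,\tfrac{\beta p+2}{n})$ is nonempty for every $\beta\in(0,1)$: this is equivalent to $\beta(n-p)<2$, guaranteed by $n-p\le 2$. One must also check that the increments $\alpha-\beta$ can be chosen bounded below by a positive constant depending only on $n,p,\beta^{*}$ as long as $\beta\le\beta^{*}$, so that the iteration terminates after finitely many steps; this mild bookkeeping is the only real obstacle, the rest being a direct transcription of the proof of Proposition~\ref{alm_Lip}.
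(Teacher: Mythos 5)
Your proposal is correct and follows essentially the same route as the paper, which proves this proposition precisely by rerunning the barrier-and-iteration argument of Proposition~\ref{alm_Lip} with $\lambda|u|^{n}$ replaced by $|v|^{p}$, the admissible range $\beta<\alpha<\min\{\frac{\beta p+2}{n},1\}$, and Lemma~\ref{Alem}(iii) supplying the a priori bound on $\|v\|_{L^{\infty}(\Omega)}$. Your identification of $p\ge n-2$ as exactly what keeps the interval $(\beta,\frac{\beta p+2}{n})$ nonempty for all $\beta\in(0,1)$, and your check that the increments stay bounded below so the iteration terminates, match the paper's (terser) justification.
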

It would be interesting to see if the conclusions of Proposition \ref{pn2_prop} still hold for $p\in [0, n-2)$ when $n\geq 3$ and $\Omega$ is a 
general open bounded convex domain in $\R^n$. 
On the other hand, when $\Omega$ is a bounded domain with uniformly convex smooth boundary, nonzero convex solutions to (\ref{DMA}) share the same global smoothness property as the Monge-Amp\`ere eigenfunctions exactly when $p$ is a positive integer, regardless whether it is less than $n-2$ or not. More precisely, we have the following theorem.
\begin{thm}\label{Rlem} Let $\Omega$ be an open bounded domain with uniformly convex smooth boundary. Let $p>0$ and $p\neq n$. 
 Assume that $u\in C(\overline{\Omega})$ is a nonzero convex solution of 
 \begin{equation}
 \left\{
 \begin{alignedat}{2}
   \det D^{2} u~&= |u|^p \h~&&\text{in} ~\Omega, \\\
v &=0\h~&&\text{on}~\p \Omega.
 \end{alignedat}
 \right.
 \label{pnotn}
\end{equation}
\begin{myindentpar}{1cm}
 (i) If $p$ is a positive integer then $u\in C^{\infty}(\overline{\Omega}).$\\
 (ii) If  $p$ is a not an integer then $u\in C^{2 + [p],\beta}(\overline{\Omega}) $ for some $\beta=\beta (n, p)\in (0, 1)$ but $u\not\in C^{3+[p]}(\overline{\Omega}).$ Here we use $[p]$ to denote the greatest integer not exceeding $p$.
\end{myindentpar}
\end{thm}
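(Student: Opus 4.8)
The plan is to treat the equation $\det D^2 u = |u|^p$ near the boundary as a perturbation of a degenerate Monge--Amp\`ere equation with clean right-hand side, and to bootstrap global regularity from the known boundary behaviour of $|u|$. The first step is to pin down the boundary asymptotics of $u$: by Aleksandrov's maximum principle (Theorem~\ref{Alex_thm}) together with the bounds of Lemma~\ref{Alem}(iii), $u$ vanishes at the boundary at least like $\dist(x,\p\Omega)^{1/n}$, and (arguing as in Proposition~\ref{alm_Lip}, or directly via the explicit barriers $\phi_\alpha$) in fact like $\dist(x,\p\Omega)^\beta$ for every $\beta<1$ when $p\ge n-2$. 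More importantly, one wants the \emph{two-sided} comparison: since $B_R\subset\Omega\subset B_{nR}$ locally near a boundary point, and $\p\Omega$ is uniformly convex and smooth, one constructs upper and lower barriers showing $u(x)\asymp -\dist(x,\p\Omega)$ only fails in a controlled way; more precisely, writing $d=\dist(x,\p\Omega)$, one shows $|u|\sim c\, d^{\,?}$ with the correct exponent dictated by balancing $\det D^2 u$ (which, if $u\sim d^\gamma$ in the normal direction and stays nondegenerate tangentially, behaves like $d^{\gamma-2}$) against $|u|^p\sim d^{\gamma p}$; this forces $\gamma-2=\gamma p$, i.e.\ $\gamma=\frac{2}{1-p}$ when $p<1$, while for $p\ge 1$ the solution is nondegenerate up to the boundary in the normal direction and $\gamma$ is determined differently. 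This dichotomy is exactly what produces the $C^{2+[p]}$ threshold.

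Next, having localized near a boundary point and flattened $\p\Omega$, I would change variables to turn the degenerate problem into a nondegenerate one. The standard device (as in the Caffarelli--Nirenberg--Spruck and Lions analyses, and in the author--Savin paper \cite{LS} cited for the case $p=n$) is a partial Legendre transform or a hodograph-type substitution in the degenerate normal variable, which converts $\det D^2 u = |u|^p$ near $\p\Omega$ into a \emph{uniformly elliptic} equation on a half-space for the transformed function, with coefficients that are smooth functions of the variables and of a fractional power of one coordinate. One then reads off the regularity of $u$ from Schauder theory applied to this transformed equation: the right-hand side carries a factor like $t^{p}$ (or $t^{-2+\cdots}$), whose Hölder regularity up to $\{t=0\}$ is $C^{[p],\{p\}}$ when $p\notin\mathbb{Z}$, giving $u\in C^{2+[p],\beta}(\overline\Omega)$, and $C^\infty$ when $p\in\mathbb{Z}$ because then the factor is a genuine polynomial/smooth function. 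Interior smoothness $u\in C^\infty(\Omega)$ is already known from Proposition~\ref{reg_prop}, so all the work is at the boundary.

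The optimality claim in (ii)---that $u\notin C^{3+[p]}(\overline\Omega)$ when $p\notin\mathbb{Z}$---I would prove by exhibiting the precise leading-order boundary expansion of $u$ in powers of $d=\dist(x,\p\Omega)$ and showing the first non-smooth term is $c\, d^{\,2+p}$ with $c\neq 0$; since $2+p$ is not an integer, differentiating $3+[p]$ times in the normal direction blows up. This requires computing the asymptotic expansion from the transformed elliptic equation (the inhomogeneous term $t^p$ generates a particular solution $\sim t^{2+p}$ that cannot be cancelled), plus a nondegeneracy argument that the coefficient is genuinely nonzero, which follows because $|u|>0$ in $\Omega$ forces the leading boundary coefficient of $|u|$ itself to be strictly positive.

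I expect the \textbf{main obstacle} to be setting up the boundary change of variables cleanly enough that Schauder estimates apply with the sharp Hölder exponent, and in particular controlling the \emph{tangential} nondegeneracy of $D^2 u$ near $\p\Omega$: one must know a priori that $u$ is strictly convex and that the Hessian does not degenerate in tangential directions at the boundary (only in the normal direction, in the subcritical range $p<1$), which is where the uniform convexity and smoothness of $\p\Omega$ are essential and where the argument genuinely uses more than the Aleksandrov-type estimates available for general convex domains. Once that tangential nondegeneracy is in hand, the regularity and its optimality follow from standard (if technically involved) elliptic theory applied to the transformed equation.
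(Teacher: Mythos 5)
Your overall outline---partial Legendre transform in the degenerate normal variable, then Schauder estimates on the transformed equation with a $y_n^p$ factor, with interior smoothness already known from Proposition~\ref{reg_prop}---is the right strategy and is exactly what the paper does. However, there is a genuine error in your analysis of the boundary asymptotics that would derail the argument before you get to the transform. You posit that $u\sim -c\,d^\gamma$ with $\gamma$ determined by balancing $\det D^2 u \sim d^{\gamma-2}$ against $|u|^p\sim d^{\gamma p}$, giving $\gamma=\tfrac{2}{1-p}$ for $p<1$ and a ``dichotomy'' at $p=1$. This is wrong: for every $p>0$ the solution vanishes exactly linearly, $|u(x)|\asymp\dist(x,\p\Omega)$. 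The lower bound $|u|\geq c\,\dist(\cdot,\p\Omega)$ follows immediately from convexity together with $u=0$ on $\p\Omega$ and $u<0$ inside, and already rules out $\gamma>1$. The upper bound follows from a barrier (the paper uses $\tilde u=\mu(e^\rho-1)$ with $\rho$ a defining function of $\Omega$, where uniform convexity of $\p\Omega$ is what makes $\det D^2\tilde u$ bounded away from zero). Your heuristic fails because the leading term of $u$ near $\p\Omega$ is linear while the fractional behavior enters only in the \emph{correction} (roughly $u\approx -c\,d + C\,d^{2+p}$), so $u_{nn}\sim d^p$, not $d^{\gamma-2}$; you cannot deduce the exponent from a pure-monomial ansatz. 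The $C^{2+[p]}$ threshold has nothing to do with a $p<1$ vs.\ $p\geq 1$ split; it comes from the H\"older regularity of $d^p$ (equivalently, of the factor $y_n^p$ in the transformed equation), which is $C^{[p],p-[p]}$ across the boundary for all non-integer $p>0$.

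Two further comments. First, the two-sided bound $|u|\asymp d$ plus smoothness and uniform convexity of $\p\Omega$ give quadratic separation of $u$ from its boundary tangent planes, which is what the paper feeds into Savin's boundary localization theorem to get global $C^2$, and then into \cite[Theorem~1.2]{LS} to get $C^{2,\beta}$; only at that stage is the partial Legendre transform applied and iterated Schauder (tangentially, then in $y_n$) used to reach $C^\infty$ or $C^{2+[p],\beta}$. Second, your route to the optimality claim in (ii)---computing the full boundary expansion and checking the coefficient of $d^{2+p}$ is nonzero---would work but is much heavier than necessary. The paper argues by contradiction in one line: if $u\in C^{3+[p]}(\overline\Omega)$ then $|u|=g\,d$ with $g>0$ in $C^{2+[p]}$ near $\p\Omega$, so $\det D^2 u\in C^{1+[p]}$, while $|u|^p=g^p\,d^p$ has boundary regularity only $C^{[p],\,p-[p]}$, a contradiction. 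You should adopt that short argument rather than building the expansion.
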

In terms on the number of whole derivatives, the global regularity result in Theorem \ref{Rlem} is optimal. The proof of Theorem \ref{Rlem}, which is a revisit of 
the proof of the global smoothness of the Monge-Amp\`ere
 eigenfunctions in \cite[Theorem 1.4]{LS}, will be postponed to Section \ref{gl_sec}.

\subsection{Uniqueness of the Monge-Amp\`ere eigenvalue and eigenfunctions}
In this subsection, we prove the uniqueness property of the Monge-Amp\`ere eigenvalue and eigenfunctions (up to positive multiplicative constants)
to the eigenvalue problem (\ref{EVP_eq}).
\begin{prop}
\label{uni_lam}
Let $\Omega$ be an open bounded convex domain in $\R^n$.
 Suppose that $u, v\in C(\overline{\Omega})$ are nonzero convex functions in $\Omega$ with $u=v=0$ on $\p\Omega$. Suppose that they satisfy the equations
 \begin{equation*}
 \left\{
 \begin{alignedat}{2}
   \det D^{2} u~&=\lambda |u|^n \h~&&\text{in} ~\Omega, \\\
u &=0\h~&&\text{on}~\p \Omega,
 \end{alignedat}
 \right.
\end{equation*}
and 
\begin{equation*}
 \left\{
 \begin{alignedat}{2}
   \det D^{2} v~&=\Lambda |v|^n \h~&&\text{in} ~\Omega, \\\
v &=0\h~&&\text{on}~\p \Omega
 \end{alignedat}
 \right.
\end{equation*}
respectively, for some positive constants $\lambda$ and $\Lambda$. Then $\lambda=\Lambda$ and $v=m u$ for some positive constant $m$.
\end{prop}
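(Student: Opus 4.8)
The plan is to split the argument into two steps: first show $\lambda = \Lambda$, then show the eigenfunctions coincide up to a positive scalar. For \emph{Step 1}, I would normalize so that $\|u\|_{L^\infty(\Omega)} = \|v\|_{L^\infty(\Omega)} = 1$. By Proposition \ref{reg_prop}, both $u$ and $v$ are strictly convex and $C^\infty$ in $\Omega$, so Proposition \ref{detC} (nonlinear integration by parts) applies, provided the finiteness hypotheses (\ref{detuv}) hold. The integrability $\int_\Omega \det D^2 v\,dx = \Lambda \int_\Omega |v|^n\,dx \le \Lambda|\Omega| < \infty$ is immediate; the mixed bound $\int_\Omega (\det D^2 u)^{1/n}(\det D^2 v)^{(n-1)/n}\,dx = \lambda^{1/n}\Lambda^{(n-1)/n}\int_\Omega |u|\,|v|^{n-1}\,dx < \infty$ follows from the sup bounds. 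Now apply (\ref{term0}) \emph{twice}, once in each order:
\begin{align*}
\int_\Omega |u|\det D^2 v\,dx &\ge \int_\Omega |v|(\det D^2 u)^{1/n}(\det D^2 v)^{(n-1)/n}\,dx,\\
\int_\Omega |v|\det D^2 u\,dx &\ge \int_\Omega |u|(\det D^2 v)^{1/n}(\det D^2 u)^{(n-1)/n}\,dx.
\end{align*}
Substituting $\det D^2 u = \lambda|u|^n$ and $\det D^2 v = \Lambda|v|^n$, the left sides become $\Lambda\int |u||v|^n$ and $\lambda\int |v||u|^n$ respectively, while both right sides equal $\lambda^{1/n}\Lambda^{(n-1)/n}\int |u|^? |v|^?$ with the exponents working out so that the two inequalities read $\Lambda \int|u||v|^n \ge \lambda^{1/n}\Lambda^{(n-1)/n}\int |u||v|^n$ and $\lambda\int|u|^n|v| \ge \Lambda^{1/n}\lambda^{(n-1)/n}\int|u|^n|v|$. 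Dividing by the (positive, finite) integrals gives $\Lambda \ge \lambda^{1/n}\Lambda^{(n-1)/n}$ and $\lambda \ge \Lambda^{1/n}\lambda^{(n-1)/n}$, i.e. $\Lambda^{1/n} \ge \lambda^{1/n}$ and $\lambda^{1/n}\ge \Lambda^{1/n}$, hence $\lambda = \Lambda$.

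For \emph{Step 2}, with $\lambda = \Lambda$ now fixed, I would trace through the equality case in the double application above. Since $\lambda = \Lambda$, both inequalities from Step 1 are in fact equalities, which forces equality in (\ref{term0}) for the pair $(u,v)$ and for $(v,u)$. Chasing back through the proof of Proposition \ref{detC} — ultimately through the matrix inequality $\operatorname{trace}(AB) \ge n(\det A)^{1/n}(\det B)^{1/n}$ used in Lemma \ref{detdet} — equality there holds (for positive definite $A,B$) exactly when $A$ is a scalar multiple of $B^{-1}$, i.e. when $D^2 u$ and $D^2 v$ are proportional at a.e. point, with the proportionality factor forced by the determinant relation $\det D^2 u = \det D^2 v \cdot (|u|/|v|)^n$ to be $|u|/|v|$ pointwise: $D^2 v = (|v|/|u|)\, D^2 u$. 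The cleaner route to finish, rather than integrating this ODE-like relation, is to use the strict-convexity rescaling argument exactly as in the proof of Proposition \ref{sub_prop}: set $m = \|v\|_{L^\infty}/\|u\|_{L^\infty}$ after un-normalizing, and show $v = mu$ by a maximum-principle comparison of $v/u_\Lambda$ where $u_\Lambda(x) = mu(x/\Lambda)$; the point is that with $\lambda = \Lambda$ and the \emph{same} power $n$ on the right side, the scaling $\det D^2(mu(x/\Lambda)) = m^n \Lambda^{-2n}\det D^2 u(x/\Lambda)$ combined with maximality of the ratio produces $1 \ge \eta_\Lambda^n \Lambda^{-2n}\,(\eta_\Lambda)^{-n} = \Lambda^{-2n}$, which as $\Lambda \searrow 1$ gives no contradiction but forces $\eta_\Lambda \le \Lambda^{2}$, hence $v \le m u$ in the limit, and the reverse inequality by symmetry.

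The integrability needed to invoke Proposition \ref{detC} legitimately over the \emph{whole} domain (not just on subdomains) is where Proposition \ref{alm_Lip} and Lemma \ref{inte_lem0} enter: the almost-Lipschitz bound $|u(x)|, |v(x)| \le C[\operatorname{dist}(x,\partial\Omega)]^\beta$ guarantees $|u|\,|v|^{n-1} \in L^1(\Omega)$ with room to spare, and more delicately, that the boundary terms dropped in the integration-by-parts (\ref{del_ineq}) genuinely vanish in the limit $\Omega_m \nearrow \Omega$ — this is the content of Lemma \ref{inte_lem0}, controlling $\int_\Omega \Delta u\, |v|^{n-1}\,dx$. I expect the main obstacle to be precisely this: establishing the equality case of Proposition \ref{detC} rigorously (rather than formally), since the cited result is stated only as an inequality and its proof passes through an approximation $\Omega_m \to \Omega$ where equality need not be preserved under limits without a quantitative handle. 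The resolution is that the equality $\lambda = \Lambda$ already obtained lets us bypass a direct equality analysis of (\ref{detC}) and instead deploy the robust rescaling/maximum-principle argument of Proposition \ref{sub_prop}, which only needs interior smoothness (Proposition \ref{reg_prop}) and the boundary decay (Proposition \ref{alm_Lip}) to make sense of the ratio $v/u_\Lambda$ up to $\partial\Omega$.
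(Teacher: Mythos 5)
Your \emph{Step 1} is correct and is exactly the paper's argument: apply Proposition \ref{detC} to the pair $(u,v)$ and then to $(v,u)$, substitute the equations, and cancel the common integral to get $\Lambda\ge\lambda$ and $\lambda\ge\Lambda$. (The finiteness hypotheses (\ref{detuv}) are checked just as you indicate.)

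Your \emph{Step 2}, however, has a genuine gap. The rescaling/maximum-principle argument of Proposition \ref{sub_prop} is vacuous precisely when $p=n$. At the interior maximum point $x_\Lambda$ of $\eta_\Lambda=v/u_\Lambda$, that argument yields $1\ge \eta_\Lambda^{\,n-p}(x_\Lambda)\,\Lambda^{-2n}$; for $p=n$ the factor $\eta_\Lambda^{\,n-p}$ is identically $1$, so the inequality reads $1\ge\Lambda^{-2n}$, which is true for every $\Lambda>1$ and contains no information whatsoever about $\eta_\Lambda$. Your claim that it ``forces $\eta_\Lambda\le\Lambda^2$'' is a non sequitur --- the $\eta_\Lambda$ factors cancel exactly. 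This is not a repairable technicality: for $p=n$ the solution set is the ray $\{mu:m>0\}$, so no comparison argument that would prove two normalized solutions coincide by contradiction at a touching point can work without using the eigenvalue structure in an essentially different way. Your first route (tracing the equality case of Proposition \ref{detC} applied to $(u,v)$) is, as you yourself note, not rigorous, since that proposition is proved by approximation and only as an inequality; and even granting the pointwise proportionality $D^2u=(u/v)D^2v$, you still have to integrate that relation, which you explicitly decline to do.

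What the paper actually does in Step 2 is different on both counts. First, it applies Proposition \ref{detC} not to $(u,v)$ but to the pair $(u+v,\,v)$, and combines it with the superadditivity $[\det(A+B)]^{1/n}\ge(\det A)^{1/n}+(\det B)^{1/n}$: the resulting chain of inequalities collapses to an identity, which forces pointwise equality in the Minkowski-type determinant inequality (not in Proposition \ref{detC} itself), hence $D^2u(x)=\frac{u(x)}{v(x)}D^2v(x)$ in $\Omega$. The admissibility of the pair $(u+v,v)$ in Proposition \ref{detC} is exactly where Lemma \ref{inte_lem} and the almost-Lipschitz estimate of Proposition \ref{alm_Lip} are needed, via $n(\det D^2(u+v))^{1/n}\le\Delta(u+v)$ and $\int_\Omega\Delta u\,|v|^{n-1}\,dx\le C$. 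Second, the proportionality of Hessians is then integrated along each ray through an interior point: writing $f(t)=u(x_0+t\eb)$, $\tilde f(t)=v(x_0+t\eb)$, one gets $f''\tilde f=f\tilde f''$, so $f'\tilde f-f\tilde f'$ is constant, and the boundary decay $|Du|\,|v|\le C\,\dist^{\frac{n-1}{n+1}}(\cdot,\p\Omega)$ from (\ref{Dest10}) shows this constant is zero, whence $u/v$ is constant on each ray and then constant on $\Omega$. You will need to supply both of these ingredients to close Step 2.
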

\begin{proof} 
By Proposition \ref{reg_prop}, we have $u, v\in C^{\infty}(\Omega)$.\\
{\it Step 1 (Uniqueness of the eigenvalue: $\lambda=\Lambda$).} We apply Proposition \ref{detC} to $u$ and $v$ to obtain
 $$\int_{\Omega}  \Lambda |u||v|^n~dx=\int_{\Omega} |u|\det D^2 v~dx\geq \int_{\Omega}(\det D^2 u)^{\frac{1}{n}}(\det D^2 v)^{\frac{n-1}{n}}|v|~dx= 
 \int_{\Omega} \lambda^{\frac{1}{n}}\Lambda ^{\frac{n-1}{n}}|u||v|^n~dx.$$
 Since $|u||v|^n>0$ in $\Omega$, it follows that $\Lambda \geq \lambda.$ Similarly, applying Proposition \ref{detC} to $v$ and $u$, we obtain $\lambda\geq\Lambda$. Therefore, $\lambda=\Lambda$.\\
 {\it Step 2 (Uniqueness of the eigenfunctions up to positive multiplicative constants: $v=m u$).} In this step, we can use $$\Lambda=\lambda=\lambda[\Omega]$$ as established in {\it Step 1}.
 We can normalize both $u$ and $v$ by multiplying suitable constants so that 
$$\|u\|_{L^{\infty}(\Omega)}= \|v\|_{L^{\infty}(\Omega)}= 1. $$
We need to show that $u=v$. 
Crucial to our argument is the following integrability lemma which is the normalized version of Lemma \ref{inte_lem0} stated in the Introduction.
\begin{lem}
\label{inte_lem}
Let $\Omega$ be an open bounded convex domain in $\R^n$.
Suppose that $u, v\in C(\overline{\Omega})\cap C^{\infty}(\Omega)$ are nonzero convex eigenfunctions to (\ref{EVP_eq}) with
$\|u\|_{L^{\infty}(\Omega)}= \|v\|_{L^{\infty}(\Omega)}= 1. $
Then, there exists a constant $C=C(n,\Omega)$ depending only $n$ and $\Omega$ so that 
\begin{equation}\int_{\Omega} \Delta u |v|^{n-1}~dx\leq C
\label{Dest20}
\end{equation}
and
\begin{equation}|Du(x)| |v(x)|\leq C(n,\emph{diam} \Omega)  \emph{dist}^{\frac{n-1}{n+1}}(x,\p\Omega).
\label{Dest10}
\end{equation}

\end{lem}
We will prove Lemma \ref{inte_lem} at the end of this section and now continue with our proof of {\it Step 2}.
Because $u+ v$ is smooth and convex in $\Omega$, by the Arithmetic-Geometric inequality, we have
$$n(\det D^2 (u+ v))^{\frac{1}{n}}\leq \Delta (u+ v).$$
From Lemma \ref{inte_lem} and Corollary \ref{eigen_est}, we find that
\begin{equation}\int_{\Omega}(\det D^2 (u+ v))^{\frac{1}{n}} (\det D^2 v)^{\frac{n-1}{n}} ~dx\leq \int_{\Omega}\lambda^{\frac{n-1}{n}}\Delta (u+ v)|v|^{n-1}~dx\leq C(n,\Omega).
\label{inte_check}
\end{equation}
On the other hand, using the matrix inequality
$$[\det (A+ B)]^{\frac{1}{n}}\geq (\det A)^{\frac{1}{n}} + (\det B)^{\frac{1}{n}} ~\text{for}~ A, B~\text{symmetric, positive definite}$$
with equality if and only if $A= cB$ for some positive constant $c$, we find that for all $x\in\Omega$
\begin{equation}
\label{lam_uv}
(\det D^2 (u+ v)(x))^{\frac{1}{n}} \geq (\det D^2 u(x))^{\frac{1}{n}}  + (\det D^2  v(x))^{\frac{1}{n}} =  \lambda^{\frac{1}{n}} |u(x) + v(x)|
\end{equation}
with equality if and only if $D^2 u(x)= C(x)D^2 v(x)$ for some positive constant $C(x)$.

Due to (\ref{inte_check}), and the inequality $$\int_{\Omega}\det D^2 v~dx =\int_{\Omega}\lambda |v|^n~dx\leq \lambda|\Omega|\leq C(n,\Omega),$$ we can apply Proposition \ref{detC} to $u+ v$ and $v$ and use (\ref{lam_uv}) to obtain
 \begin{eqnarray*}\int_{\Omega} |u+ v| \lambda |v|^n~dx= \int_{\Omega}|u+v|\det D^2 v ~dx&\geq&\int_{\Omega}(\det D^2 (u+ v))^{\frac{1}{n}} (\det D^2 v)^{\frac{n-1}{n}} |v|~dx\\ &\geq& \int_{\Omega} \lambda^{\frac{1}{n}}|u+ v| \lambda ^{\frac{n-1}{n}}|v|^n~dx.
 \end{eqnarray*}
 It follows that we must have equality in (\ref{lam_uv}) for all $x\in\Omega$. Hence, for all $x\in\Omega$, 
 $$D^2 u(x) =C(x) D^2 v(x).$$  Taking the determinant of both sides, and using (\ref{EVP_eq}), we find $$C(x)= \frac{|u(x)|}{|v(x)|}= \frac{u(x)}{v(x)}$$ and therefore
 \begin{equation}D^2 u(x) = \frac{u(x)}{v(x)} D^2 v(x).
 \label{uvD2}
 \end{equation}
 Using (\ref{uvD2}) and (\ref{Dest10}), we conclude the proof of {\it Step 1} as follows. 
 
 Without loss of generality, we can assume that $\Omega$ contains the origin in its interior. For any direction $\eb\in S^{n-1}=\{x\in \R^n: |x|=1\}$, the ray through the origin in the $\eb$ direction intersects the boundary $\p\Omega$
 at $x_0$ and $x_1$. Assume that the segment from $x_0$ to $x_1$ is given by $x_0 + t \eb$ for $0\leq t \leq m:=|x_0 x_1|$.
 Let us consider the following non-positive single variable functions on $[0, m]$:
 $$f(t)= u(x_0 + t \eb),~\tilde f(t) = v(x_0 + t \eb).$$
 Then we have 
 $$f(0)=f(m)=\tilde f(0)=\tilde f(m)=0, f''(t) = D^2 u(x_0 + t\eb) \eb\cdot \eb, \tilde f''(t) = D^2 v(x_0 + t\eb) \eb\cdot \eb.$$
 Using (\ref{uvD2}), we find
 $$f''(t) =\frac{f(t)}{\tilde f'(t)}\tilde f''(t)$$
 for all $t\in (0, m)$. Thus, $(f'(t) \tilde f(t)-f(t)\tilde f'(t))'=0$ for all $t\in (0, m)$ and hence $$f'(t) \tilde f(t)-f(t)\tilde f'(t)= C$$ on $(0, m)$ for some constant $C$.

Using (\ref{Dest10}) and recalling $n\geq 2$, we find that when $t\rightarrow 0$, $f'(t) \tilde f(t)\rightarrow 0$ and $f(t)\tilde f'(t)\rightarrow 0.$ Thus $C=0$. Therefore 
 $(f(t)/\tilde f(t))'=0$ on $(0, m)$. It follows that $f(t) = c \tilde f(t)$ on $(0, m)$ for some constant $c>0$. 
 
 Returning to $u$ and $v$, we find that the ratio $u/v$ is a constant $c(\eb)$ in the direction $\eb\in S^{n-1}$. We vary the direction $\eb$ and use the fact that $c(\eb)=\frac{u(0)}{v(0)}$ for all $\eb\in S^{n-1}$ to conclude that $u/v$ is a positive constant $c$ in $\Omega$. Since $\|u\|_{L^{\infty}(\Omega)}= \|v\|_{L^{\infty}(\Omega)}= 1$, we have
 $c=1$. Therefore $u=v$.
\end{proof}
\begin{proof}[Proof of Lemma \ref{inte_lem}]
For $x\in\Omega$, by the convexity of $u$ and the fact that $u=0$ on $\p\Omega$, we have the gradient estimate
\begin{equation}|Du(x)|\leq \frac{|u(x)|}{\dist (x, \p\Omega)}.
\label{Du_est}
\end{equation}
Let $\{\Omega_{m}\}\subset\Omega$ be a sequence of smooth, open, bounded and uniformly convex domains that converges to $\Omega$ in the Hausdorff distance.  
To prove (\ref{Dest20}), by the monotone 
convergence theorem, it suffices to prove that for all $m$, we have
\begin{equation*}
\int_{\Omega_m} \Delta u |v|^{n-1}~dx\leq C(n, \Omega).
\end{equation*}
For each $m$, using the interior smoothness of $u$ and $v$ and integrating by parts, we have
\begin{equation}
\label{IBP}
\int_{\Omega_m} \Delta u |v|^{n-1}~dx =\int_{\Omega_m} \Delta u (-v)^{n-1}~dx=\int_{\Omega_m} (n-1)Du\cdot Dv |v|^{n-2} ~dx +\int_{\p\Omega_m} \frac{\p u}{\p \nu}|v|^{n-1}.
\end{equation}
where $\nu$ is the outer normal unit vector field on $\p\Omega_m$. 

We show each term on the right hand side of (\ref{IBP}) is uniformly bounded. Let us choose $\beta=\frac{n}{n+1}\in (0, 1)$ in Proposition \ref{alm_Lip}. 

Then,
for $x\in\Omega$, by Proposition \ref{alm_Lip} and the gradient estimate (\ref{Du_est}), we have
\begin{equation}|Du(x)| |v(x)|\leq C(n,\beta, \diam \Omega)  \dist^{2\beta-1}(x,\p\Omega)\leq  C(n,\diam \Omega) \dist^{\frac{n-1}{n+1}}(x,\p\Omega)
\label{Dest1}
\end{equation}
and
\begin{equation}|Du(x)| |Dv(x)| |v(x)|^{n-2} \leq C(n, \diam \Omega)  \dist^{n\beta-2}(x,\p\Omega)\leq  C(n, \diam \Omega)  \dist^{-\frac{2}{n+1}}(x,\p\Omega).
\label{Dest2}
\end{equation}
As a consequence of (\ref{Dest1}), we have (\ref{Dest10}) and furthermore, for all $x\in\Omega$,
$$|Du(x)| |v(x)|^{n-1} \leq |Du(x)| |v(x)| \leq C(n,\diam\Omega).$$
Hence,
we can easily see that for all $m$,
$$\int_{\p\Omega_m} \frac{\p u}{\p \nu}|v|^{n-1} \leq C(n, \diam \Omega).$$
It remains to show the uniform boundedness of the first term on the right hand side of (\ref{IBP}). However, this is easy, since by (\ref{Dest2}), and the fact that $\Omega_m\subset\Omega$, we have
$$\int_{\Omega_m} Du\cdot Dv |v|^{n-2} ~dx\leq \int_{\Omega} |Du| |Dv| |v|^{n-2}~dx \leq \int_{\Omega} C(n, \diam \Omega)  \dist^{-\frac{2}{n+1}}(x,\p\Omega)~dx \leq C(n,\Omega).$$
\end{proof}

\subsection{Stability of the Monge-Amp\`ere eigenvalue} In this subsection, we prove further properties of the Monge-Amp\`ere eigenvalue defined by (\ref{lam_def}). These include its stability with respect to the Hausdorff convergence of the domains.
\begin{prop} 
\label{lam_nice}
The Monge-Amp\`ere eigenvalue, as defined by (\ref{lam_def}), has the following properties:
\begin{myindentpar}{1cm}
(i) \emph{(Stability with respect to the Hausdorff convergence)} Let $\{\Omega_m\}_{m=1}^{\infty}\subset\R^n$ be a sequence of open bounded convex domains that 
converges to an open bounded convex domain $\Omega$. Then $\lim_{m\rightarrow \infty}\lambda[\Omega_m]=\lambda[\Omega]$.\\
(ii) \emph{(Domain monotonicity)} If $\Omega_1$ and $\Omega_2$ are open bounded convex domains such that $\Omega_1\subset\Omega_2$ then $\lambda[\Omega_2]\leq \lambda[\Omega_1].$\\
 (iii) \emph{(Affine transformation)} Let $\Omega$ be an open bounded convex domain in $\R^n$. If $T:\R^n\rightarrow \R^n$ is an affine transformation on $\R^n$
 then $\lambda[T(\Omega)]=|\det T|^{-2}\lambda [\Omega]$. In particular, 
 \begin{myindentpar}{1.2cm}
 $\bullet$ \emph{(Homogeneity with respect to domain dilations)} If $t>0$, then $\lambda [t\Omega]= t^{-2n} \lambda [\Omega].$\\
 $\bullet$ \emph{(Affine invariance)} If $T:\R^n\rightarrow \R^n$ is an affine transformation on $\R^n$
 with $\det T=1$, then $\lambda[T(\Omega)]=\lambda [\Omega]$.
 \end{myindentpar}
 \end{myindentpar}
\end{prop}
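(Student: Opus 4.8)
The plan is to prove (iii) first (a direct change of variables), then (i) (a compactness argument combined with the uniqueness of the eigenvalue), and finally (ii) (a convex‑envelope argument, reduced via (iii) to the case $\ov{\Omega_1}\subset\Omega_2$).

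\emph{Part (iii) (affine transformations).} Write $T(x)=Ax+b$ with $\det A=\det T\ne 0$ and associate to each competitor $u$ on $\Omega$ the function $\tilde u(y):=u(T^{-1}y)$ on $T(\Omega)$; the map $u\mapsto\tilde u$ is a bijection of the two admissible classes. Comparing supporting hyperplanes one gets $p\in\p\tilde u(y)\iff A^{T}p\in\p u(T^{-1}y)$, hence $\p\tilde u(T(E))=(A^{T})^{-1}\p u(E)$ and $M\tilde u(T(E))=|\det A|^{-1}Mu(E)$ for Borel $E\subset\Omega$. Changing variables $y=Tx$ then gives $\int_{T(\Omega)}(-\tilde u)^{n+1}\,dy=|\det A|\int_{\Omega}(-u)^{n+1}\,dx$ and $\int_{T(\Omega)}(-\tilde u)\det D^{2}\tilde u\,dy=|\det A|^{-1}\int_{\Omega}(-u)\det D^{2}u\,dx$, so the Rayleigh quotient of $\tilde u$ on $T(\Omega)$ is exactly $|\det A|^{-2}$ times that of $u$ on $\Omega$; taking infima over the admissible classes yields $\lambda[T(\Omega)]=|\det T|^{-2}\lambda[\Omega]$. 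The two displayed special cases are $A=tI$ and $|\det T|=1$, and translation invariance is built into every quantity appearing.

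\emph{Part (i) (stability).} Convex bodies converging in the Hausdorff distance have converging volumes and diameters, so $|\Omega_m|\to|\Omega|\in(0,\infty)$ and, by Corollary \ref{eigen_est}, $\{\lambda[\Omega_m]\}$ is bounded above and away from $0$; it therefore suffices to show that every subsequence has a further subsequence converging to $\lambda[\Omega]$. Pass to such a subsequence with $\lambda[\Omega_m]\to\Lambda>0$ and let $u_m$ be a convex eigenfunction on $\Omega_m$ normalized by $\|u_m\|_{L^{\infty}(\Omega_m)}=1$ (Proposition \ref{exi_prop}). Since $\int_{\Omega_m}\det D^2 u_m\,dx=\lambda[\Omega_m]\int_{\Omega_m}|u_m|^n\,dx\le C(n)|\Omega_m|^{-1}$ and $\diam\Omega_m$, $|\Omega_m|$ are uniformly bounded above and below, Aleksandrov's maximum principle (Theorem \ref{Alex_thm}) gives $|u_m(x)|^n\le C(n,\Omega)\,\dist(x,\p\Omega_m)$, which together with convexity is a uniform $C^{0,1/n}$ bound. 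By Arzel\`a--Ascoli a further subsequence converges locally uniformly on $\Omega$ to a convex $u\in C(\ov\Omega)$ with $u=0$ on $\p\Omega$; the uniform boundary estimate keeps the minimum points of $u_m$ in a fixed compact subset of $\Omega$, so $\|u\|_{L^\infty(\Omega)}=1$ and $u\not\equiv 0$, and it also prevents Monge-Amp\`ere mass from escaping to $\p\Omega$, so $\lambda[\Omega_m]|u_m|^n\,dx\rightharpoonup\Lambda|u|^n\,dx$ weakly-$\ast$. By the compactness theorem (Theorem \ref{compact_thm}), $u$ solves $\det D^2 u=\Lambda|u|^n$ in $\Omega$ with $u=0$ on $\p\Omega$; uniqueness of the eigenvalue (Proposition \ref{uni_lam}) then forces $\Lambda=\lambda[\Omega]$. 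As this holds for every subsequence, $\lambda[\Omega_m]\to\lambda[\Omega]$.

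\emph{Part (ii) (domain monotonicity).} First assume $\ov{\Omega_1}\subset\Omega_2$. Let $u$ be a convex eigenfunction on $\Omega_1$ with $\|u\|_{L^\infty(\Omega_1)}=1$, extend it by $0$ to $\tilde u\in C(\ov{\Omega_2})$, and let $w$ be the convex envelope of $\tilde u$ on $\Omega_2$, so $-1\le w\le\tilde u\le 0$ and $w$ is convex and nonzero. For $x_0\in\p\Omega_2$, choosing a supporting direction $e$ at $x_0$ and using that $\ov{\Omega_1}$ lies at positive distance from the supporting hyperplane, one checks that the linear function $x\mapsto t\,e\cdot(x-x_0)$ lies below $\tilde u$ on $\Omega_2$ once $t$ is large; hence $w(x_0)=0$, so $w=0$ on $\p\Omega_2$, and squeezing $w$ between such linear functions and $0$ gives $w\in C(\ov{\Omega_2})$, i.e.\ $w$ is admissible on $\Omega_2$. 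Since $w\le\tilde u$ we get $\int_{\Omega_2}(-w)^{n+1}\,dx\ge\int_{\Omega_1}(-u)^{n+1}\,dx$. For the numerator, use the standard fact that $Mw$ is concentrated on the contact set $\{w=\tilde u\}$: on its part in $\Omega_2\setminus\ov{\Omega_1}$ the function $w$ has an interior maximum and is locally constant, so $Mw$ vanishes there; on its part in $\p\Omega_1$ one has $w=0$; and on $\{w=\tilde u\}\cap\Omega_1$ one has $w=u$ and $\p w(x)\subset\p u(x)$, hence $Mw\le Mu$ there. Consequently $\int_{\Omega_2}(-w)\det D^2 w\,dx=\int_{\{w=u\}\cap\Omega_1}(-u)\,dMw\le\int_{\Omega_1}(-u)\det D^2 u\,dx$, so the Rayleigh quotient of $w$ on $\Omega_2$ is at most $\lambda[\Omega_1]$, giving $\lambda[\Omega_2]\le\lambda[\Omega_1]$. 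For general $\Omega_1\subseteq\Omega_2$, fix $x_0\in\Omega_1$ and apply the above to $\Omega_1^\e:=x_0+(1-\e)(\Omega_1-x_0)$, which satisfies $\ov{\Omega_1^\e}\subset\Omega_1\subseteq\Omega_2$, obtaining $\lambda[\Omega_2]\le\lambda[\Omega_1^\e]=(1-\e)^{-2n}\lambda[\Omega_1]$ by part (iii); letting $\e\to 0$ finishes the proof.

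\emph{Expected main obstacle.} The computations are routine; the two points needing care are, in (i), arranging that $\|u\|_{L^\infty}$ is preserved in the limit and that the right-hand side measures converge with no loss of mass at the boundary (both follow from the uniform Aleksandrov estimate), and, in (ii), the convex-envelope step — verifying that $w$ vanishes on $\p\Omega_2$ and that $Mw$ is supported on the contact set, where it is dominated by $Mu$.
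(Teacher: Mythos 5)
Your argument is correct, and parts (i) and (iii) follow essentially the paper's route: (iii) is the same change-of-variables computation (you transform the Rayleigh quotient directly, whereas the paper transforms an eigenfunction and invokes uniqueness of the eigenvalue -- your version is slightly more elementary); (i) is the same compactness-plus-uniqueness scheme, except that you run the Aleksandrov/Arzel\`a--Ascoli/Theorem \ref{compact_thm} argument directly on the general convex domains $\Omega_m$, while the paper first treats smooth uniformly convex approximants (Proposition \ref{Lam_prop}) and then uses a diagonal argument; both hinge on Proposition \ref{uni_lam}. Part (ii) is where you genuinely diverge: the paper sandwiches $\Omega_1\subset\Omega_2$ between smooth uniformly convex domains $\Omega_{1m}\subset\Omega_{2m}$, cites Wang's monotonicity for the smooth eigenvalue, and passes to the limit via (i); you instead give a direct variational proof by extending an eigenfunction of $\Omega_1$ by zero, taking its convex envelope $w$ on $\Omega_2$, and comparing Rayleigh quotients. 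Your construction checks out -- the affine minorants $-t\,e\cdot(x-x_0)$ with $t\ge \dist(\ov{\Omega_1},\p\Omega_2)^{-1}$ do force $w=0$ on $\p\Omega_2$ and give continuity up to the boundary, and $w<0$ in $\Omega_2$ so the contact set actually lies in $\Omega_1$, where $\p w(x)\subset\p u(x)$ gives $Mw\le Mu$. What your route buys is self-containedness: it avoids the external appeal to \cite[Theorem 4.1(iii)]{W} and does not need part (i) at all (only the scaling from (iii) for the final $\e$-shrinking step). The one ingredient you invoke without proof is that the Monge--Amp\`ere measure of a convex envelope does not charge the set $\{w<\tilde u\}$; this is standard (it is the contact-set lemma underlying the Aleksandrov--Bakelman--Pucci estimate, see e.g.\ \cite{Fi} or \cite{G}), but it deserves an explicit citation since it is the load-bearing step of your argument.
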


\begin{proof}
(i) We first consider
the case $\{\Omega_m\}_{m=1}^{\infty}$ is a sequence of open, smooth, bounded and uniformly convex domains. In this case, 
the stability property of the Monge-Amp\`ere eigenvalue, that is,
$$\lim_{m\rightarrow \infty}\lambda[\Omega_m]=\lim_{m\rightarrow \infty}\lambda(\Omega_m)=\lambda[\Omega]$$
is a consequence of Proposition \ref{Lam_prop} and the uniqueness of the Monge-Amp\`ere eigenvalue as proved in Proposition \ref{uni_lam}. 

For the general open bounded convex domains $\Omega_m$, we approximate each $\Omega_m$ by a sequence of smooth, open, bounded and uniformly convex domains $\{\Omega_{km}\}_{k=1}^{\infty}$ that converges to $\Omega_m$ in the Hausdorff distance. Then, as above, 
$$\lim_{k\rightarrow \infty}\lambda(\Omega_{km})=\lambda[\Omega_m].$$
Now, the conclusion of (i) follows from a standard diagonal argument.
\\
(ii) Suppose that $\Omega_1\subset\Omega_2$. Let $\{\Omega_{1m}\}\subset\Omega_1$ be a sequence of open, smooth, bounded and uniformly convex domains that converges to $\Omega_1$ in the Hausdorff distance. 
Let $\{\Omega_{2m}\}\supset\Omega_2$ be a sequence of open, smooth, bounded and uniformly convex domains that converges to $\Omega_2$ in the Hausdorff distance. Then $\Omega_{1m}\subset\Omega_{2m}$ for all $m$. 
By \cite[Theorem 4.1(iii)]{W}, we have $\lambda(\Omega_{2m})\leq \lambda (\Omega_{1m})$ for all $m$. Letting $m\rightarrow\infty$ and using (i), we find
$\lambda[\Omega_2]\leq \lambda[\Omega_1]$.\\
(iii) Let $u\in C(\overline{\Omega})$ with $u=0$ on $\p\Omega$ be a nonzero convex eigenfunction of the Monge-Amp\`ere operator on $\Omega$ as given by Proposition \ref{exi_prop}. Then
 $\det D^2 u= \lambda |u|^n$ in $\Omega$ where $\lambda=\lambda[\Omega]$. Suppose $T:\R^n\rightarrow \R^n$ is an affine transformation on $\R^n$. Then
$$u_T(x):= u(T^{-1}x), x\in T(\Omega)$$
satisfies in $T(\Omega)$
$$D^2 u_T(x) = (T^{-1})^t D^2 u(T^{-1}x) T^{-1}$$
and hence
$$\det D^2 u_T(x) = |\det T|^{-2} (\det D^2 u) (T^{-1}x)= |\det T|^{-2} \lambda |u|^n(T^{-1}x)= \lambda |\det T|^{-2} |u_T(x)|^n.$$
Therefore, from the uniqueness of the Monge-Amp\`ere eigenvalue, we have
$$\lambda [T(\Omega)]= |\det T|^{-2} \lambda [\Omega].$$ 
This proves (iii).

Finally, when applying (iii) to $Tx = tx$ with $\det T= t^n$ ($t>0$), we obtain the homogeneity property with respect to domain dilations of the Monge-Amp\`ere eigenvalue.
\end{proof}

\section{Proofs of Theorems \ref{lamBM} 
and \ref{extreme_thm}}
\label{iso_sec}
In this section, we prove Theorems \ref{lamBM} 
and \ref{extreme_thm}.
\begin{proof}[Proof of Theorem \ref{BMcor}] Let $\Omega_0$ and $\Omega_1$ be open bounded convex domains in $\R^n$. For each $i=0, 1$, let $\{\Omega_{i m}\}_{m=1}^{\infty}$ be a 
sequence of open, bounded, smooth, and uniformly convex domain in $\R^n$ converging to $\Omega_i$ in the Hausdorff 
distance. We recall the Minkowski linear combinations of two convex domains as defined by (\ref{Min_lin}). Then for each $\alpha\in [0, 1]$, the sequence of Minkowski 
linear combinations $\{\Omega_{\alpha m}\}$ of $\Omega_{0m}$ and $\Omega_{1m}$ converges to $\Omega_{\alpha}$ in the Hausdorff distance.
For each $m$, we have by (\ref{lamBM}),
$$ \lambda(\Omega_{\alpha m})^{-\frac{1}{2n}}\geq (1-\alpha) \lambda (\Omega_{0m})^{-\frac{1}{2n}} + \alpha \lambda(\Omega_{1m})^{-\frac{1}{2n}}.$$
Letting $m\rightarrow\infty$ in the above inequality and using the stability with respect to the Hausdorff convergence of the Monge-Amp\`ere eigenvalue in Theorem \ref{lam_thm} (iv), we obtain 
$$ \lambda[\Omega_{\alpha }]^{-\frac{1}{2n}}\geq (1-\alpha) \lambda [\Omega_{0}]^{-\frac{1}{2n}} + \alpha \lambda[\Omega_{1}]^{-\frac{1}{2n}}.$$
The theorem is proved.
\end{proof}
\begin{proof}[Proof of Theorem \ref{extreme_thm}]
(i) Let $\{\Omega_m\}$ be a sequence of open, bounded, smooth, and uniformly convex domains in $\R^n$ converging to $\Omega$ in the Hausdorff distance. For 
each $m$, let $\mathcal{B}(\Omega_m)= B_{r_m}$ be the (open) ball
having the same volume as $\Omega_m$. Then, by Theorem 3.2 in Brandolini-Nitsch-Trombetti \cite{BNT}, we have 
\begin{equation}\lambda (\Omega_m)\leq\lambda (\mathcal{B}(\Omega_m))=\lambda (B_{r_m}).
 \label{bnt_ineq}
\end{equation}
Since $|\Omega_m|= |B_{r_m}|=  r_m^n |B_1|$ and, 
 $\lambda (B_{r_m})= \frac{\lambda (B_1)}{r_m^{2n}}$ by Proposition \ref{lam_nice}, we have
\begin{equation}\lambda (\mathcal{B}(\Omega_m))= \frac{\lambda (B_1)}{r_m^{2n}}= \frac{|B_1|^2 \lambda (B_1)}{|\Omega_m|^2}.
 \label{lam_rm}
\end{equation}
Similarly, if $\mathcal{B}(\Omega)$ is the open ball having the same volume as $\Omega$ then 
\begin{equation}\lambda[\mathcal{B}(\Omega)]=\lambda (\mathcal{B}(\Omega)= \frac{|B_1|^2 \lambda (B_1)}{|\Omega|^2}.
 \label{lam_r}
\end{equation}
Letting $m\rightarrow\infty$ in (\ref{bnt_ineq}), using Theorem \ref{lam_thm} (iv) on the stability of the Monge-Amp\`ere eigenvalue with respect to the Hausdorff convergence of the domains, and 
recalling (\ref{lam_rm}) and (\ref{lam_r}), we obtain (\ref{iso_ineq}), that is,
 $$\lambda[\Omega]\leq\lambda [\mathcal{B}(\Omega)] =\frac{|B_1|^2 \lambda (B_1)}{|\Omega|^2}.$$
(ii) Given a fixed volume $V>0$, let
\begin{equation*}\displaystyle \lambda_{V}=\inf\left\{\lambda[\Omega]: \Omega\subset\R^n~ \text{is open, bounded, convex with } |\Omega|=V\right\}.
\end{equation*}
Let  $\{\Omega_m\}_{m=1}^{\infty}$ be a sequence of open bounded convex sets in $\R^n$ having volume $V$ such that
\begin{equation}\label{min_seq}\lambda[\Omega_m]\rightarrow \lambda_V.
\end{equation}
For each $m$, 
 by John's lemma, Lemma \ref{J_lem}, there exists an affine transformation $T_m$ of $\R^n$ with $\det T_m=1$ such that
 \begin{equation}B_{R_m}\subset T_m(\Omega_m)\subset B_{nR_m}
 \label{OR_eq}
 \end{equation}
 for some $R_m>0$. We find
 \begin{equation}R_m^n \omega_n\leq |\Omega_m|=V\leq n^n \omega_n R_m^n,~\omega_n=|B_1|.
  \label{vol_R}
 \end{equation}
From (\ref{OR_eq}), (\ref{vol_R}) and the Blaschke Selection Theorem (see, for example, \cite[Theorem 1.8.7]{Sch}), there exists a subsequence of $\{T_m(\Omega_m)\}$, still 
denoted by $\{T_m(\Omega_m)\}$, that converges in the Hausdorff distance to an open bounded convex set
$S$ with volume $V$. By the stability of the Monge-Amp\`ere eigenvalue in Theorem \ref{lam_thm}(iv), we have
$$\lambda[T_m(\Omega_m)]\rightarrow \lambda[S].$$
Since $\lambda[T_m(\Omega_m)]=\lambda[\Omega_m]$ by Proposition \ref{lam_nice}, we find from (\ref{min_seq}) that
$\lambda[S]=\lambda_V$. Thus, the open bounded convex set $S$ has the smallest Monge-Amp\`ere eigenvalue among all open bounded convex sets $\Omega$ having the same volume $V$.\\
(iii) The proof is similar to that of (ii) noting that if $\Omega_m$ in (ii) is taken to be centrally symmetric then $T_m(\Omega_m)$ is also centrally symmetric
and the limit $K$ of $T_m(\Omega)$ in the Hausdorff metric is also centrally symmetric. 
\end{proof}

\section{Proof of Theorem \ref{Rlem}}
\label{gl_sec}
\begin{proof}[Proof of Theorem \ref{Rlem}] The proof of the theorem is divided into several steps. We note that, by Proposition \ref{reg_prop}, $u\in C^{\infty}(\Omega)$.

{\it Step 1 ($u\not\in C^{3+[p]}(\overline{\Omega})$ if $p$ is not an integer).} We argue by contradiction that $u\not\in C^{3+[p]}(\overline{\Omega})$ if $p$ is not an integer. Suppose otherwise, then we can write for $x\in\Omega$
 $$|u(x)|=-u(x)= g(x) \dist(x,\p\Omega)$$
 where the function $g$ is positive (due to (\ref{ulow}) and (\ref{uup}) below) and  $C^{2+[p]}$ near the boundary $\p\Omega$. Note that $\dist(\cdot,
 \Omega)$ is smooth near the boundary $\p\Omega$. By using the hypothetical assumption  $u\in C^{3+[p]}(\overline{\Omega})$
 and the equation
 $$\det D^2 u= |u|^p = g^p \dist^p(x,\p\Omega),$$
 we find that the right hand side $ g^p \dist^p(x,\p\Omega)$ is $C^{1+[p]}$ near the boundary $\p\Omega$. This is impossible because the function
 $\dist^p(x,\p\Omega)$ is not $C^{1+[p]}$ near the boundary $\p\Omega$.

 We now return to proving higher order regularity results which follow from revisiting the proof of the global smoothness of
 the Monge-Amp\`ere
 eigenfunctions in \cite[Theorem 1.4]{LS}.
 For reader's convenience, we sketch the arguments here.
 
{\it Step 2 ($u\in C^{2,\beta} (\overline{\Omega})$ for all $\beta < \min \{p, \frac{2}{2+p}\}$).}
Let $M=\|u\|_{L^{\infty}(\Omega)}\in (0,\infty)$.
As in the proof of Lemma \ref{Alem}, we obtain from the convexity of $u$
that
\begin{equation}
 \label{ulow}
 |u(x)|\geq \frac{M \dist(x,\p\Omega)}{\diam \Omega}~\text{for all}~x\in\Omega.
\end{equation}
Now, we prove that
\begin{equation}
 \label{uup}
 |u(x)|\leq C(n, p,\Omega, M) \dist(x,\p\Omega)~\text{for all}~x\in\Omega.
\end{equation}
Let $\rho$ be a $C^2$, strictly convex defining function of $\Omega$, that is 
$\Omega:=\{x\in \R^n: \rho(x)<0\}$, $\rho=0$ on $\p\Omega$ and $D\rho\neq 0$ on $\p\Omega$.
Then $$D^2 \rho\geq \eta I_n~ \text{and} ~\rho\geq -\eta^{-1} ~\text{in}~ \Omega$$ for some $\eta>0$ depending only on $\Omega$. 
Consider the following function
$\tilde u(x) =\mu (e^{\rho}-1).$
From
$$D^2 (e^{\rho}-1)=e^{\rho}(D^2\rho + D\rho \otimes D\rho)\geq e^{-\eta^{-1}}\eta I_n,$$
we find that $\tilde u$  is convex and furthermore, for a sufficiently large $\mu=\mu(n, p, M,\Omega)$, $$\det D^2 \tilde u\geq |u|^p=\det D^2 u~\text{in}~\Omega.$$
Note that $\tilde u= u=0$ on $\p\Omega$. Thus, by the comparison principle, Lemma \ref{comp_prin}, we have $u\geq \tilde u$ in $\Omega$. Hence $|u|\leq |\tilde u|$ and (\ref{uup})
follows. 

The inequalities (\ref{ulow}) and (\ref{uup}) imply that 
if $x_0 \in \p \Omega$ then $c \leq |D u(x_0)| \le C$. As a consequence, using the smoothness and uniform convexity of $\p\Omega$, we find that
on $\p \Omega$ the function $u$ separates quadratically from its tangent plane at each $x_0\in\p\Omega$, that is,
$$\rho|x-x_0|^2\leq u(x)-u(x_0)-Du(x_0)\cdot (x-x_0)\leq \rho^{-1}|x-x_0|^2~\forall x\in\p\Omega$$
for some positive constant $\rho=\rho(n, p, M, \Omega)$. With this, we can follow the proof of Theorem 1.3 in Savin \cite{S} to obtain the global
$C^{2}(\overline{\Omega})$ regularity of $u$. Now, write $|u|= g \dist(\cdot, \p\Omega)$ with $g\in C^{0, 1}(\overline{\Omega})$. Then the conditions
of Theorem 1.2 in \cite{LS} are satisfies and therefore, we can conclude from this theorem that 
that $u\in C^{2,\beta} (\overline{\Omega})$ for all $\beta < \min \{p, \frac{2}{2+p}\}$.

 {\it Step 3 (Transform equation (\ref{pnotn}) into an equivalent equation in the upper half space).} Since $u\in C^{\infty}(\Omega)$, it remains to investigate the smoothness of $u$ near the boundary $\p\Omega$.
Assume that $0\in\partial\Omega$ and 
$e_n=(0, \cdots, 0, 1)\in\R^n$ is the inner normal of $\partial\Omega$ at $0$. We make the rotation of coordinates
$$y_{n}=- x_{n+ 1},~ y_{n+ 1}= x_n,~ y_k= x_k~(1\leq k\leq n-1).$$
In the new coordinates, the graph of $u$ near the origin can be represented as 
$y_{n+1} = \tilde u(y)$ in the upper half-space $\R^{n}_{+}=\{y\in \R^n: y_{n}>0\}$.
Near the origin, the boundary $\p\Omega$ is given by $x_n = \phi(x')$ in the original coordinates. Thus, the boundary condition for $\tilde u$ is 
$\tilde u=\phi$ on $\{y_n=0\}.$ Computing using the Gauss curvature as in the derivation of equation (5.2) in \cite{LS}, we find that, locally, for some small $r_0>0$
(now relabeling $y$ by $x$), $\tilde u_n >c$ in $B_{r_0}^{+}$, and 
\begin{equation}\label{ET}
\left\{\begin{array}{rl}
\det D^2 \tilde u  &=  x_n^p \, \, \tilde u_n^{n+2} \quad \mbox{in}~ B_{r_0}^{+},\\
\tilde u&= \phi \quad \quad \quad \quad \mbox{on}~\{x_n=0\}\cap B_{r_0}.
\end{array}\right.
\end{equation}
In (\ref{ET}) and in what follows, we use the notation
$$B_r^{+}:=B_{r}(0)\cap \{x=(x_1,\cdots, x_n)\in\R^n: x_n>0\}.$$
Since we have already prove $u \in C^{2,\beta}(\overline {\Omega})$, we have $\tilde u \in C^{2,\beta}(\overline {B_{r_0}^+}) $ for some small $\beta>0$, 
and $\tilde u_n >c$. It remains to study the higher order regularity of solutions to 
\eqref{ET} in a neighborhood of the origin.  
For simplicity of notation, we relabel $\tilde u$ from (\ref{ET}) by $u$.

Next we perform the following partial
Legendre transformation to the solutions of \eqref{ET}:
\begin{equation}\label{leg}
y_i= u_i(x) \quad (i\le n-1),\quad  y_n = x_n,\qquad u^*(y)=x' \cdot \nabla_{x'}u -u(x).
\end{equation}
The function $u^*$ is obtained by taking the Legendre transform of
$u$ on each slice $x_n=\text{constant}.$ 
Note that $(u^*)^* =u.$
As in \cite[equation (5.4)]{LS}, we find that if $u$ satisfies (\ref{ET}) then $u^*$ (which is convex
in $y'$ and concave in $y_n$) satisfies
\begin{equation}\label{eqn-u*}
\left\{\begin{array}{rl}
y_n^{\alpha} (-u^*_n)^{n+ 2} \det D^2_{y'}u^* + u^*_{nn}  &= 0~\mbox{in}~ B_\delta^{+}\\
u^*&= \phi^*~~~~\mbox{on}~\{y_n=0\}\cap B_\delta,
\end{array}\right.
\end{equation}
where $\alpha=p$. Moreover $u^* \in C^{2,\beta}(\ov B_\delta^+)$, $-u_n^*>c$ and $\phi^* \in C^\infty$.

 {\it Step 4 (Regularity of $u^*$ in the non-degenerate directions $y'$).} In order to obtain the smoothness of $u^*$ from \eqref{eqn-u*} we establish Schauder estimates for its linearized equation. We consider linear equations of the form
\begin{equation}x_n^{\alpha} \sum_{i, j\leq n-1}a^{ij} v_{ij} + v_{nn}= x_n^{\alpha} f(x)
 \label{veq}
\end{equation}
with $a^{ij}$ uniformly elliptic,
that is,
$$\lambda |\xi|^2\leq \sum_{i, j=1}^{n-1} a^{ij}(x)\xi_i\xi_j \leq \Lambda |\xi|^2~\text{for some positive constants}~\lambda~\text{and}~\Lambda~\text{and for all}~\xi\in\R^{n-1}.$$
\begin{defn}\label{dal}
We define the distance $d_\alpha$ between 2 points $y$ and $z$ in the upper half-space by
$$d_\alpha(y,z):=|y'-z'| +\left|y_n^\frac{2+\alpha}{2}-  z_n^\frac{2+\alpha}{2}\right|.$$
\end{defn}
The relation between $d_\alpha$ and the Euclidean distance in $\overline{B}_1^+$ is as follows:
\begin{equation}\label{51}
c|y-z|^\frac{2+\alpha}{2} \le d_\alpha(y,z) \le C |y-z| ,
\end{equation}
$$ d_\alpha(y,z) \sim |y-z|  \quad \mbox{if} \quad y,z \in \overline{B}_1^+\cap \{x_n \ge 1/4  \}.$$

If function $w$ is $C^\gamma$ with respect to $d_\alpha$ (with $\gamma \in (0, \frac{2}{2+\alpha})$) we write
$w \in C_\alpha^\gamma(\ov B_1^+).$ 
In view of \eqref{51} we obtain the following relations between the $C^\gamma_\alpha$ spaces and the standard $C^\gamma=C^{0,\gamma}$ H\"older spaces: 
\begin{align*}
 w \in C^\gamma_\alpha(\ov B_1^+)  \quad & \Rightarrow \quad w \in C^\gamma (\ov B_1^+) \\
 w \in C^\beta (\ov B_1^+)  \quad &  \Rightarrow \quad w \in C^\gamma_\alpha(\ov B_1^+) \quad \mbox{with} \quad \gamma=\beta \, \,  \frac{2}{2+\alpha}.
\end{align*}
We have the following Schauder estimates; see \cite[Proposition 5.3]{LS}.
\begin{prop}
 \label{LMA-model1}
 Assume that $v$ solves \eqref{veq} in $\ov B_\delta^+$ and
$v= \varphi(x')~\text{on}~ \{x_n=0\}\cap \ov B_\delta^+.$
If $a^{ij}$, $f \in C_\alpha^\gamma(\ov B_\delta^+)$ with $\frac{\gamma}{2} \le \frac{\min\{1,\alpha\}}{2+\alpha}$, and $\varphi \in C^{2,\gamma}$, then
$D v, D^2v \in C^\gamma_\alpha(\ov B_{\delta/2}^+).$
\end{prop}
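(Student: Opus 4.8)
The plan is to prove the estimate by a Campanato iteration that reduces the variable-coefficient problem to a Liouville theorem for a constant-coefficient model operator, the degeneracy being absorbed into the anisotropic distance $d_\alpha$ of Definition \ref{dal}. A clean preliminary reformulation is the change of the normal variable $s=\tfrac{2}{2+\alpha}\,x_n^{(2+\alpha)/2}$: under it $d_\alpha$ becomes comparable to the Euclidean distance in the variables $(x',s)$, so that $C^\gamma_\alpha$ becomes the ordinary Hölder space $C^{0,\gamma}$ in those variables, while \eqref{veq} turns into
\begin{equation*}
\sum_{i,j\le n-1}a^{ij}v_{ij}+v_{ss}+\frac{a}{s}\,v_s=f,\qquad a:=\frac{\alpha}{2+\alpha}\in(0,1),
\end{equation*}
a uniformly elliptic operator in the $x'$-directions coupled with the Bessel-type operator $\partial_{ss}+\tfrac{a}{s}\partial_s$; in divergence form this is $s^{-a}\operatorname{div}(s^aA\nabla v)$, with $s^a$ a Muckenhoupt $A_2$ weight. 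One keeps in mind that the $x_n$-derivatives appearing in the conclusion correspond, in the new variable, to the weighted derivatives $x_n^{\alpha/2}v_s$, etc., which is what makes them bounded even though $v_s$ alone need not be. Thus it suffices to prove interior and boundary $C^{2,\gamma}$ estimates, with the conormal-type boundary behaviour on $\{s=0\}$ inherited from the $C^{2,\beta}$ regularity established in Step 2, for this weighted non-divergence class.

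The model operator is obtained by freezing the coefficients $a^{ij}$ at a boundary point and linearly diagonalizing the constant $x'$-block, giving $\mathcal L_0=\Delta_{x'}+\partial_{ss}+\tfrac{a}{s}\partial_s=s^{-a}\operatorname{div}(s^a\nabla\,\cdot\,)$, the Caffarelli-Silvestre extension operator. The core of the argument is a Liouville-type classification: every solution $w$ of $\mathcal L_0w=0$ in $\R^n_+$ satisfying the relevant boundary condition on $\{s=0\}$ and growing at most like $|x|^{2+\gamma}$ must be a polynomial of $d_\alpha$-degree $\le 2$, i.e. a quadratic in $x'$ whose coefficients depend on $s$ only through the explicit smooth solutions of the one-dimensional equations $g''+\tfrac{a}{s}g'=\mathrm{const}$. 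One proves this by even reflection across $\{s=0\}$ (after subtracting off the boundary correction, so that the reflected function is $|s|^a$-harmonic on a full ball), the Liouville theorem for $|s|^a$-harmonic functions of polynomial growth, and separation of variables in $s$: the non-smooth homogeneous solutions of $\mathcal L_0$, those carrying an $s^{1-a}$ factor or higher, have homogeneity strictly larger than $2+\gamma$ precisely because of the hypothesis $\tfrac{\gamma}{2}\le\tfrac{\min\{1,\alpha\}}{2+\alpha}$ (equivalently $\gamma<\tfrac{2}{2+\alpha}$), hence are ruled out by the growth bound.

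From the Liouville theorem a standard compactness/contradiction argument yields the approximation lemma: there are $\theta\in(0,1)$ and $C$ such that whenever $\mathcal L_0v=g$ in $B_1^+$ there is a model polynomial $P$ of $d_\alpha$-degree $\le 2$ with $\|v-P\|_{L^\infty(B_\theta^+)}\le\tfrac12\,\theta^{2+\gamma}\big(\|v\|_{L^\infty(B_1^+)}+\|g\|_{L^\infty(B_1^+)}\big)$ and $\|P\|\le C\big(\|v\|_{L^\infty(B_1^+)}+\|g\|_{L^\infty(B_1^+)}\big)$. Reinstating the variable coefficients, the errors $\|a^{ij}-\bar a^{ij}\|_{C^\gamma_\alpha}$ and $\|f\|_{C^\gamma_\alpha}$ enter as perturbations whose natural scaling exponents stay strictly above $2+\gamma$, again exactly because $\tfrac{\gamma}{2}\le\tfrac{\min\{1,\alpha\}}{2+\alpha}$; iterating the approximation lemma on the $d_\alpha$-balls centered at a point $x_0\in\ov B^+_{\delta/2}$, rescaled by the anisotropic dilation $x'\mapsto rx'$, $x_n\mapsto r^{2/(2+\alpha)}x_n$, and summing the resulting geometric series produces a quadratic polynomial $P_{x_0}$ with $|v-P_{x_0}|\le C\,d_\alpha(\cdot,x_0)^{2+\gamma}$ and with coefficients that are themselves $C^\gamma_\alpha$ in $x_0$. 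By the Campanato characterization of $C^{2,\gamma}_\alpha$ this is exactly the assertion $Dv,D^2v\in C^\gamma_\alpha(\ov B_{\delta/2}^+)$; the Dirichlet datum enters only through the requirement that the $\{x_n=0\}$-trace of $P_{x_0}$ agree with the second Taylor polynomial of $\varphi$ at $x_0$, which is harmless since $\varphi\in C^{2,\gamma}$.

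The main obstacle is the Liouville classification for the degenerate model operator together with the precompactness needed for the approximation lemma: one must simultaneously exclude the non-smooth homogeneous solutions and pass to the limit in the rescalings despite the weight $s^a$ collapsing on $\{s=0\}$. The cleanest remedy is to feed in the De Giorgi-Nash-Moser theory for $A_2$-weighted equations (Fabes-Kenig-Serapioni) to obtain a uniform $C^{0,\gamma_0}$ bound for the rescaled functions, hence Arzelà-Ascoli compactness; the remaining ingredients — freezing the coefficients, the Campanato characterization, and summing the geometric series — are routine.
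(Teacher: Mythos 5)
The paper does not actually prove this proposition: it is quoted verbatim from Le--Savin \cite[Proposition 5.3]{LS}, so the only comparison available is with that reference. Your overall architecture --- pass to the variable $s=\tfrac{2}{2+\alpha}x_n^{(2+\alpha)/2}$ so that $d_\alpha$ becomes Euclidean and \eqref{veq} becomes $\sum a^{ij}v_{ij}+v_{ss}+\tfrac{a}{s}v_s=f$ with $a=\tfrac{\alpha}{2+\alpha}$, then run a perturbative Campanato iteration off the frozen-coefficient model --- is the right family of argument and is close in spirit to \cite{LS}, which however works directly in the $x$-variables with explicitly constructed approximating ``polynomials'' (spanned by $1,x_i,x_ix_j,x_n,x_ix_n,x_n^{2+\alpha}$), barriers and the maximum principle, combined with interior Euclidean Schauder estimates on balls of radius comparable to $\dist(x,\{x_n=0\})$.

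As written, though, your sketch has three concrete problems. First, the reflection is wrong: after subtracting the boundary datum you have a \emph{Dirichlet} condition $w=0$ on $\{s=0\}$, for which the solution-preserving extension of $\mathrm{div}(|s|^a\nabla\,\cdot)$ is the \emph{odd} reflection (even reflection of, say, $s^{1-a}$ gives $|s|^{1-a}$, which produces a Dirac mass on $\{s=0\}$). Second, your justification of the Liouville step is off: the ``non-smooth'' homogeneous solutions carrying $s^{1-a}\sim x_n$ do \emph{not} all have $d_\alpha$-homogeneity $>2+\gamma$; e.g.\ $x_ix_n=c\,x_is^{1-a}$ is a solution of the model equation vanishing on $\{s=0\}$ of $d_\alpha$-degree $2-a<2$, so it must be \emph{included} in the approximating class rather than excluded. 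The hypothesis $\tfrac{\gamma}{2}\le\tfrac{\min\{1,\alpha\}}{2+\alpha}$ is not primarily about excluding homogeneous solutions; its role is to guarantee that $x_n^\alpha\in C^\gamma_\alpha$, hence that the second derivatives of the approximating class (e.g.\ $\partial_{nn}x_n^{2+\alpha}=c\,x_n^\alpha$) and the coefficient-freezing error $|a^{ij}(x)-a^{ij}(x_0)|\,x_n^\alpha|D^2_{x'}P|$ lie in $C^\gamma_\alpha$ with the right decay --- a point your argument never verifies. Third, the compactness input is not available as claimed: Fabes--Kenig--Serapioni applies to divergence-form $A_2$-weighted equations, but for merely $C^\gamma_\alpha$ coefficients $a^{ij}$ the operator $s^{-a}\mathrm{div}(s^aA\nabla\,\cdot)$ differs from $a^{ij}\partial_{ij}+\partial_{ss}+\tfrac{a}{s}\partial_s$ by first-order terms $(\partial_ia^{ij})\partial_j$ that do not exist; the equation is genuinely non-divergence, so the uniform H\"older bound for the rescaled solutions needs a separate a priori estimate (barriers plus interior rescaled Schauder, as in \cite{LS}). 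Each of these is repairable, but together they leave the proof incomplete at its decisive steps.
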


By repeatedly differentiating (\ref{veq}) in the $x'$ direction we easily obtain Schauder estimates for higher derivatives. Below $m=(m_1,..,m_{n-1})$ denotes a multi-index with $m_i$ nonnegative integers.
\begin{cor}\label{c0}
If in the proposition above $\varphi \in C^{k+2,\gamma}$ for some integer $k \ge 0$ and
$$D^m_{x'}a^{ij}, D^m_{x'}f \in C^\gamma_\alpha (\ov B_\delta^+) \quad \forall \, \, m \quad \mbox{with} \quad |m| \le k,$$
then
$$D D^m_{x'} v, D^2 D^m_{x'}v \in C^\gamma_\alpha(\ov B_{\delta/2}^+) \quad   \quad \forall \, \,m \quad \mbox{with} \quad   |m| \le k.$$
\end{cor}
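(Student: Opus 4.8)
The plan is to prove the corollary by induction on $|m|$, differentiating equation \eqref{veq} repeatedly in the tangential directions $x_1,\dots,x_{n-1}$ and invoking Proposition \ref{LMA-model1} at each step; the case $k=0$ is precisely that proposition. Since only finitely many applications are needed, I would fix once and for all a decreasing chain of radii $\delta=\delta_0>\delta_1>\dots>\delta_{k+1}=\delta/2$ so that each step can afford the small loss of domain incurred by Proposition \ref{LMA-model1}. Note that the structural constants $\lambda,\Lambda$ and the exponents $\gamma,\alpha$ never change under tangential differentiation, so the standing restriction $\frac{\gamma}{2}\le\frac{\min\{1,\alpha\}}{2+\alpha}$ is preserved throughout.

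The key point is that the weight $x_n^{\alpha}$ and the operator $\partial_{nn}$ are untouched by differentiation in $x'$. Thus if $w=D_{x_l}v$ with $l\le n-1$, then differentiating \eqref{veq} gives
\[
x_n^{\alpha}\sum_{i,j\le n-1}a^{ij}w_{ij}+w_{nn}=x_n^{\alpha}\Big(D_{x_l}f-\sum_{i,j\le n-1}(D_{x_l}a^{ij})\,v_{ij}\Big),\qquad w=D_{x_l}\varphi \ \text{on}\ \{x_n=0\},
\]
which has the same form \eqref{veq} with the same uniformly elliptic coefficients $a^{ij}$. Because $C^\gamma_\alpha(\ov B_{\delta_0}^+)$ is closed under multiplication (a bounded $C^\gamma_\alpha$ function times a $C^\gamma_\alpha$ function is again $C^\gamma_\alpha$, by the usual two-term estimate), and because $D_{x_l}f, D_{x_l}a^{ij}\in C^\gamma_\alpha$ by hypothesis while $v_{ij}\in C^\gamma_\alpha$ from the base step, the new right-hand side lies in $C^\gamma_\alpha(\ov B_{\delta_0}^+)$; moreover $D_{x_l}\varphi\in C^{k+1,\gamma}\subset C^{2,\gamma}$. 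Proposition \ref{LMA-model1} then yields $Dw,D^2w\in C^\gamma_\alpha(\ov B_{\delta_1}^+)$, establishing the claim for $|m|=1$.

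For the inductive step, assume $D D^{m'}_{x'}v,\ D^2 D^{m'}_{x'}v\in C^\gamma_\alpha(\ov B_{\delta_j}^+)$ for all $|m'|\le j<k$, and let $|m|=j+1\le k$, say $D^m_{x'}=D_{x_l}D^{m'}_{x'}$ with $|m'|=j$. Differentiating the equation satisfied by $D^{m'}_{x'}v$ once more in $x_l$ and applying the Leibniz rule produces an equation of the form \eqref{veq} for $D^m_{x'}v$, whose right-hand side is a finite sum of products of factors drawn from $\{D^{\mu}_{x'}a^{ij}:|\mu|\le j+1\}$, $\{D^{\mu}_{x'}f:|\mu|\le j+1\}$ and $\{D^2 D^{\mu}_{x'}v:|\mu|\le j\}$ — all of which lie in $C^\gamma_\alpha(\ov B_{\delta_j}^+)$, the first two by the hypotheses on the coefficients and data (valid since $j+1\le k$) and the last by the induction hypothesis. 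Hence this right-hand side is in $C^\gamma_\alpha(\ov B_{\delta_j}^+)$, and the boundary data $D^m_{x'}\varphi\in C^{2,\gamma}$ since $\varphi\in C^{k+2,\gamma}$ and $|m|\le k$. Applying Proposition \ref{LMA-model1} on $\ov B_{\delta_j}^+$ gives $D D^m_{x'}v,\ D^2 D^m_{x'}v\in C^\gamma_\alpha(\ov B_{\delta_{j+1}}^+)$, closing the induction; after at most $k+1$ steps one reaches $\ov B_{\delta_{k+1}}^+=\ov B_{\delta/2}^+$.

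I expect the main obstacle to be the rigorous justification of differentiating \eqref{veq} in $x'$, which I would handle in the standard way via tangential difference quotients $\Delta^h_l v=\big(v(\cdot+he_l)-v(\cdot)\big)/h$: these satisfy \eqref{veq} with the appropriate difference-quotient right-hand side and boundary data, so Proposition \ref{LMA-model1} (and its already-proven lower-order versions) yields $C^\gamma_\alpha$ bounds on $D\Delta^h_l v$ and $D^2\Delta^h_l v$ uniform in $h$, and passing $h\to 0$ identifies the limit with $D_{x_l}v$ and legitimizes all the computations above. The accompanying bookkeeping — verifying that the Leibniz expansion of the right-hand side at stage $m$ never involves a tangential derivative of $v$ of order exceeding $2+|m'|$, so that every term is already controlled by the preceding induction step — is routine but is the only place where care is genuinely required; the rest is a direct finite iteration of Proposition \ref{LMA-model1}.
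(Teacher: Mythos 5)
Your proposal is correct and follows essentially the same route as the paper, which dispatches this corollary with the single remark that one obtains it ``by repeatedly differentiating (\ref{veq}) in the $x'$ direction''; your induction on $|m|$, the observation that the weight $x_n^{\alpha}$ and the $\partial_{nn}$ term are unaffected by tangential differentiation, the Leibniz/algebra-property bookkeeping for the new right-hand side, and the difference-quotient justification are exactly the standard details being suppressed. No gaps.
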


As mentioned right after equation (\ref{ET}), it suffices to study higher order regularity in a neighborhood of the origin for the function $u^*$ satisfying \eqref{eqn-u*}
with $\alpha=p$, $u^* \in C^{2,\beta}(\ov B_\delta^+)$, $-u_n^*>c$ and $\phi^* \in C^\infty$.

Fix $k<n$. Then 
$v:= u^*_k=\frac{\p u^*}{\p y_k}$ solves the linearized equation
\begin{equation}
 \label{LMA_uk}
y_n^{\alpha} \sum_{i, j\leq n-1}a^{ij}  v_{ij} + v_{nn}= y_n^{\alpha} f(y)\quad\mbox{in}\quad B_\delta^+
\end{equation}
where
$$a^{ij} =(-u^*_n)^{n+ 2} U_{y'}^{\ast ij}, \quad \quad \quad f(y) = (n+2)(-u_n^{\ast})^{n+1} u^*_{nk}\det D^2_{y'}u^*.$$
and $U^*_{y'}$ denotes the cofactor matrix of $D^2_{y'}u^*$.

Since $u^* \in C^{2,\beta}(\ov B_\delta^+)$ we obtain $Du^*$, $D^2u^* \in C^\gamma_\alpha(\ov B_\delta^+)$ for some small $\gamma>0$, hence $a^{ij}$, $f \in C^\gamma_\alpha(\ov B_\delta^+)$.

By Proposition \ref{LMA-model1},
$D^2v \in C^\gamma_\alpha$ up to the boundary in $\ov B_{\delta/2}^+ $ which in turn implies $D_{y'}a^{ij}$, $D_{y'}f \in C^\gamma_\alpha (\ov B_{\delta/2}^+)$. Now 
we may apply Corollary \ref{c0} and iterate this argument to obtain that  $D^m_{y'}D^l_{y_n}u^*$ with $l\in \{0,1,2\}$ are continuous up to the boundary 
in $\ov B_{\delta/2}^+ $ for all multi-indices $m \ge 0$. 

 {\it Step 5 (Regularity of $u^*$ in the degenerate directions $y_n$ and completion of the proof).}
In order to obtain the continuity of these derivatives for larger values of $l$ we differentiate the 
equation (\ref{eqn-u*}) with respect to $y_n$.
\begin{myindentpar}{1cm}
{\it Case 1: $\alpha=p$ is a positive integer.} Then we can repeatedly differentiate (\ref{eqn-u*}) with respect to $y_n$ and each derivative $D^m_{y'}D^l_{y_n}u^*$ with $l \ge 3$ can be expressed as a polynomial 
involving nonnegative powers of $y_n$ and derivatives $D_{y'}^q D_{y_n}^su^*$ with $s< l$. Thus $u^* \in C^\infty (\ov B_{\delta/2}^+)$ by an induction argument. It follows that 
$u\in C^{\infty}(\overline{\Omega})$ as desired. \\
{\it Case 2: $\alpha=p$ is not a positive integer.} Then we can differentiate (\ref{eqn-u*}) with respect to $y_n$ $[p]$ times and each derivative $D^m_{y'}D^l_{y_n}u^*$ with $ 3\leq l\leq [p]+2$ can be expressed as a polynomial 
involving nonnegative powers of $y_n$ and derivatives $D_{y'}^q D_{y_n}^su^*$ with $s< l$, thus $u^* \in C^{2+[p],\gamma} (\ov B_{\delta/2}^+)$ by an induction argument. It follows
that $u\in C^{2+[p],\beta}(\overline{\Omega})$ as desired. 
\end{myindentpar}
 \end{proof}

\end{document}